\documentclass[a4paper]{article}

\usepackage{hyperref}
\usepackage{amsmath,amsthm,amssymb}
\usepackage{listings}
\usepackage{graphicx}
\usepackage{epstopdf}

\usepackage{algorithm,algpseudocodex}

\usepackage[english]{babel}
\usepackage{authblk}

\DeclareMathOperator{\sgn}{sgn}

\newcommand{\lx}{\boldsymbol\lambda_x}
\newcommand{\ly}{\boldsymbol\lambda_y}
\newcommand{\LL}{\boldsymbol\Lambda}
\newcommand{\Lx}{\boldsymbol\Lambda_x}
\newcommand{\Ly}{\boldsymbol\Lambda_y}
\newcommand{\Lxy}{\boldsymbol\Lambda_{xy}}
\newcommand{\lj}{\boldsymbol\lambda_j}
\newcommand{\Lj}{\boldsymbol\Lambda_j}
\newcommand{\C}{\mathbf C}
\newcommand{\CC}{\mathbb C}
\newcommand{\D}{\mathbf D}
\newcommand{\Dx}{\mathbf D_x}
\newcommand{\Dxj}{\D_{x_jx_j}}
\newcommand{\Dxx}{\mathbf D_{xx}}
\newcommand{\Dyy}{\mathbf D_{yy}}
\newcommand{\diag}{\operatorname{diag}}

\newcommand{\I}{\mathbf I}
\newcommand{\Jn}{\mathbf 1_{N_1\times\ldots\times N_n}}
\newcommand{\Jx}{\mathbf 1_{N_x}}
\newcommand{\Jy}{\mathbf 1_{N_y}}
\newcommand{\Jxy}{\mathbf 1_{N_x\times N_y}}

\newcommand{\PP}{\mathbf P}
\newcommand{\Px}{\mathbf P_x}
\newcommand{\Py}{\mathbf P_y}
\newcommand{\R}{\mathbb R}
\newcommand{\U}{\mathbf U}
\newcommand{\tilU}{\widetilde\U}
\newcommand{\x}{\mathbf x}
\newcommand{\vx}{\vec x}
\newcommand{\vr}{\vec r}
\newcommand{\xxii}{\boldsymbol{\xi}}
\newcommand{\V}{\mathbf V}
\newcommand{\tilV}{\widetilde\V}

\newcommand{\y}{\mathbf y}
\newcommand{\vy}{\vec y}

\newtheorem{theorem}{Theorem}[section]

\newtheorem{lemma}[theorem]{Lemma}
\newtheorem{corollary}[theorem]{Corollary}
\theoremstyle{remark}
\newtheorem*{remark}{Remark}

\author[1,2]{Lo\"ic Constantin}

\author[1]{Carlota M. Cuesta}

\author[1]{Francisco de la Hoz\thanks{Corresponding author: Francisco de la Hoz, francisco.delahoz@ehu.eus}}

\affil[1]{Department of Mathematics, University of the Basque Country, Barrio Sarriena S/N, 48940 Leioa, Spain}
\affil[2]{Universit\'e de Pau et des Pays de l'Adour (UPPA), Av. de l'Universit\'e, 64000 Pau, France}

\title{A matrix-based spectral method for the numerical approximation of the fractional Laplacian and the fractional $p$-Laplacian of functions defined on $\R^n$}

\begin{document}

	\maketitle
	
	\begin{abstract}
Given a function $u$ defined on $\R^n$, its fractional $p$-Laplacian is given by
\begin{equation*}
(-\Delta)_p^s u(\vx) = C_1(n,s,p)\int_{\R^n}\frac{|u(\vx)-u(\vy)|^{p-2}(u(\vx)-u(\vy))}{\|\vx-\vy\|_2^{n+sp}}d\vy,\quad \vx\in\R^n,
\end{equation*}
where the integral is understood in the principal value sense, $p \in (1,\infty)$, $s \in (0,1)$, and $C_1(n,s,p)$ is a normalization constant depending on $n$, $s$, and $p$. A formally equivalent nonlinear Balakrishnan formulation is given by
\begin{equation*}
(-\Delta)_p^s u(\vx) = C_4(n,s,p)\int_0^\infty\Delta(t - \Delta)^{-1}\left[\Phi_p(u(\vx) - u(\cdot))\right](\vx)\frac{dt}{t^{1-sp/2}},
\end{equation*}
where $C_4(n,s,p)$ is another normalization constant, and $\Phi_p(t) = |t|^{p-2}t$.

In this paper, we present a matrix-based spectral method for the numerical approximation of both the fractional Laplacian (i.e., the linear case, when $p = 2$) and the fractional $p$-Laplacian for functions defined on $\R^n$. Our approach builds on the Balakrishnan representation, where we discretize the second-order derivatives in $\Delta$ using spectrally accurate differentiation matrices. A key advantage is that these matrices can be diagonalized in a well-conditioned manner, enabling a stable and robust numerical scheme that naturally extends to arbitrary spatial dimensions $n$. In particular, this diagonalization allows the fractional operator to act directly on the eigenvalue spectrum, effectively reducing the Balakrishnan integral to an analytical evaluation at the spectral level and thereby avoiding costly multidimensional quadrature. The resulting method also avoids domain truncation and variational formulations, making it both computationally efficient and conceptually straightforward.

As a practical application, we simulate numerically the evolution of
\begin{equation*}
\hspace{22em}
\frac{\partial u}{\partial t} + (-\Delta)^s_p u = 0,
\end{equation*}
in one and two spatial dimensions, being able to capture the self-similar solutions that arise as $t\to\infty$.

\noindent\textbf{Keywords:} fractional Laplacian, fractional p‑Laplacian, pseudospectral methods, unbounded domains, differentiation matrices, nonlinear operators		

\end{abstract}

\maketitle

\section{Introduction}

The aim of this paper is to develop a pseudospectral method to approximate numerically the fractional Laplacian and the fractional $p$-Laplacian of $u$ on $\R^n$. The latter operator is a generalization of the former, and is defined (see, e.g., \cite{teso2021}) by
\begin{equation}
\label{e:Dpa}
(-\Delta)_p^s u(\vx) = C_1(n,s,p)\int_{\R^n}\frac{|u(\vx)-u(\vy)|^{p-2}(u(\vx)-u(\vy))}{\|\vx-\vy\|_2^{n+sp}}d\vy,\quad \vx\in\R^n,
\end{equation}
where $p\in(1,\infty)$, $s\in(0,1)$, with
\begin{equation*}
C_1(n,s,p) = \frac{sp(1-s)2^{2s-2}}{\pi^{(n-1)/2}}\frac{\Gamma((n+sp)/2)}{\Gamma((p+1)/2)\Gamma(2-s)} > 0,
\end{equation*}
where $\Gamma(\cdot)$ is Euler's Gamma function:
\begin{equation*}
\Gamma(z) = \int_0^\infty y^{z-1}e^{-y}dy.
\end{equation*}
Note that \eqref{e:Dpa} and all the other integrals appearing in this paper must be understood in the principal value sense, whenever applicable.
When $p = 2$, the fractional $p$-Laplacian is linear, and its definition coincides with that of the integral definition of the fractional Laplacian (see, e.g., \cite{kwasnicki}). Indeed, denoting $s = \alpha/2$, with $\alpha\in(0,2)$, \eqref{e:Dpa} becomes
\begin{align*}
	(-\Delta)_2^{\alpha/2}u(\vx) & = \frac{\alpha2^{\alpha-1}}{\pi^{n/2}}\frac{\Gamma((n+\alpha)/2)}{\Gamma(1-\alpha/2)}\int_{\R^n}\frac{u(\vx)-u(\vy)}{\|\vx-\vy\|_2^{n+\alpha}}d\vy 
	\cr
	& = \frac{\alpha2^{\alpha-1}}{\pi^{n/2}}\frac{\Gamma((n+\alpha)/2)}{\Gamma(1-\alpha/2)}\int_{\R^n}\frac{u(\vx)-u(\vx + \vy)}{\|\vy\|_2^{n+\alpha}}d\vy = (-\Delta)^{\alpha/2}u(\vx).
\end{align*}
In particular, when $p = 2$, the nonlinear factor in the integrand in \eqref{e:Dpa} equals one, which allows rewriting the numerator as an integral.

For example, in the one-dimensional case ($n = 1$):
\begin{equation}
\label{e:D2a}
(-\Delta)^{\alpha/2}u(x) = c_\alpha\int_{\R}\frac{u(x)-u(y)}{|x-y|^{1+\alpha}}dy = c_\alpha\int_{\R}\frac{u(x)-u(x+y)}{|y|^{1+\alpha}}dy,
\end{equation}
where
\begin{equation*}
c_\alpha = \frac{\alpha2^{\alpha-1}}{\pi^{1/2}}\frac{\Gamma((1+\alpha)/2)}{\Gamma(1-\alpha/2)}.
\end{equation*}
The formulation allows deriving other expressions for $(-\Delta)^{\alpha/2}u(x)$. Indeed, in \cite[Lemma 2.1]{cayamacuestadelahoz2021} the following result was proved:
\begin{lemma}
	Consider the twice continuous bounded function $u\in\mathcal C_b^2(\R)$, and let $\alpha\in(0,2)$. If $\alpha\in(0,1)$, assume additionally that $\lim_{x\to\pm \infty}u_x(x)=0$. Then, \eqref{e:D2a} can be expressed as
	\begin{equation}
		\label{e:D2aalternative}
		(-\Delta)^{\alpha/2}u(x) = \left\{
		\begin{aligned}
			& \frac{1}{\pi}\int_{-\infty}^\infty \frac{u_{x}(y)}{x-y}dy, & \alpha = 1, \\
			& \frac{c_{\alpha}}{\alpha(1-\alpha)}\int_{-\infty}^\infty \frac{u_{xx}(y)}{|x-y|^{\alpha - 1}}dy, & \alpha \neq 1.
		\end{aligned}
		\right.
	\end{equation}
\end{lemma}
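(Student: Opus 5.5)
The idea is to integrate by parts in the second form of \eqref{e:D2a}, moving derivatives off the singular kernel and onto $u$. Boundary terms at the origin are controlled by Taylor expansion of $u$, and those at infinity by boundedness of $u$ (and decay of $u_x$ when $\alpha<1$).

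\textbf{Symmetrization.} First rewrite the principal value integral over $y\in(0,\infty)$ as
\begin{equation*}
(-\Delta)^{\alpha/2}u(x) = -c_\alpha\lim_{\epsilon\to0^+}\int_\epsilon^\infty \frac{u(x+y)+u(x-y)-2u(x)}{y^{1+\alpha}}dy .
\end{equation*}
Write $g(y)=u(x+y)+u(x-y)-2u(x)$. Then $g(0)=0$, $g'(0)=0$, $|g(y)|\le \|u_{xx}\|_\infty y^2$ and $|g'(y)|\le 2\|u_{xx}\|_\infty y$. Hence the integral converges absolutely near $0$ for all $\alpha\in(0,2)$, and near infinity because $u$ is bounded.

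\textbf{First integration by parts.} Use $y^{-1-\alpha}=\frac{d}{dy}\left(-\frac{y^{-\alpha}}{\alpha}\right)$. The boundary term at $\epsilon$ is $O(\epsilon^{2-\alpha})\to0$, and the one at infinity vanishes since $g$ is bounded. This gives
\begin{equation*}
(-\Delta)^{\alpha/2}u(x) = -\frac{c_\alpha}{\alpha}\int_0^\infty \frac{u_x(x+y)-u_x(x-y)}{y^{\alpha}}dy .
\end{equation*}
When $\alpha=1$, we have $c_1=1/\pi$. Undoing the symmetrization, the right-hand side is $-\frac1\pi\,\mathrm{p.v.}\int_{\R}\frac{u_x(x+y)}{y}dy$, and the substitution $y\mapsto y-x$ yields $\frac1\pi\int\frac{u_x(y)}{x-y}dy$. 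Convergence at infinity is only conditional here. I would handle it by noting that $\int_1^R \frac{u_x(x+y)-u_x(x-y)}{y}dy$ can itself be integrated by parts, turning it into $[g/y]_1^R+\int_1^R g/y^2\,dy$, which converges since $g$ is bounded.

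\textbf{Second integration by parts ($\alpha\ne1$).} Integrate by parts again, using $y^{-\alpha}=\frac{d}{dy}\frac{y^{1-\alpha}}{1-\alpha}$, with $h(y)=u_x(x+y)-u_x(x-y)$ and $h'(y)=u_{xx}(x+y)+u_{xx}(x-y)$.
\begin{itemize}
\item At $0$: $h(0)=0$ and $|h(y)|\le 2\|u_{xx}\|_\infty y$, so the boundary term is $O(\epsilon^{2-\alpha})\to0$.
\item At infinity, $\alpha\in(1,2)$: the term $h(R)R^{1-\alpha}$ vanishes because $h$ is bounded. Boundedness of $u_x$ follows from $u,u_{xx}$ bounded by a Landau-type interpolation.
\item At infinity, $\alpha\in(0,1)$: $R^{1-\alpha}$ grows. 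Here the extra hypothesis $u_x\to0$ is not enough on its own, and I expect this to be the main obstacle.
\end{itemize}

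For $\alpha\in(0,1)$, I would not do a naive second integration by parts. Instead I would truncate at $R$ and show $\int_R^\infty h(y)y^{-\alpha}dy-\frac{h(R)R^{1-\alpha}}{1-\alpha}$ tends to zero along a suitable sequence $R\to\infty$. Alternatively, I would interpret $\int u_{xx}(y)|x-y|^{1-\alpha}dy$ as a limit over symmetric truncations, so that the identity holds in that improper sense. In practice I would first prove the identity for $u$ with $u_x$ compactly supported, then approximate. With either approach the second form becomes $-\frac{c_\alpha}{\alpha(\alpha-1)}\int_0^\infty (u_{xx}(x+y)+u_{xx}(x-y))y^{1-\alpha}dy$, which rearranges to $\frac{c_\alpha}{\alpha(1-\alpha)}\int_{\R}\frac{u_{xx}(y)}{|x-y|^{\alpha-1}}dy$ as claimed.
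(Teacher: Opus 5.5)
The paper does not prove this lemma; it quotes it from \cite[Lemma 2.1]{cayamacuestadelahoz2021}, so there is no in-text argument to compare against. For $\alpha\in[1,2)$ your argument is correct and complete. This covers the symmetrization and the first integration by parts with its $O(\epsilon^{2-\alpha})$ boundary term. It also covers $c_1=1/\pi$ and the conditionally convergent tail when $\alpha=1$. For $\alpha\in(1,2)$, the second integration by parts works because $h(R)R^{1-\alpha}\to0$ by the interpolation bound on $u_x$.

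The genuine gap is $\alpha\in(0,1)$. You identify it but do not close it, and it cannot be closed, because under the stated hypotheses the identity fails there. Take $\alpha=1/2$, $x=0$ and $u(y)=\sin\bigl((1+y^2)^{1/4}\bigr)$. Then $u\in\mathcal C_b^2(\R)$ and $u_x(y)=\sgn(y)\cos(\sqrt{|y|})/(2\sqrt{|y|})+O(y^{-2})\to0$. As $y\to+\infty$, $u_{xx}(y)y^{1/2}=-\sin(\sqrt y)/(4\sqrt y)-\cos(\sqrt y)/(4y)+O(y^{-2})$. Since $u$ is even, this gives
\begin{equation*}
\int_{-R}^{R}u_{xx}(y)|y|^{1/2}\,dy=\cos(\sqrt R)+C+o(1),\qquad R\to\infty.
\end{equation*}
So the right-hand side of \eqref{e:D2aalternative} exists neither as a Lebesgue integral nor as a symmetric (principal value) limit, while the left-hand side is finite. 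This is also why your remedies fail:
\begin{itemize}
\item symmetric truncation does not help, since the example is even about $x=0$;
\item a sequence $R_k$ with $h(R_k)R_k^{1-\alpha}\to0$ only yields a limit along that particular sequence, not the stated integral;
\item approximation by functions with compactly supported $u_x$ cannot converge to a quantity that is undefined.
\end{itemize}

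What your two integrations by parts do prove, for every $\alpha\in(0,1)\cup(1,2)$ under the stated hypotheses, is
\begin{equation*}
(-\Delta)^{\alpha/2}u(x)=\frac{c_\alpha}{\alpha(1-\alpha)}\lim_{R\to\infty}\left[\int_{x-R}^{x+R}\frac{u_{xx}(y)}{|x-y|^{\alpha-1}}\,dy-R^{1-\alpha}\bigl(u_x(x+R)-u_x(x-R)\bigr)\right].
\end{equation*}
In the example above, the boundary term behaves like $\cos(\sqrt R)$ and cancels the oscillation exactly. To obtain \eqref{e:D2aalternative} itself for $\alpha\in(0,1)$, the hypothesis $u_x\to0$ must be strengthened, e.g.\ to $|y|^{1-\alpha}u_x(y)\to0$ as $y\to\pm\infty$. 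With that assumption, your direct second integration by parts goes through verbatim.
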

Working with \eqref{e:D2aalternative} instead of with \eqref{e:D2a} is particularly advantageous from a numerical point of view, and particularly well-suited for the implementation of a pseudospectral method, thanks to its convolutional form. In this regard, we can mention the series of papers \cite{cayamacuestadelahoz2020,cayamacuestadelahoz2021,cuestadelahozgirona2024,cayamacuestadelahozgarciacervera2025}, where, after transforming the integration domain from $\R$ to $(0,\pi)$ by means of the change of variable $x = L\cot(\xi)$, with $L > 0$, and defining the operator $(-\Delta)_\xi^{\alpha/2} u(\xi) \equiv [(-\Delta)^{\alpha/2} u](L\cot(\xi))$, a Fourier-series expansion of the function $u(L\cot(\xi))$ is performed, and the problem of approximating numerically $(-\Delta)_\xi^{\alpha/2} u(\xi)$ is reduced to computing $(-\Delta)_\xi^{\alpha/2} e^{ik\xi}$, with $k\in\mathbb Z$ (note that, in this paper, the subscript in $(-\Delta)^s_p$ refers exclusively to the fractional $p$-Laplacian, so there is no risk of confusion). This allows the construction of a high-accuracy operational matrix, as in \cite{cayamacuestadelahoz2021}, but, additionally, when $k$ is even, it is possible to express explicitly $(-\Delta)_\xi^{\alpha/2} e^{ik\xi}$ for $\alpha\in(0,1)\cup(1,2)$ in terms of hypergeometric Gaussian functions ${}_2F_1$, as in \cite{cayamacuestadelahoz2020}, although their numerical application requires the use of arbitrary precision. On top of that, the important case with $\alpha = 1$, which corresponds to the so-called half-Laplacian, allows computing $(-\Delta)_\xi^{1/2} e^{ik\xi}$ explicitly also when $k$ is odd, and the resulting expression can be again expressed in terms of ${}_2F_1$, although its numerical implementation does not require the use of arbitrary precision, and allows the use of a very fast FFT-based convolution algorithm. Note that the use of an FFT-based convolution algorithm is also possible when $\alpha\in(0,1)\cup(1,2)$, at the expense of losing some accuracy, as in \cite{cayamacuestadelahozgarciacervera2025}, where a second-order modification of the midpoint rule is considered to approximate  $(-\Delta)_\xi^{\alpha/2} e^{ik\xi}$. 

Even if it is possible to find formulations similar to \eqref{e:D2aalternative} in higher dimensions (see, e.g., \cite[Appendix A]{cayamacuestadelahoz2021}), it is still not clear how to use them numerically. Therefore, in the literature, other approaches are followed when approximating numerically the fractional Laplacian in multiple dimensions without truncating the domain, which is $\R^n$. For instance, in \cite{sheng2020}, they use a variational form corresponding to the fractional Laplacian formulated by means of the Dunford-Taylor formulation. More precisely, for any $u, v\in H^s(\R^n)$, with $s\in(0,1)$,
\begin{equation}
\label{e:DunfordTaylor}
\langle(-\Delta)^{s/2}u,(-\Delta)^{s/2}v\rangle_{L^2(\R^n)} = \frac{2\sin(\pi s)}{\pi}\int_0^\infty t^{1-2s}\int_{\R^n}\left((-\Delta)(\mathbb I - t^2\Delta)^{-1}u\right)(\vx)v(\vx)d\vx dt,
\end{equation}
where $\mathbb I$ denotes the identity operator. Then, a fast spectral-Galerkin method is applied to \eqref{e:DunfordTaylor}, using mapped Chebyshev functions as basis functions.

In the literature, it is also common to talk about the Balakrishnan formulation of the fractional Laplacian. In fact, given an operator, the Dunford-Taylor formulation and the Balakrishnan formulation are related ways to define fractional powers of it, and they coincide in the case of the fractional Laplacian (see \cite{kwasnicki}). In this regard, in this paper, we follow \cite{teso2021}, which, among other things, offers a generalization of the Balakrishnan formula for the linear fractional Laplacian, given by
\begin{equation}
\label{e:balakrishnan}
(-\Delta)^s u(\vx) = \frac{\sin(\pi s)}{\pi}\int_0^\infty(-\Delta)(t - \Delta)^{-1}[u](\vx)\frac{dt}{t^{1-s}},
\end{equation}
to the nonlinear Balakrishnan formula for the fractional $p$-Laplacian, for $p\neq2$:
\begin{equation}
\label{e:balakrishnanp}
(-\Delta)_p^s u(\vx) = C_4(n,s,p)\int_0^\infty\Delta(t - \Delta)^{-1}\left[\Phi_p(u(\vx) - u(\cdot))\right](\vx)\frac{dt}{t^{1-sp/2}},
\end{equation}
where
\begin{equation*}
C_4(n, s, p) = \frac{\pi^{n/2}}{2^{sp}\Gamma((2+sp)/2)\Gamma((n+sp)/2)}C_1(n,s,p),
\end{equation*}
and
\begin{equation}
\label{e:Phip}
\Phi_p(t) = |t|^{p-2}t.
\end{equation}
Note that $\Delta(t - \Delta)^{-1}$ is applied to the variables denoted by a dot in $\Phi_p(u(\vx) - u(\cdot))$, after fixing $\vx$, then the result is a function that must be evaluated precisely at $\vx$, so it only depends on $t$.

Let us mention that there are other representations of the fractional Laplacian, besides the Bala\-krishnan formulation. Indeed, in \cite{teso2021}, the Bochner formula for the fractional Laplacian, given by
\begin{equation}
\label{e:bochner}
(-\Delta)^s u(\vx) = \frac{1}{\Gamma(-s)}\int_0^\infty(e^{t\Delta}[u](\vx) - u(\vx))\frac{dt}{t^{1+s}},
\end{equation}
is generalized to
\begin{equation}
\label{e:bochnerp}
(-\Delta)_p^s u(\vx) = C_2(n,s,p)\int_0^\infty e^{t\Delta}\left[\Phi_p(u(\vx) - u(\cdot))\right](\vx)\frac{dt}{t^{1+sp/2}},
\end{equation}
where
\begin{equation*}
C_2(n, s, p) = \frac{\pi^{n/2}}{2^{sp}\Gamma((n+sp)/2)}C_1(n,s,p).
\end{equation*}
Note that the representations \eqref{e:balakrishnanp} and \eqref{e:bochnerp} are formally valid (see \cite{teso2021}) when $p\in(1, \infty)$, $s\in(0,1)$ and $sp < 2$. However, the relationship between those formulas and \eqref{e:Dpa} is not completely clear, when, e.g., $sp \ge 2$.

To date, only a very limited number of studies have proposed concrete numerical discretizations or approximation strategies specifically for the fractional p-Laplacian. While \cite{tesoetal2018II} and \cite{tesoetal2019I} and the more recent \cite{tesoetal2024} established foundational monotone finite difference discretizations for nonlocal operators, and \cite{Vazquez2020} provided early numerical profiles of self-similar solutions, it should be noted that these methods were primarily implemented for the study of evolution equations. In contrast, the pseudospectral method developed in this paper offers a significant shift in methodology. Specifically, whereas the primary aim of these earlier works was to manage the mathematical challenges posed by singularities and regularity—ensuring convergence even for nonsmooth distributional solutions and handling the "singular range" where the derivatives of the nonlinearity become problematic, the current approach is engineered specifically for high spectral accuracy and computational precision. By leveraging the nonlinear Balakrishnan formulation with well-conditioned, spectrally accurate differentiation matrices, this method achieves streamlined performance across multiple dimensions without requiring domain truncation or the heavy computational burden of multidimensional quadrature. Indeed, unlike traditional methods that rely on domain truncation or heavy multidimensional quadrature, our approach utilizes a domain transformation to map the unbounded nature of $\R^n$ onto a finite domain while maintaining spectral precision.


In this paper, building on the integral representations \eqref{e:balakrishnan} and \eqref{e:balakrishnanp}, we use a pseudospectral method to approximate those operators numerically. As in \cite{cayamacuestadelahoz2020,cayamacuestadelahoz2021,cuestadelahozgirona2024,cayamacuestadelahozgarciacervera2025}, we transform in the one-dimensional case the integration domain from $\R$ to $(0,\pi)$ by means of the change of variable $x = L\cot(\xi)$, with $L > 0$. Then, we discretize the second-order partial derivatives appearing in the Laplacian operator $\Delta$ by means of spectrally accurate differentiation matrices, which are diagonalized afterwards to evaluate fractional powers efficiently. A key feature of the proposed method is that the diagonalization of the resulting second-order differentiation matrices is well conditioned, which enables the construction of a sound and robust numerical scheme that extends in a natural way to arbitrary spatial dimensions $n$ (by applying a similar transformation in each Cartesian coordinate). Moreover, this diagonalization allows the fractional operator, within the Balakrishnan framework, to act directly on the eigenvalue spectrum (see Theorem~\ref{t:fraclap2D}), effectively reducing the integral representation to an analytical evaluation at the spectral level and thereby avoiding costly multidimensional quadrature. The resulting method avoids domain truncation and variational formulations, making it both computationally efficient and conceptually straightforward. In particular, to the authors' knowledge, this is the first pseudospectral method for the fractional $p$-Laplacian on unbounded domains that naturally extends to multiple spatial dimensions.

The structure of this paper is as follows: in Section~\ref{s:matrices}, we construct second-order differentiation matrices associated to the following nodes:
\begin{equation*}
x_j = L\cot\left(\frac{\pi(2j-1)}{2N}\right), \quad 1 \le j \le N,
\end{equation*}
which are precisely the mapped Chebyshev points of the first kind.

In Section~\ref{s:fraclap}, after diagonalizing the matrices developed in Section~\ref{s:matrices}, and introducing them into \eqref{e:balakrishnan}, we develop a numerical method to approximate the fractional Laplacian in $n$ dimensions. We also show that it is equivalent to work with \eqref{e:bochner}. Note that, in order to facilitate the understanding of the method, we have found it convenient to start with the $n = 2$ case, because the $n = 1$ case follows trivially, and it is straightforward to generalize it to $n \ge 3$. However, in our opinion, proposing directly the method for any $n\in\mathbb N$ would make the exposition less clear.

In Section~\ref{s:fracplap}, we apply the ideas in Section~\ref{s:fraclap} to \eqref{e:balakrishnanp} to approximate numerically the fractional $p$-Laplacian in $n$ dimensions.

In Section~\ref{s:numericalfracLap}, we carry out the numerical experiments for the fractional Laplacian, and in Section~\ref{s:numericalfracpLap}, those for the fractional $p$-Laplacian. Finally, in Section~\ref{s:evolution}, we consider an evolution equation involving the fractional $p$-Laplacian.

In all cases, our programming language of choice has been MATLAB \cite{matlab}, which proves particularly suitable in those problems that imply a heavy use of arrays. In fact, we have tried to exploit the possibilities of MATLAB's syntax as much as possible, in order to develop compact and efficient implementations. In order to facilitate the understanding of the methods, we offer fully working codes.

Those experiments where we indicate the execution times have been run on an Apple MacBook Pro (13-inch, 2020, 2.3 GHz Quad-Core Intel Core i7, 32 GB).

\section{Construction of second-order differentiation matrices}

\label{s:matrices}

Suppose that $u(x)$ is a bounded, regular function defined on $\R$. As in \cite{cayamacuestadelahoz2020,cayamacuestadelahoz2021,cuestadelahozgirona2024,cayamacuestadelahozgarciacervera2025}, we perform the change of variable $x = L\cot(\xi)$, that maps $\R$ into $(0, \pi)$, then discretize $\xi\in(0,\pi)$ in $N$ equally spaced inner nodes 
\begin{equation}
\label{e:xi}
\xi_j = \frac\pi N\left(j-\frac12\right) = \frac{\pi(2j-1)}{2N}, \quad 1 \le j \le N.
\end{equation}
Therefore,
\begin{equation*}
x_j = L\cot\left(\frac{\pi(2j-1)}{2N}\right), \quad 1 \le j \le N;
\end{equation*}
note that the nodes $x_j$ are indeed the first kind Chebyshev points $y_j = \cos(\pi (j - 1/2) / N)$, for $1\le j\le N$, which are precisely the $N$ roots of the $N$th degree Chebyshev polynomial $T_N(x) = \cos(N\arccos(x))$, mapped under map $x = Ly(1 - y^2)^{-1/2}$, with $y\in(-1, 1)$. In fact, this map gives rise to the underlying rational Chebyshev functions $TB_m(x)$ (see, e.g., \cite[Section 17.7]{boyd2001}), where $TB_m(x) = TB_m(Ly(1 - y^2)^{-1/2}) = T_m(x)$, being $TB_m(x)$ and $T_m(x)$ the $m$th rational Chebyshev function and the $m$th Chebyshev polynomial, respectively.

Define $U(\xi) \equiv u(L\cot(\xi))$, then,
\begin{align*}
U_\xi(\xi) & = -\frac{L}{\sin^2(\xi)}u_x(L\cot(\xi)) \Longleftrightarrow u_x(x) = -\frac{\sin^2(\xi)}{L}U_\xi(\xi),
	\cr
U_{\xi\xi}(\xi) & = \frac{2L\cos(\xi)}{\sin^3(\xi)}u_x(L\cot(\xi)) + \frac{L^2}{\sin^4(\xi)}u_{xx}(L\cot(\xi)) 
	\cr
& \qquad \Longleftrightarrow  u_{xx}(L\cot(\xi)) = \frac{\sin^4(\xi)}{L^2}U_{\xi\xi}(\xi) - \frac{2\sin(\xi)\cos(\xi)}{L}u_x(L\cot(\xi))
	\cr
& \qquad \Longleftrightarrow  u_{xx}(x) = \frac{\sin^4(\xi)}{L^2}U_{\xi\xi}(\xi) + \frac{\sin^2(\xi)\sin(2\xi)}{L^2}U_\xi(\xi).
\end{align*}
Therefore, 
\begin{equation}
\label{e:uxuxx}
u_x(x_j) = -\frac{\sin^2(\xi_j)}{L}U_\xi(\xi_j), \qquad u_{xx}(x_j) = \frac{\sin^4(\xi_j)}{L^2}U_{\xi\xi}(\xi_j) + \frac{\sin^2(\xi_j)\sin(2\xi_j)}{L^2}U_\xi(\xi_j),
\end{equation}
so the numerical approximation of $\{u_{x}(x_j)\}$ can be reduced to that of $\{U_\xi(\xi_j)\}$, and the numerical approximation of $\{u_{xx}(x_j)\}$, to that of $\{U_\xi(\xi_j)\}$ and $\{U_{\xi\xi}(\xi_j)\}$, which can be computed by means of the differentiation matrices associated to periodic grids developed in \cite[Chapter 3]{trefethen2000}. Note, however, that $U(\xi)$ may not be periodic, i.e., it may happen that $U(0) \neq U(\pi)$. Therefore, we perform an even extension at $\xi = \pi$, i.e., $U(\pi + \xi) \equiv U(\pi - \xi)$, from which it follows that
\begin{equation*}
U(\xi_j) =
\begin{cases}
U(\xi_j), & 1 \le j \le N,
	\cr
U(\xi_{2N+1 - j}), & N+1 \le j \le 2N.
\end{cases}
\end{equation*}
In matrix form:
\begin{equation}
\label{e:Itilde}
\begin{pmatrix}
U(\xi_1) \\ \vdots \\ U(\xi_{2N})
\end{pmatrix}
=
\tilde\I
\cdot
\begin{pmatrix}
U(\xi_1) \\ \vdots \\ U(\xi_{N})
\end{pmatrix},
\end{equation}
where $\tilde\I\in\R^{(2N)\times N}$ denotes the block matrix consisting of the identity matrix $\I_N$ of order $N$ stacked above $\I_N$ with its rows in reverse order:
\begin{equation*}
\tilde\I =
\left(
\begin{array}{c}
	\I_N
	\\
	\hline
	\I_N(N:-1:1, :)
\end{array}
\right).
\end{equation*}
Even if in this paper we will only use an even extension, which is enough for our purposes, sometimes it is possible to consider also other types of extensions, like an odd extension at $\xi = \pi$, $U(\pi + \xi) \equiv -U(\pi - \xi)$, or simply a $\pi$-periodic extension, $U(\pi + \xi) \equiv U(\xi)$, in which case $\tilde\I$ would be defined as
\begin{equation*}
\tilde\I =
\left(
\begin{array}{c}
	\I_N
	\\
	\hline	
	-\I_N(N:-1:1, :)
\end{array}
\right)
\text{ (odd extension) or }
\tilde\I =
\left(
\begin{array}{c}
	\I_N
	\\
	\hline
	\I_N
\end{array}
\right)
\text{ ($\pi$-periodic extension)}.
\end{equation*}
Then, in order to approximate $U_\xi(\xi_j)$ and $U_{\xi\xi}(\xi_j)$ in \eqref{e:uxuxx}, for $1\le j\le N$, we use the spectral differentiation matrices defined in \cite[Chapter 3]{trefethen2000}, but taking $2N$ instead of $N$; note that, although we apply them to $U(\xi_j)$, for $1\le j\le 2N$, we only keep the first $N$ rows, and discard the last $N$ rows, which are not necessary, because we do not need $U_\xi(\xi_j)$ and $U_{\xi\xi}(\xi_j)$, for $N+1\le j\le 2N$. More precisely, we have
\begin{equation}
\label{e:UDxi}
\begin{pmatrix}
U_\xi(\xi_1) \\ \vdots \\ U_\xi(\xi_{N})
\end{pmatrix}
\approx
\D_\xi\cdot
\begin{pmatrix}
U(\xi_1) \\ \vdots \\ U(\xi_{2N})
\end{pmatrix},
\qquad
\begin{pmatrix}
U_{\xi\xi}(\xi_1) \\ \vdots \\ U_{\xi\xi}(\xi_{N})
\end{pmatrix}
\approx
\D_{\xi\xi}\cdot
\begin{pmatrix}
U(\xi_1) \\ \vdots \\ U(\xi_{2N})
\end{pmatrix},
\end{equation}
where $\D_\xi\in\R^{N\times(2N)}$ is given by
\begin{equation}
\label{e:Dxiij}
[\D_\xi]_{ij} =
\left\{
\begin{aligned}
	& 0, & & i = j,
	\cr
	& \frac12(-1)^{i-j}\cot\left(\frac{\pi(i-j)}{2N}\right), & & i \neq j,
\end{aligned}
\right.
\end{equation}
and $\Dxx$ by
\begin{equation}
\label{e:Dxixiij}
[\D_{\xi\xi}]_{ij} =
\left\{
\begin{aligned}
	& {-}\frac{2N^2+1}{6}, & & i = j,
	\cr
	& {-}\frac12(-1)^{i-j}\sin^{-2}\left(\frac{\pi(i-j)}{2N}\right), & & i \neq j;
\end{aligned}
\right.
\end{equation}
where we have taken $1\le i\le N$ and $1\le j\le 2N$ in both \eqref{e:Dxiij} and \eqref{e:Dxixiij}.

Let us denote $\xxii = (\xi_1, \ldots, \xi_N)^T$. Taking into account \eqref{e:uxuxx}, \eqref{e:Itilde} and \eqref{e:UDxi}, we define the following differentiation matrices:
\begin{align}
	\label{e:Dx}
	\Dx & = -\diag(\sin^2(\xxii))\cdot\D_\xi\cdot\tilde\I,
	\\
	\label{e:Dxx}
	\Dxx & = \diag(\sin^4(\xxii))\cdot\D_{\xi\xi}\cdot\tilde\I + \diag(\sin^2(\xxii)\sin(2\xxii))\cdot\D_\xi\cdot\tilde\I
	\cr
	& = \diag(\sin^4(\xxii))\cdot\D_{\xi\xi}\cdot\tilde\I - \diag(\sin(2\xxii))\cdot\Dx,
\end{align}
where $\diag(\cdot)$ denotes the diagonal matrix whose entries are those of the vector given as the argument. Although in this paper we are only interested in $\Dxx$, the computational cost of generating $\Dx$ or both $\Dx$ and $\Dxx$ is virtually the same, so we generate both, and use the second expression of $\Dxx$, that uses $\Dx$ explicitly. Then, bearing in mind that $u(x_j) \equiv U(\xi_j)$,
\begin{equation*}
	\begin{pmatrix}
		u_x(x_1) \\ \vdots \\ u_x(x_{N})
	\end{pmatrix}
	\approx
	\frac1L\Dx
	\begin{pmatrix}
		u(x_1) \\ \vdots \\ u(x_{N})
	\end{pmatrix},
	\qquad
	\begin{pmatrix}
		u_{xx}(x_1) \\ \vdots \\ u_{xx}(x_{N})
	\end{pmatrix}
	\approx
	\frac1{L^2}\Dxx
	\begin{pmatrix}
		u(x_1) \\ \vdots \\ u(x_{N})
	\end{pmatrix}.
\end{equation*}
Furthermore, from an implementational point of view, it is enough to generate the first $\lceil N/2\rceil$ rows of $\Dx$ and $\Dxx$. Indeed, from \eqref{e:Dxiij} and \eqref{e:Dx}, and from \eqref{e:Dxixiij} and \eqref{e:Dxx}, it follows respectively that
\begin{equation*}
[\Dx]_{N+1-i,N+1-j} = -[\Dx]_{i,j}, \quad 1 \le i, j\le N, \qquad [\Dxx]_{N+1-i,N+1-j} = [\Dxx]_{i,j}, \quad 1 \le i, j\le N,
\end{equation*}
and, likewise, $x_{N+1-j} = -x_j$, for $1 \le j\le N$. Therefore, we also only need the first $\lceil N/2\rceil$ rows of $\D_\xi$ and $\D_{\xi\xi}$, which are Toeplitz matrices; in fact, $\D_\xi$ and $\D_{\xi\xi}$ would be circulant matrices, should we consider $2N$ rows in their respective definitions \eqref{e:Dxiij} and \eqref{e:Dxixiij}. More precisely, their first $\lceil N/2\rceil$ rows have the structure of a matrix $\C\in\R^{\lceil N/2\rceil\times(2N)}$ defined as
\begin{equation*}
\C =
\left(
\begin{array}{cccc|ccccc}
	c_{2N+1} & c_{2N+2} & \ldots & c_{3N} & c_{N+1} & c_{N+2} & \ldots & c_{2N}
	\\
	c_{2N} & c_{2N+1} & \ldots & c_{3N-1} & c_{N} & c_{N+1} & \ldots & c_{2N-1}
	\\
	\vdots & \vdots & \ddots & \vdots & \vdots & \vdots & \ddots & \vdots
	\\
	c_{N+2+\lfloor N/2\rfloor} & c_{N+3+\lfloor N/2\rfloor} & \ldots & c_{2N+1+\lfloor N/2\rfloor} & c_{2+\lfloor N/2\rfloor} & c_{3+\lfloor N/2\rfloor} & \ldots & c_{N+1+\lfloor N/2\rfloor}
\end{array}
\right),
\end{equation*}
where $c_{2N+j} \equiv c_j$. The reason why we write the subindices in this way, i.e., $c_{2N+1}$ instead of $c_1$, etc., is because the generation of matrices of the type of $\C$, i.e., with subscripts ordered monotonically both row-wise and column-wise becomes extremely efficient in MATLAB. On the other hand, to further optimize the code, we compute directly $\D_\xi\cdot\tilde\I$ and $\D_{\xi\xi}\cdot\tilde\I$, which have the following structure:
\begin{equation}
\label{e:CtildeI}
\C \cdot\tilde\I =
	\begin{pmatrix}
		c_{2N+1} & c_{2N+2} & \ldots & c_{3N}
		\\
		c_{2N} & c_{2N+1} & \ldots & c_{3N-1}
		\\
		\vdots & \vdots & \ddots & \vdots
		\\
		c_{N+2+\lfloor N/2\rfloor} & c_{N+3+\lfloor N/2\rfloor} & \ldots & c_{2N+1+\lfloor N/2\rfloor}
	\end{pmatrix}
	+
\begin{pmatrix}
	c_{2N} & c_{2N-1} & \ldots & c_{N+1}
	\\
	c_{2N-1} & c_{2N-2} & \ldots & c_N
	\\
	\vdots & \vdots & \ddots & \vdots
	\\
	c_{N+1+\lfloor N/2\rfloor} & c_{N+\lfloor N/2\rfloor} & \ldots & c_{2+\lfloor N/2\rfloor}
\end{pmatrix}.
\end{equation}
Therefore, if we have a matrix having the shape of $\C$, we take its first row, which is formed by the elements $c_j$, for $1 \le j \le 2N$, extend it to $1 \le j \le 3N$, by defining $c_{2N+j} \equiv c_j$, then generate the indices corresponding to the two matrices on the right-hand side of \eqref{e:CtildeI}. In MATLAB, this is done by typing:
\begin{verbatim}
	index1=(2*N+1:3*N)-(0:ceil(N/2)-1)';
	index2=(2*N:-1:N+1)-(0:ceil(N/2)-1)';
\end{verbatim}
Then, if \verb|c| stores $c_j$, for $1\le j \le3N$, $\C\cdot\tilde\I$ is obtained by simply typing \verb|c(index1)+c(index2);|.  Furthermore, should we want an odd extension of $U(\xi)$ at $\xi=\pi$, i.e., $U(\pi+\xi) = -U(\pi - \xi)$, then, we would type \verb|c(index1)-c(index2);|. Finally, should a $\pi$-periodic extension of $U(\xi)$ be required, i.e., $U(\pi+\xi) = U(\xi)$, then, we would flip the columns of the second matrix on the right-hand side of \eqref{e:CtildeI}, which would imply defining \verb|index2=(N+1:2*N)-(0:ceil(N/2)-1)';|, followed again by \verb|c(index1)+c(index2);| In all cases, note that \verb|index1| and \verb|index2| can be used in the generation of the first $\lceil N/2\rceil$ rows of both $\D_\xi\cdot\tilde\I$ and $\D_{\xi\xi}\cdot\tilde\I$, which produces a faster code that, e.g., by using the MATLAB command \verb|toeplitz|.

Coming back to \eqref{e:Dxiij} and \eqref{e:Dxixiij}, a relevant fact is that, given an infinitesimal value $0 < \epsilon \ll 1$, MATLAB computes more accurate $-\cot(\epsilon)$ and $\sin^{-2}(\epsilon)$ than $\cot(\pi - \epsilon)$ and $\sin^{-2}(\pi - \epsilon)$. For instance, \verb|-cot(1e-9)| and \verb|cot(pi-1e-8)| return \verb|-9.999999999999999e+08| and \verb|-9.999997947949913e+08|, respectively, and \verb|sin(1e-9)^-2| and \verb|sin(pi-1e-9)^-2| return \verb|9.999999999999999e+17| and \verb|9.999995895900246e+17|, respectively. Hence, we use the formulas $\cot(\pi-\epsilon) = -\cot(\epsilon)$ and $\sin^{-2}(\pi-\epsilon) = \sin^{-2}(\epsilon)$, and operate with $\epsilon\in(0,\pi/2)$. Bearing in mind this, we define the vector $\mathbf c^{(1)} = (c_1^{(1)}, \ldots, c_{3N}^{(1)})$ corresponding to the first row of $\D_\xi$, by taking $i = 1$ in \eqref{e:Dxiij}, for $1\le j\le N$, and making $c_{2N+j}^{(1)} = c_j^{(1)}$, for $1 \le j \le N$, getting:
\begin{equation*}
c_j^{(1)} = 
\left\{
\begin{aligned}
	& 0, & & j \in \{1, N+1, 2N+1\},
	\cr
	& \frac12(-1)^j\cot\left(\frac{\pi(j-1)}{2N}\right), & & 2 \le j \le N,
	\cr
	& {-}\frac12(-1)^j\cot\left(\frac{\pi(2N - j + 1)}{2N}\right) = -c_{2N-j+2}^{(1)}, & & N+2 \le j \le 2N,
	\cr
	& \frac12(-1)^j\cot\left(\frac{\pi(j-1-2N)}{2N}\right) = c_{j - 2N}^{(1)}, & & 2N+2 \le j \le 3N.
\end{aligned}
\right.
\end{equation*}
Likewise, we define the vector $\mathbf c^{(2)} = (c_1^{(2)}, \ldots, c_{3N}^{(2)})$ corresponding to the first row of $\D_{\xi\xi}$, by taking $i = 1$ in \eqref{e:Dxixiij}, for $1\le j\le N$, and making $c_{2N+j}^{(2)} = c_j^{(2)}$, for $1 \le j \le N$, getting:
\begin{equation*}
	c_j ^{(2)} = 
	\left\{
	\begin{aligned}
		& {-}\frac{2N^2+1}{6}, & & j \in \{1, 2N+1\},
		\cr
		& \frac12(-1)^j\sin^{-2}\left(\frac{\pi(j-1)}{2N}\right), & & 2 \le j \le N,
		\cr
		& {-}\frac12(-1)^N, & & j = N + 1,
		\cr
		& \frac12(-1)^j\sin^{-2}\left(\frac{\pi(2N - j + 1)}{2N}\right) = c_{2N-j+2}^{(2)}, & & N+2 \le j \le 2N,
		\cr
		& \frac12(-1)^j\sin^{-2}\left(\frac{\pi(j-1-2N)}{2N}\right) = c_{j-2N}^{(2)}, & & 2N+2 \le j \le 3N.
	\end{aligned}
	\right.
\end{equation*}
$\mathbf c^{(1)}$ and $\mathbf c^{(2)}$ enable us to create $\D_\xi$ and $\D_{\xi\xi}$, respectively, from which, after applying \eqref{e:Dx} and \eqref{e:Dxx}, $\Dx$ and $\Dxx$ follow. Bearing in mind all the previous arguments, we offer in Listing~\ref{code:createDxDxx} the MATLAB function \verb|create_Dx_Dxx|, that generates $\Dx$ and $\Dxx$, as well as a vector $\mathbf x$ containing the nodes $\{x_j\}$; note that $\Dxx$ is returned first, because it is the only matrix that we will need in the rest of the paper. Note that, in MATLAB, if \verb|v| is a column vector of compatible side, \verb|diag(v)*A| and \verb|v.*A| produce the same result in MATLAB, but the latter is more efficient, so we do not use explicitly \verb|diag|.

\begin{lstlisting}[label=code:createDxDxx, language=MATLAB, basicstyle=\footnotesize, caption = {Function \texttt{create\_Dx\_Dxx}, to generate the matrices $\Dxx$, $\Dx$ and the nodes $\{x_j\}$}]
% Function that returns $\mathbf D_{xx}$, $\mathbf D_x$ and $\mathbf x$
function [Dxx,Dx,x]=create_Dx_Dxx(N,L)
xi=pi*((1:ceil(N/2))-1/2).'/N; % $\boldsymbol{\xi}$
x=cot(xi);
index1=(2*N+1:3*N)-(0:ceil(N/2)-1)';
index2=(2*N:-1:N+1)-(0:ceil(N/2)-1)';
aux=(-1).^(2:N).*cot(pi*(1:N-1)/(2*N))/2;
c1=[0,aux,0,-aux(N-1:-1:1),0,aux]; % $\mathbf c^{(1)}$
Dx=-sin(xi).^2.*(c1(index1)+c1(index2));
aux=-(-1).^(1:N-1)./sin(pi*(1:N-1)/(2*N)).^2/2;
c2=[-(2*N^2+1)/6,aux,-(-1)^N/2,aux(N-1:-1:1),-(2*N^2+1)/6,aux]; % $\mathbf c^{(2)}$
Dxx=sin(xi).^4.*(c2(index1)+c2(index2))-sin(2*xi).*Dx;
x=L*[x;-x(floor(N/2):-1:1)];
Dx=[Dx;-Dx(floor(N/2):-1:1,N:-1:1)]/L;
Dxx=[Dxx;Dxx(floor(N/2):-1:1,N:-1:1)]/L^2;
\end{lstlisting}

With the method thus developed, we find particularly remarkable that, besides the stability of the whole process, very high accuracies are still possible even for very large values of $N$. To illustrate this, we offer in Listing~\ref{code:testDxDxx} a simple numerical test that approximates the first and second derivatives of $u(x)=1-\exp(-x^2)$ and $u(x)=\arctan(x)$, and returns the discrete $\ell^\infty$ of the errors. Observe that these functions have been chosen, because, they do not tend to zero at infinity, and have different asymptotic properties. More precisely, $u(x)=1-\exp(-x^2)$ tends to $1$ at a Gaussian rate, as $x\to\pm\infty$, whereas $u(x)=\arctan(x)$ tends quadratically (i.e., algebraically) fast to $\pm\pi/2$, as $x\to\pm\infty$. Therefore, an even extension suits well in both cases. We have taken $N=20000$, and a value of $L$ that yields good results for both functions, i.e., $L=400$. Then, the error in the numerical approximation of the first and second derivatives are respectively of the order of $\mathcal O(10^{-12})$ and $\mathcal O(10^{-11})$. Let us also remark that, even if \verb|Dx| and \verb|Dxx| require each $3.2\times10^9$ bytes of storage, i.e. over 3 Gb, the elapsed time was approximately of 13 seconds.

\begin{lstlisting}[label=code:testDxDxx, language=MATLAB, basicstyle=\footnotesize, caption = {Program \texttt{test\_Dx\_Dxx.m}, that test $\Dx$ and $\Dxx$}]
clear
tic
N=20000; % Number of nodes $N$
L=400; % Scale factor $L$
[Dxx,Dx,x]=create_Dx_Dxx(N);
Dx=Dx/L; % Scale $\mathbf D_x$
Dxx=Dxx/L^2; % Scale $\mathbf D_{xx}$
x=L*x; % Scale $\mathbf x$
norm(Dx*(1-exp(-x.^2))-(2*x.*exp(-x.^2)),inf) % 1.808997396324230e-12
norm(Dxx*(1-exp(-x.^2))-((2-4*x.^2).*exp(-x.^2)),inf) % 7.556577585887680e-11
norm(Dx*atan(x)-1./(1+x.^2),inf) % 1.758038159493935e-12
norm(Dxx*atan(x)-(-2*x./(1+x.^2).^2),inf) % 6.811984309962327e-11
toc
\end{lstlisting}

\section{Numerical approximation of the fractional Laplacian in $n$ dimensions}

\label{s:fraclap}

After creating a second-order differentiation matrix $\Dxx$, we will use it to approximate numerically the fractional Laplacian by using \eqref{e:balakrishnan}. A crucial point here is that $\Dxx$ is diagonalizable, i.e., it can be factorized as $\Dxx = \Px\cdot\Lx\cdot\Px^{-1}$ (note that we use $\Lx$ rather than $\D$ to denote the diagonal eigenvalue matrix, in order to avoid any confusions with differentiation matrices), and its eigenvector matrix $\PP$ is well-conditioned; this does not happen, e.g., in the Chebyshev differentiation matrices defined on $[-1,1]$ (see \cite[Chapter 6]{trefethen2000}). To illustrate this, we have diagonalized $\Dxx$ for $100 \le N \le 5000$, by typing \verb|[P,~]=eig(Dxx);| in MATLAB, and have computed the condition number $\kappa(\PP)$ associated to the Euclidean norm of $\PP$ by typing \verb|cond(P)|. A least squares regression analysis reveals that $\log_{10}(\kappa(\PP)) \approx 0.7478\log_{10}(N) - 0.1149$, being Pearson's correlation coefficient $r = 1 - 5.0022\times10^{-7}$. Therefore, $\kappa(\PP)$ grows sublinearly with respect to $N$, i.e., $\kappa(\PP)\approx 0.7676N^{0.7478}$, and, indeed, when $N = 5000$, $\kappa(\PP) = 448.75$. Thanks to this fact, it is numerically safe to work with the diagonalization of $\Dxx$. Observe also that, for a given $N$, it is immediate to check that the corresponding $\Dxx$ has $N - 1$ negative eigenvalues, and one eigenvalue that would be exactly zero, should exact arithmetic be used, but in practice, due to rounding errors, it takes an infinitesimally small value, that can be positive or negative. For instance, when $N = 5000$, the theoretically zero eigenvalue is $8.4098\times10^{-10}$ (for the sake of comparison, note that the eigenvalue with largest modulus is $-2.4826\times10^7$). Since the fact that the eigenvalue with the smallest absolute value not being strictly equal to zero can occasionally have a negative impact on the accuracy, we have systematically rounded it to zero, and have replaced the corresponding column in $\PP$ by a constant vector with entries equal to $N^{-1/2}$, in coherence with the result of \verb|eig|, which returns $\PP$ in such a way that its columns have unitary Euclidean length.

In what follows, we consider first the two-dimensional case, with $n = 2$, which is immediately generalizable to higher dimensions, as well as to the one-dimensional case, with $n = 1$. More precisely, we prove the following theorem.

\begin{theorem}
	
\label{t:fraclap2D}
	
Let $s\in(0,1)$. Let $\vx = (x, y)\in\R^2$. Let $N_x\in\mathbb N$ and $N_y\in\mathbb N$ denote respectively the number of nodes $x_i$, with $1 \le i\le N_x$, and $y_j$ corresponding to the $x$-axis and $y$-axis, and $\x = (x_1, \ldots, x_{N_x})^T$ and $\y = (y_1, \ldots, y_{N_y})^T$ the vectors whose entries are these nodes. Let $\Dxx\in\R^{N_x\times N_x}$ and $\Dyy\in\R^{N_y\times N_y}$ be diagonalizable and negative semidefinite differentiation matrices associated respectively to $\x$ and $\y$ that approximate numerically $\partial_{xx}$ and $\partial_{yy}$. Let $\Dxx = \Px\cdot\Lx\cdot\Px^{-1}$ and $\Dyy = \Py\cdot\Ly\cdot\Py^{-1}$, where $\Px\in\R^{N_x\times N_x}$ and $\Py\in\R^{N_y\times N_y}$ are the eigenvector matrices, and $\Lx = \diag(\lx)\in\R^{N_x\times N_x}$ and $\Ly = \diag(\ly)\in\R^{N_y\times N_y}$ are the diagonal eigenvalue matrices, with $\lx = (({\lambda_{x,1}, \ldots, \lambda_{x,N_x}}))^T$ and $\ly = (({\lambda_{y,1}, \ldots, \lambda_{y,N_y}}))^T$ being the vectors whose entries are the eigenvalues $\lambda_{x,i} \le 0$ and $\lambda_{y,j} \le 0$ of $\Dxx$ and $\Dyy$, respectively. Let $\U\in\CC^{N_x\times N_y}$ be the matrix such that $[\U]_{ij} = u(x_i, y_j)$. Then, the numerical approximation of both \eqref{e:balakrishnan} and \eqref{e:bochner} for $n = 2$ is given by
\begin{equation}
	\label{e:approxfraclap2D}
	(-\Delta)^s u(\x,\y) \approx \Px\cdot\left[(-\Lxy)^{\odot s}\odot\left[\Px^{-1}\cdot\U\cdot(\Py^T)^{-1}\right]\right]\cdot\Py^T,
\end{equation}
where, with some abuse of notation, $[(-\Delta)^s u(\x,\y)]_{ij} = (-\Delta)^s u(x_i,y_j)$; $\Lxy= \lx\cdot\Jy^T + \Jx\cdot\ly^T\in\R^{N_x\times N_y}$, with $\Jx\in\R^{N_x}$ and $\Jy\in\R^{N_y}$ being the all-one column vectors of length $N_x$ and $N_y$ respectively, i.e., $[\Lxy]_{ij} = \lambda_{x,i} + \lambda_{y,j}$; and $(\cdot)^{\odot s}$ and $\odot$ denote respectively the Hadamard or entry-wise power of order $s$ and product, respectively.

\end{theorem}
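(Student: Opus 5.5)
The plan is to discretize the Laplacian on the tensor grid and then evaluate both integral representations exactly on the spectrum of the resulting matrix.

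\textbf{Discrete Laplacian.} Acting on $\U$, the discrete Laplacian is the linear map $\U\mapsto\Dxx\cdot\U+\U\cdot\Dyy^T$. Write
\begin{equation*}
\U=\Px\cdot\V\cdot\Py^T, \qquad \V=\Px^{-1}\cdot\U\cdot(\Py^T)^{-1}.
\end{equation*}
Using $\Dxx=\Px\Lx\Px^{-1}$ and $\Dyy^T=(\Py^T)^{-1}\Ly\Py^T$, we get
\begin{equation*}
\Dxx\cdot\U+\U\cdot\Dyy^T=\Px\cdot(\Lx\V+\V\Ly)\cdot\Py^T=\Px\cdot(\Lxy\odot\V)\cdot\Py^T.
\end{equation*}
So in the coordinates $\V$ the discrete Laplacian is entrywise multiplication by $\Lxy$. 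Equivalently, in Kronecker form it is $\I_{N_y}\otimes\Dxx+\Dyy\otimes\I_{N_x}$. This matrix is diagonalized by $\Py\otimes\Px$, with eigenvalues $\lambda_{x,i}+\lambda_{y,j}\le0$.

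\textbf{Functional calculus.} Any scalar function $f$ of the discrete Laplacian therefore acts as
\begin{equation*}
\U\mapsto\Px\cdot\bigl(f(\Lxy)\odot\V\bigr)\cdot\Py^T,
\end{equation*}
with $f$ applied entrywise. I would check this first for polynomials by induction, and then for the resolvent $(t-\Delta)^{-1}$, $t>0$, and the semigroup $e^{t\Delta}$, which are the only functions that appear. Both are well defined because $t-\lambda>0$ for every eigenvalue $\lambda\le0$.

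\textbf{Scalar integrals.} Substituting into \eqref{e:balakrishnan} and moving the $t$-integral inside by linearity reduces everything to scalar identities for $\mu=-\lambda\ge0$:
\begin{equation*}
\frac{\sin(\pi s)}{\pi}\int_0^\infty\frac{\mu}{t+\mu}\,\frac{dt}{t^{1-s}}=\mu^s,
\qquad
\frac{1}{\Gamma(-s)}\int_0^\infty\left(e^{-t\mu}-1\right)\frac{dt}{t^{1+s}}=\mu^s.
\end{equation*}
\begin{itemize}
\item For the Balakrishnan identity, substitute $t=\mu\tau$, which turns the integral into $\mu^s\int_0^\infty\tau^{s-1}(1+\tau)^{-1}\,d\tau=\mu^s\pi/\sin(\pi s)$ by the Beta-function identity $B(s,1-s)=\pi/\sin(\pi s)$.
\item For the Bochner identity, scale $t\mapsto t/\mu$ and integrate by parts, which gives $\mu^s\Gamma(1-s)/(-s)=\mu^s\Gamma(-s)$.
\item The case $\mu=0$ gives $0=0^s$ in both identities.
\end{itemize}
Hence both representations yield $f(\Lxy)=(-\Lxy)^{\odot s}$, which is exactly \eqref{e:approxfraclap2D}.

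\textbf{Main obstacle.} The delicate step is justifying the interchange of the $t$-integral with the matrix operations. I would handle it by noting that everything is a finite linear combination of scalar integrals in the basis given by $\Py\otimes\Px$. Each such integral converges absolutely:
\begin{itemize}
\item near $t=0$, because $s>0$;
\item near $t=\infty$, because $s<1$ and the integrand decays like $t^{s-2}$ (Balakrishnan), respectively $t^{-1-s}$ (Bochner);
\item at $\mu=0$, where the integrand vanishes identically.
\end{itemize}
Integrating entrywise is therefore legitimate and no principal-value issues arise.
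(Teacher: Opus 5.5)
Your proposal is correct and follows essentially the paper's route: diagonalize the discrete Laplacian $\U\mapsto\Dxx\cdot\U+\U\cdot\Dyy^T$ through $\Px$ and $\Py$, so that the resolvent and the heat semigroup act entrywise on $\Lxy$, and then evaluate the resulting scalar integrals, which are exactly Lemma~\ref{lem:I1} and Lemma~\ref{lem:I2} (the paper proves the latter by a Laplace-transform reduction to the former, where you integrate by parts). Your explicit treatment of the zero eigenvalue and of the interchange of the $t$-integral with the matrix products goes slightly beyond the paper; note, however, that in the Bochner integral the two hypotheses play swapped roles: integrability at $t=0$ uses $s<1$ (the integrand behaves like $-\mu t^{-s}$), and integrability at $t=\infty$ uses $s>0$.
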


\begin{proof}

Let us approximate first \eqref{e:balakrishnan}, for $n = 2$. In order to do it, we have to compute $(-\Delta)(t - \Delta)^{-1}[u](x,y)$, for a given function $u(x,y)$. Recall that, given $t > 0$, $v(x,y) = (t - \Delta)^{-1}[u](x,y)$ must be understood as the solution of
\begin{equation}
\label{e:tdeltav}
(t - \Delta)v(x,y) = u(x,y).
\end{equation}
We use $\Dxx$ and $\Dyy$ to discretize $\Delta$ in \eqref{e:tdeltav}. More precisely, after denoting by $\U \approx u(\x,\y)$  and $\V \approx v(\x,\y)$ with some abuse of notation the matrices such that $U_{ij} = u(x_i, y_j)$ and $V_{ij}\approx v(x_i, y_j)$, \eqref{e:tdeltav} becomes
\begin{equation}
\label{e:VU}
t\V - \Dxx\cdot\V - \V\cdot\Dyy^T = \U,
\end{equation}
where $\Dxx\cdot\V \approx \partial_{xx}v(\x,\y)$ and $\V\cdot\Dyy^T \approx \partial_{yy}v(\x,\y)$, i.e., $[\Dxx\cdot\V]_{ij}\approx\partial_{xx}v(x_i,y_j)$ and $[\V\cdot\Dyy^T]_{ij}\approx\partial_{yy}v(x_i,y_j)$; note that $\Dyy$ appears transposed, because it is applied to the rows of $\U$. Then, introducing the factorizations of $\Dxx$ and $\Dyy$ into \eqref{e:VU},
\begin{equation*}
t\V -  \Px\cdot\Lx\cdot\Px^{-1}\cdot\V - \V\cdot(\Py\cdot\Ly\cdot\Py^{-1})^T = t\V -  \Px\cdot\Lx\cdot\Px^{-1}\cdot\V - \V\cdot(\Py^T)^{-1}\cdot\Ly\cdot\Py^T =  \U,
\end{equation*}
or, equivalently,
\begin{equation*}
t\Px^{-1}\cdot\V\cdot(\Py^T)^{-1} -  \Lx\cdot\Px^{-1}\cdot\V\cdot(\Py^T)^{-1} - \Px^{-1}\cdot\V\cdot(\Py^T)^{-1}\cdot\Ly =  \Px^{-1}\cdot\U\cdot(\Py^T)^{-1}.
\end{equation*}
Defining $\tilU = \Px^{-1}\cdot\U\cdot(\Py^T)^{-1}$ and $\tilV = \Px^{-1}\cdot\V\cdot(\Py^T)^{-1}$,
\begin{equation*}
t\tilV -  \Lx\cdot\tilV - \tilV\cdot\Ly =  (t\Jxy - \Lxy)\odot\tilV = \tilU,
\end{equation*}
where $\Jxy\in\R^{N_x\times N_y}$ is the all-one matrix of size $N_x\times N_y$, $\odot$ denotes the Hadamard or entry-wise product, and $[\Lxy]_{ij} = \lambda_{x,i} + \lambda_{y,j}$, i.e., $\Lxy= \lx\cdot\Jy^T + \Jx\cdot\ly^T$, where $\Jx\in\R^{N_x}$ and $\Jy\in\R^{N_y}$ are the all-one column vectors of length $N_x$ and $N_y$, respectively. Therefore,
\begin{equation*}
\tilV = \tilU \oslash (t\Jxy - \Lxy) \Longleftrightarrow \V = \Px\cdot\left[\tilU \oslash (t\Jxy - \Lxy)\right]\cdot\Py^T,
\end{equation*}
where $\oslash$ denotes the Hadamard or entry-wise division. Hence,
\begin{equation*}
\V = \Px\cdot\left[\tilU \oslash (t\Jxy - \Lxy)\right]\cdot\Py^T,
\end{equation*}
which is the numerical approximation of $v(x,y) = (t - \Delta)^{-1}[u](x,y)$. Note that, since $\lambda_{x,i} \le 0$ and $\lambda_{y,j} \le 0$, $\tilV$, and hence $\V$, are uniquely defined for $t > 0$. In order to approximate numerically $(-\Delta)(t - \Delta)^{-1}[u](x,y) = (-\Delta)v(x,y)$, we discretize again $\Delta$ by $\Dxx$ and $\Dyy$, and apply them to $\V$:
\begin{align}
\label{e:V}
(-\Delta)v(\x,\y) & \approx -\Dxx\cdot\V - \V\cdot\Dyy^T = -\Px\cdot\Lx\cdot\Px^{-1}\cdot\V - \V\cdot(\Py^T)^{-1}\cdot\Ly\cdot\Py^T
	\cr
& = -\Px\cdot\Lx\cdot\Px^{-1}\cdot\V - \V\cdot(\Py^T)^{-1}\cdot\Ly\cdot\Py^T
	\cr
& = -\Px\cdot\Lx\cdot\left[\tilU \oslash (t\Jxy - \Lxy)\right]\cdot\Py^T - \Px\cdot\left[\tilU \oslash (t\Jxy - \Lxy)\right]\cdot\Ly\cdot\Py^T
	\cr
& = -\Px\cdot\left[\Lx\cdot\left[\tilU \oslash (t\Jxy - \Lxy)\right] + \left[\tilU \oslash (t\Jxy - \Lxy)\right]\cdot\Ly\right]\cdot\Py^T
	\cr
& = -\Px\cdot\left[\left[\Lxy\oslash (t\Jxy - \Lxy)\right]\odot\tilU\right]\cdot\Py^T,
\end{align}
where $[(-\Delta)v(\x,\y)]_{ij} = (-\Delta)v(x_i,y_j)$, and
\begin{equation}
\label{e:LLij}
\left[\Lxy\oslash (t\Jxy - \Lxy)\right]_{ij} = \frac{\lambda_{x,i} + \lambda_{y,j}}{t - \lambda_{x,i} - \lambda_{y,j}}.
\end{equation}
Introducing \eqref{e:V} into \eqref{e:balakrishnan}:
\begin{align}
\label{e:int0infmatrix0}
(-\Delta)^s u(\x,\y) & \approx \frac{\sin(\pi s)}{\pi}\int_0^\infty\left[-\Px\cdot\left[\left[\Lxy\oslash (t\Jxy - \Lxy)\right]\odot\tilU\right]\cdot\Py^T\right]\frac{dt}{t^{1-s}}
	\cr
& = -\frac{\sin(\pi s)}{\pi}\Px\cdot\left[\left(\int_0^\infty[\Lxy\oslash (t\Jxy - \Lxy)]\frac{dt}{t^{1-s}}\right)\odot\tilU\right]\cdot\Py^T,
\end{align}
where $[(-\Delta)^s u(\x,\y)]_{ij} = (-\Delta)^s u(x_i,y_j)$. Then, bearing in mind \eqref{e:LLij}, we have
\begin{equation}
\label{e:int0infmatrix}
\left[\int_0^\infty[\Lxy\oslash (t\Jxy - \Lxy)]\frac{dt}{t^{1-s}}\right]_{ij} = \int_0^\infty\left(\frac{\lambda_{x,i} + \lambda_{y,j}}{t - \lambda_{x,i} - \lambda_{y,j}}\right)\frac{dt}{t^{1-s}} = -\pi\csc(\pi s)(-\lambda_{x,i} - \lambda_{y,j})^s,
\end{equation}
where we have used Lemma~\ref{lem:I1} in Appendix~\ref{a:aux}. Therefore, \eqref{e:int0infmatrix0} becomes
\begin{align*}
(-\Delta)^s u(\x,\y) & \approx -\frac{\sin(\pi s)}{\pi}\Px\cdot\left(\left[-\pi\csc(\pi s)(-\Lxy)^{\odot s}\right]\odot\tilU\right)\cdot\Py^T
	\cr
& = \Px\cdot\left(\left[(-\Lxy)^{\odot s}\right]\odot\tilU\right)\cdot\Py^T,
\end{align*}
where $(-\Lxy)^{\odot s}$ denotes the Hadamard or entry-wise power of order $s$ of $-\Lxy$. Expanding $\tilU$, we get \eqref{e:approxfraclap2D}.

Let us approximate now the Bochner formula \eqref{e:bochner}. Reasoning as above,
\begin{align}
\label{e:etuu}
e^{t\Delta}[u](\x,\y) - u(\x,\y) & \approx e^{t\Dxx}\cdot\U\cdot e^{t\Dyy^T} - \U = \Px\cdot\left[e^{\odot t\Lxy}\odot\tilU - \tilU\right]\cdot\Py^T
	\cr
& = \Px\cdot\left[\left(e^{\odot t\Lxy} - \Jxy\right)\odot\tilU\right]\cdot\Py^T,
\end{align}
where $[e^{t\Delta}[u](\x,\y)]_{ij} \approx e^{t\Delta}[u](x_i,y_j)$, $e^{t\Dxx}$ and $e^{t\Dyy^T}$ are respectively the exponential matrices of $t\Dxx$ and $t\Dyy^T$, and $e^{\odot t\Lxy}$ is the Hadamard or entry-wise exponential of $t\Lxy$; hence,
\begin{equation}
	\label{e:eLLij}
	\left[e^{\odot t\Lxy} - \Jxy\right]_{ij} = e^{t(\lambda_{x,i} + \lambda_{y,j})} - 1.
\end{equation}
Introducing \eqref{e:etuu} into \eqref{e:bochner},
\begin{align}
\label{e:int0infmatrix1}
(-\Delta)^s u(\x,\y) & \approx \frac{1}{\Gamma(-s)}\int_0^\infty\left[\Px\cdot\left[\left(e^{\odot t\Lxy} - \Jxy\right)\odot\tilU\right]\cdot\Py^T\right]\frac{dt}{t^{1+s}}
	\cr
& = \frac{1}{\Gamma(-s)}\Px\cdot\left[\left(\int_0^\infty\left[e^{\odot t\Lxy} - \Jxy\right]\frac{dt}{t^{1+s}}\right)\odot\tilU\right]\cdot\Py^T.
\end{align}
Then, bearing in mind \eqref{e:eLLij},
\begin{equation*}
\left[\int_0^\infty\left[e^{\odot t\Lxy} - \Jxy\right]\frac{dt}{t^{1+s}}\right]_{ij} = \int_0^\infty\frac{e^{t(\lambda_{x,i} + \lambda_{y,j})} - 1}{t^{1+s}}dt = \Gamma(-s)(-\lambda_{x,i} - \lambda_{y,j})^s,
\end{equation*}
where we have used Lemma~\ref{lem:I2} in Appendix~\ref{a:aux}. Therefore, \eqref{e:int0infmatrix1} becomes
\begin{equation*}
(-\Delta)^s u(\x,\y) \approx \frac{1}{\Gamma(-s)}\Px\cdot\left[\Gamma(-s)(-\Lxy)^{\odot s}\odot\tilU\right]\cdot\Py^T = \Px\cdot\left[(-\Lxy)^{\odot s}\odot\tilU\right]\cdot\Py^T.
\end{equation*}
Expanding $\tilU$, we get again \eqref{e:approxfraclap2D}.

\end{proof}

\begin{remark} The quality of the approximation \eqref{e:approxfraclap2D} depends on the quality of the approximation of the partial derivatives by means of $\Dxx$ and $\Dyy$. In this regard, the differentiation matrices developed in Section~\ref{s:matrices} are particularly well-suited, because they can be efficiently generated, provide spectral accuracy, and the eigenvector matrices appearing in their diagonalization are well conditioned; however, other types of differentiation matrices might be possible, provided that they satisfy these properties. On the other hand, \eqref{e:approxfraclap2D} holds if $\Dxx$ and $\Dyy$ are scaled, i.e., if the actual differentiation matrices are $(1/L_x^2)\Dxx$ and $(1/L_y^2)\Dyy$, for $L_x > 0$ and $L_y > 0$. More precisely, if $\Dxx = \Px\cdot\diag(\lx)\cdot\Px^{-1}$ and $\Dyy = \Py\cdot\diag(\ly)\cdot\Py^{-1}$, then  $(1/L_x^2)\Dxx = \Px\cdot\diag((1/L_x^2)\lx)\cdot\Px^{-1}$ and $(1/L_y^2)\Dyy = \Py\cdot\diag((1/L_y^2)\ly)\cdot\Py^{-1}$, so it is enough to diagonalize $\Dxx$ and $\Dyy$, regardless of the values of the scaling factors $L_x$ and $L_y$. Then,  $\Lxy= (1/L_x^2)\lx\cdot\Jy^T + (1/L_y^2)\Jx\cdot\ly^T$, i.e., $[\Lxy]_{ij} = \lambda_{x,i}/L_x^2 + \lambda_{y,j}/L_y^2$. Therefore, in the particular case where the scaling factors are the same for $x$ and $y$, i.e., $L = L_x = L_y$,  \eqref{e:approxfraclap2D} becomes
\begin{equation*}
(-\Delta)^s u(\x,\y) \approx \frac{1}{L^{2s}}\Px\cdot\left[(-\Lxy)^{\odot s}\odot\left[\Px^{-1}\cdot\U\cdot(\Py^T)^{-1}\right]\right]\cdot\Py^T,
\end{equation*}
and the scaling does not intervene in the definition of $\Lxy= \lx\cdot\Jy^T + \Jx\cdot\ly^T$.

\end{remark}

It is straightforward to generalize Theorem~\ref{t:fraclap2D} to $n$ dimensions. In fact, the generalization to $n = 1$, which is particularly simple, and to $n\in\mathbb N$, can be stated as corollaries of this theorem. Let us state first the $n = 1$ case.

\begin{corollary}
	
\label{coro:1}
	
Let $s\in(0,1)$. Let $N_x \in\mathbb N$ denote the number of nodes $x_i$, and $\x = (x_1, \ldots, x_{N_x})^T$ the vector whose entries are these nodes. Let $\Dxx\in\R^{N_x\times N_x}$ be a diagonalizable and negative semidefinite differentiation matrix associated to $\x$ that approximates numerically $d^2/dx^2$. Let $\Dxx = \Px\cdot\Lx\cdot\Px^{-1}$, where $\Px\in\R^{N_x\times N_x}$ is the eigenvector matrix, and $\Lx = \diag(\lx)\in\R^{N_x\times N_x}$ is the diagonal eigenvalue matrix, with $\lx = (({\lambda_{x,1}, \ldots, \lambda_{x,N_x}}))^T$ being the vector whose entries are the eigenvalues $\lambda_{x,i} \le 0$ of $\Dxx$. Let $\U = (u(x_1), \ldots, u(x_{N_x}))^T\in\CC^{N_x}$. Then, the numerical approximation of both \eqref{e:balakrishnan} and \eqref{e:bochner} for $n = 1$ is given by
\begin{equation}
	\label{e:approxfraclap1D}
	(-\Delta)^s u(\x) \approx \Px\cdot\left[(-\boldsymbol\lambda_x)^{\odot s}\odot\left[\Px^{-1}\cdot\U\right]\right],
\end{equation}
where $(-\Delta)^su(\x) = ((-\Delta)^su(x_1), \ldots, (-\Delta)^su(x_{N_x}))^T$; and $(\cdot)^{\odot s}$ and $\odot$ denote respectively the Hadamard or entry-wise power of order $s$ and product, respectively.
\end{corollary}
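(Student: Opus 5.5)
The plan is to repeat, in one variable, the argument used in the proof of Theorem~\ref{t:fraclap2D}, with the matrix $\U$ now a column vector. One could also obtain the result as a literal special case of the theorem by taking $N_y = 1$ and $\Dyy = (0)$, so that $\Py = (1)$, $\ly = (0)$ and $\Lxy = \lx$. However, the theorem assumes that $\Dyy$ is a differentiation matrix approximating $\partial_{yy}$, so I prefer to redo the computation directly; it is short.

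\textbf{Balakrishnan formula.} First fix $t>0$ and discretize $(t-\Delta)v = u$ as $t\V - \Dxx\cdot\V = \U$, with $\V\in\CC^{N_x}$. Substituting $\Dxx = \Px\cdot\Lx\cdot\Px^{-1}$ and setting $\tilU = \Px^{-1}\cdot\U$ and $\tilV = \Px^{-1}\cdot\V$, the system becomes diagonal:
\begin{equation*}
(t - \lambda_{x,i})\,\tilV_i = \tilU_i .
\end{equation*}
Because $\lambda_{x,i}\le 0$ and $t>0$, we have $t - \lambda_{x,i} > 0$, so $\tilV = \tilU\oslash(t\Jx - \lx)$ is uniquely defined.

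Next, apply $-\Dxx$ to $\V$:
\begin{equation*}
-\Dxx\cdot\V = -\Px\cdot\left[\left(\lx\oslash(t\Jx-\lx)\right)\odot\tilU\right].
\end{equation*}
Substitute this into \eqref{e:balakrishnan}. The integral in $t$ commutes with the fixed linear maps $\Px$ and $\odot\tilU$, so it acts entrywise on the scalar functions $\lambda_{x,i}/(t-\lambda_{x,i})$. Lemma~\ref{lem:I1}, taken with the single eigenvalue $\lambda_{x,i}$ in place of $\lambda_{x,i}+\lambda_{y,j}$, gives
\begin{equation*}
\int_0^\infty \frac{\lambda_{x,i}}{t-\lambda_{x,i}}\,\frac{dt}{t^{1-s}} = -\pi\csc(\pi s)\,(-\lambda_{x,i})^s .
\end{equation*}
Multiplying by the prefactor $-\sin(\pi s)/\pi$ cancels the constants and yields \eqref{e:approxfraclap1D}.

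\textbf{Bochner formula.} Here $e^{t\Dxx}\cdot\U - \U = \Px\cdot\left[(e^{\odot t\lx} - \Jx)\odot\tilU\right]$. Lemma~\ref{lem:I2} gives
\begin{equation*}
\int_0^\infty \frac{e^{t\lambda_{x,i}} - 1}{t^{1+s}}\,dt = \Gamma(-s)\,(-\lambda_{x,i})^s ,
\end{equation*}
so after dividing by $\Gamma(-s)$ we obtain the same expression \eqref{e:approxfraclap1D}.

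\textbf{Main obstacle.} The only delicate point is a zero eigenvalue $\lambda_{x,i}=0$, which does occur for the matrices of Section~\ref{s:matrices}. In that case both integrands vanish identically, so each integral equals $0$. This agrees with $(-\lambda_{x,i})^s = 0$ for $s>0$, provided the appendix lemmas are read as including the degenerate case, or that case is checked directly as here.

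The other points need only a brief remark. Interchanging the integral with the finite matrix products is immediate, since there are finitely many scalar integrals. For $\lambda_{x,i}<0$ each integral converges: near $t=0$ the integrand behaves like $t^{s-1}$ in the Balakrishnan case and like $t^{-s}$ in the Bochner case, and both are integrable at the upper limit, using $0<s<1$.
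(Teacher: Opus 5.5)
Your proof is correct, but it is organized differently from the paper's. The paper proves the corollary in one line by doing exactly what you mention and then set aside. It takes $N_y=1$, $\Dyy=(0)$, $\Py=(1)$, $\ly=(0)$ in Theorem~\ref{t:fraclap2D}, so that $\Lxy=\lx$ and both $(\Py^T)^{-1}$ and $\Py^T$ reduce to the scalar $1$; then \eqref{e:approxfraclap2D} collapses to \eqref{e:approxfraclap1D}.

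Your worry about the hypothesis is not a real obstruction. The proof of the theorem only uses that $\Dyy$ is diagonalizable with nonpositive eigenvalues, and $(0)$ satisfies both. On a single $y$-node, the zero matrix can also be read as a consistent discretization of $\partial_{yy}$ for a function that does not depend on $y$.

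Your direct route is the theorem's computation with one index instead of two. It buys self-containedness and, more usefully, an explicit treatment of the zero eigenvalue. Lemmas~\ref{lem:I1} and~\ref{lem:I2} are stated only for $\mu<0$. Yet the theorem's proof, and hence the paper's reduction, applies them to entries that vanish, namely the eigenvalue the paper deliberately rounds to zero. Your observation that both integrands vanish identically and that $0^s=0$ for $s>0$ closes this small gap, which the paper leaves implicit. Your convergence check of the scalar integrals at both endpoints is likewise something the paper does not spell out.

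The paper's route, in turn, buys brevity and makes plain that the $n=1$ case is literally contained in the two-dimensional theorem.
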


\begin{proof}
Since the variable $y$ does not intervene, the proof follows trivially from the statement of Theorem~\ref{t:fraclap2D}, by taking $N_y = 1$, and defining the one-element matrices $\Dyy = (0)$, $\Py = (1)$ and $\Ly = (0)$, and one-element vectors $\y = (0)$ and $\ly = (0)$.
\end{proof}

\begin{remark}
Since $\Lx = \diag(\boldsymbol\lambda_x)$, it is equivalent to write
\begin{equation*}
	(-\Delta)^s u(\x) \approx \Px\cdot(-\Lx)^{\odot s}\cdot\Px^{-1}\cdot\U,
\end{equation*}
although \eqref{e:approxfraclap1D} is faster from an implementational point of view.
\end{remark}

On the other hand, in order to generalize \eqref{e:approxfraclap2D} to $n\in\mathbb N$ dimensions, we need to adjust the notation. More precisely, $\vx = (x_1, \ldots, x_n)$, and $\Dxj$, for $1 \le j\le n$, is the differentiation matrix used to approximate $\partial_{x_jx_j}$, which can be diagonalized as $\Dxj = \PP_j\cdot\Lj\cdot\PP_j^{-1}$, where $\PP_j$ is the eigenvector matrix, and $\Lj= \diag(\lj)$ is the diagonal eigenvalue matrix, with $\lj = ({\lambda_{j,1}, \ldots, \lambda_{j,N_j}})^T$ the vector containing the eigenvalue of $\Dxj$. Each coordinate $x_j$ is discretized in $N_j$ nodes that we store in a vector $\x_j = (x_{j,1}, \ldots, x_{j,N_j})^T$. Then, with some abuse of notation, $\U \approx u(\x_1, \ldots, \x_n)$, where $\U\in\CC^{N_1\times\ldots\times N_n}$, and $[\U]_{i_1, \ldots, i_n} \approx u(x_{1,i_1}, \ldots, x_{n,i_n})$, for $1 \le i_j \le N_j$.

In order to approximate the second-order partial derivatives of $u(x_1, \ldots, x_n)$, we observe that, in the two-dimensional case,
\begin{align}
\label{e:dxx}
\partial_{xx}u(x_i, y_j) & = [\Dxx\cdot\U]_{ij} = \sum_{k = 1}^{N_x}[\Dxx]_{ik}[\U]_{kj},
	\\
\label{e:dyy}
\partial_{yy}u(x_i, y_j) & = [\U\cdot\Dyy^T]_{ij} = \sum_{k = 1}^{N_y}[\U]_{ik}[\Dyy^T]_{kj} = \sum_{k = 1}^{N_y}[\Dyy]_{jk}[\U]_{ik},
\end{align}
i.e., in $\Dxx\cdot\U$, we perform the sum along the first dimension of $\U$, whereas in $\U\cdot\Dyy^T$, we perform the sum along the second dimension of $\U$. In order to generalize \eqref{e:dxx} and \eqref{e:dyy}, we introduce the operator $\square_j$ that denotes the product between a matrix $\mathbf A\in\CC^{M\times N_j}$ and an $n$-dimensional array $\U\in\CC^{N_1\times\cdots\times N_n}$ along the $j$th dimension, i.e., $\mathbf A\square_j\U\in\CC^{N_1\times \ldots \times N_{j-1}\times M\times N_{j+1}\times \ldots N_n}$:
\begin{equation}
\label{e:squarej}
[\mathbf A\square_j\mathbf U ]_{i_1\ldots i_n} = \sum_{k = 1}^{N_j}[\mathbf A]_{i_jk}\cdot[\U]_{i_1\ldots i_{j-1}k i_{j+1}\ldots i_n}.
\end{equation}
Therefore, given $\U\in \CC^{N_1\times\ldots\times N_n}$, $\square_j$ enables us to generalize \eqref{e:dxx} and \eqref{e:dyy} to the $n$-dimensional case:
\begin{equation*} 
\partial_{x_j, x_j}u(x_{i_1}, \ldots, x_{i_n}) = [\Dxj\square_j\mathbf U ]_{i_1\ldots i_n} = \sum_{k = 1}^{N_j}[\Dxj]_{i_jk}\cdot[\U]_{i_1\ldots i_{j-1}k i_{j+1}\ldots i_N}, \quad 1 \le j \le n.
\end{equation*} 
Note that, with this notation, in the two-dimensional case, we have the equivalences $\Dxx\square_1\U\equiv \Dxx\cdot\U$ and $\Dyy\square_2\U\equiv \U\cdot\Dyy^T$.

Bearing in mind the previous arguments, we can adapt Theorem~\ref{t:fraclap2D} to the $n$-dimensional case by means of the following corollary.

\begin{corollary}
\label{coro:n}
Let $s\in(0,1)$. Let $\vx = (x_1, \ldots, x_n)\in\R^n$, with $n\in\mathbb N$. Let $N_j\in\mathbb N$, with $1\le j\le n$, denote the number of nodes corresponding to the $x_j$-axis, and $\x_j= (x_{j,1}, \ldots, x_{j,N_j})^T$ the vector whose entries are these nodes. Let $\Dxj\in\R^{N_j\times N_j}$ be a diagonalizable and negative semidefinite differentiation matrix associated to $\x_j$ that approximates numerically $\partial_{x_jx_j}$. Let $\Dxj = \PP_j\cdot\Lj\cdot\PP_j^{-1}$, where $\PP_j\in\R^{N_j\times N_j}$ is the eigenvector matrix, and $\Lj = \diag(\lj)\in\R^{N_j\times N_j}$ is the diagonal eigenvalue matrix, with $\lj = ({\lambda_{j,1}, \ldots, \lambda_{j,N_j}})^T$ being the vector whose entries are the eigenvalues $\lambda_{j,i} \le 0$. Let $\U\in\CC^{N_1\times \ldots \times N_n}$ be the $n$-dimensional array such that $[\U]_{i_1\ldots i_n} = u(x_{1,i_1}, \ldots, x_{n,i_n})$, for $1\le i_j\le N_j$. Then, the numerical approximation of both \eqref{e:balakrishnan} and \eqref{e:bochner} for $n\in\mathbb N$ is given by
\begin{equation}
	\label{e:approxfraclapnD}
	(-\Delta)^s u(\x_1,\ldots,\x_n) \approx \PP_n\square_n[\ldots\square_2[\PP_1\square_1[(-\LL)^{\odot s}\odot\tilde\U]]],
\end{equation}
where
\begin{equation*}
\tilde\U = \PP_n^{-1}\square_n[\ldots\square_2[\PP_1^{-1}\square_1\U]].
\end{equation*}
Moreover, $[(-\Delta)^s u(\x_1, \ldots, \x_n)]_{i_1\ldots i_n} = (-\Delta)^s u(x_{1,i_1},\ldots, x_{n,i_n})$, for $1\le i_j\le N_j$; $\LL\in\R^{N_1\times\ldots\times N_n}$ is the $n$-dimensional array such that
$[\LL]_{i_1\ldots i_n} = \lambda_{1,i_1} + \ldots + \lambda_{n,i_n}$; $\square_j$ denotes the sum along the $j$th dimension, as defined in \eqref{e:squarej}, and $(\cdot)^{\odot s}$ and $\odot$ denote respectively the Hadamard or entry-wise power of $s$ and product, respectively.

\end{corollary}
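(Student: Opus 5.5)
The plan is to repeat the proof of Theorem~\ref{t:fraclap2D} verbatim, with the matrix products $\Dxx\cdot\V$ and $\V\cdot\Dyy^T$ replaced by the mode products $\Dxj\square_j\V$. First I will record three elementary properties of $\square_j$, each following directly from \eqref{e:squarej}. (i) For $j\neq k$ the mode products commute: $\mathbf A\square_j[\mathbf B\square_k\U]=\mathbf B\square_k[\mathbf A\square_j\U]$. (ii) Along the same mode they compose: $\mathbf A\square_j[\mathbf B\square_j\U]=(\mathbf A\cdot\mathbf B)\square_j\U$. (iii) A diagonal matrix acts entrywise: $\diag(\lj)\square_j\U=\LL^{(j)}\odot\U$, where $[\LL^{(j)}]_{i_1\ldots i_n}=\lambda_{j,i_{j}}$. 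These are the $n$-dimensional analogues of associativity of matrix multiplication that the two-dimensional proof uses tacitly.

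Next I will discretize $(t-\Delta)v=u$ as $t\V-\sum_{j=1}^n\Dxj\square_j\V=\U$. I will define $\mathcal P^{-1}[\cdot]=\PP_n^{-1}\square_n[\ldots\square_2[\PP_1^{-1}\square_1[\cdot]]]$ and let $\mathcal P$ denote the analogous operator built from the $\PP_j$. By (i) and (ii), $\mathcal P$ and $\mathcal P^{-1}$ are mutually inverse. Applying $\mathcal P^{-1}$ and using $\PP_j^{-1}\Dxj=\Lj\PP_j^{-1}$ together with (i)--(iii), each term becomes $\mathcal P^{-1}[\Dxj\square_j\V]=\LL^{(j)}\odot\tilde\V$. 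Summing over $j$, with $\LL=\sum_j\LL^{(j)}$, gives $(t\mathbf 1-\LL)\odot\tilde\V=\tilde\U$. Since all $\lambda_{j,i}\le0$, every entry of $t\mathbf 1-\LL$ is positive for $t>0$, so $\tilde\V=\tilde\U\oslash(t\mathbf 1-\LL)$ is well defined.

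Applying $-\sum_j\Dxj\square_j$ to $\V$ in the same way gives $(-\Delta)v\approx-\mathcal P[(\LL\oslash(t\mathbf 1-\LL))\odot\tilde\U]$. Because $\mathcal P$ is linear and independent of $t$, it can be pulled out of the integral in \eqref{e:balakrishnan}. The remaining integral is scalar and entrywise, and Lemma~\ref{lem:I1} evaluates it exactly as in \eqref{e:int0infmatrix}, with $\lambda_{x,i}+\lambda_{y,j}$ replaced by $[\LL]_{i_1\ldots i_n}$; this yields \eqref{e:approxfraclapnD}.

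For the Bochner formula \eqref{e:bochner}, I will use $e^{t\Delta}\approx$ the composition of the operators $e^{t\Dxj}\square_j$. These commute by (i), and $e^{t\Dxj}=\PP_j e^{t\Lj}\PP_j^{-1}$, so the discrete semigroup becomes $\mathcal P[e^{\odot t\LL}\odot\tilde\U]$, where I use (iii) and the fact that $\prod_j e^{t\lambda_{j,i_j}}=e^{t[\LL]_{i_1\ldots i_n}}$. Lemma~\ref{lem:I2} then finishes this case. Alternatively, I could simply note that the $n=2$ argument only ever uses properties (i)--(iii).

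The only genuinely new point, and the step I expect to need the most care, is verifying the commutation and diagonal-action rules for $\square_j$. Once these index identities are checked from \eqref{e:squarej}, everything reduces to the scalar integrals already handled in Appendix~\ref{a:aux}.
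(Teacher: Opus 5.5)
Your proposal is correct and is essentially the paper's own argument: the paper proves the corollary by declaring it identical to the proof of Theorem~\ref{t:fraclap2D}, and you carry out exactly that repetition. You replace $\Dxx\cdot\V$ and $\V\cdot\Dyy^T$ by the mode products $\Dxj\square_j\V$, and you also make explicit the commutation, composition and diagonal-action identities for $\square_j$ that the two-dimensional proof uses tacitly.
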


\begin{proof} Identical to that of Theorem~\ref{t:fraclap2D}.
	
\end{proof}

\begin{remark}

In Corollary~\ref{coro:1} and Corollary~\ref{coro:n}, we have considered differentiation matrices without scaling, but it is straightforward to work with $\Dxj / L_j^2$, as explained in the remark after Theorem~\ref{t:fraclap2D}, for $n = 2$. Recall that $\Dxj = \PP_j\cdot\diag((1/L_j^2)\lj)\cdot\PP_j^{-1}$, so it is enough to diagonalize each differentiation matrix $\Dxj$ once, and update $\LL$ accordingly. Moreover, if all the matrices are scaled by the same factor $L^2$, then it is enough to divide the numerical approximation of $(-\Delta)^su(\vec{x})$ by $L^{2s}$.

\end{remark}

\subsection{Implementation in MATLAB}

In what regards the numerical implementation in MATLAB, we generate the differentiation matrices by means of the function \verb|create_Dx_Dxx| shown in Listing~\ref{code:createDxDxx}, and diagonalize them by means of the function \verb|eig|. With respect to $\square_j$, MATLAB does not implement natively the product of a matrix by a multidimensional array, so we use a minor generalization of the function \verb|multND| developed in \cite{cuestadelahoz2024}, that we offer in Listing~\ref{code:multND}, and which allows the multiplication of a vector or a matrix (not necessarily square) by an $n$-dimensional array along the $j$th dimension of the latter.

\begin{lstlisting}[label=code:multND, language=MATLAB, basicstyle=\footnotesize, caption = {Function \texttt{multND}, to multiply a matrix (not necessarily square) by a multidimensional array along a given dimension.}]
% compute $\mathbf A\square_j\mathbf X$ in MATLAB R2020b or newer
function X=multND(A,X,j)
if isscalar(A) % test whether A is a scalar
    X=A*X; % trivial case
    return
end
sizeA=size(A); % size of A
if length(sizeA) ~= 2, error('A must be a vector or a matrix!'); end
sizeX=size(X); % size of X
if sizeA(2) ~= sizeX(j), error('The dimensions of A and X do not match!'); end    
X=reshape(X,prod(sizeX(1:j-1)),sizeX(j),prod(sizeX(j+1:end))); % reshape X
X=pagemtimes(X,A.'); % compute the product
X=reshape(X,[sizeX(1:j-1),sizeA(1),sizeX(j+1:end)]); % reshape the product
\end{lstlisting}

Note that \verb|multND|  makes use of the function \verb|pagemtimes|, which was introduced in MATLAB R2020b. Therefore, in Listing~\ref{code:multNDalt}, we also offer a minor generalization of the function \verb|multNDalt| in \cite{cuestadelahoz2024}, which is slightly less efficient, but does not require \verb|pagemtimes|, and, hence, it can be executed in older versions of MATLAB.

\begin{lstlisting}[label=code:multNDalt, language=MATLAB, basicstyle=\footnotesize, caption = {Function \texttt{multNDalt}, to multiply a matrix (not necessarily square) by a multidimensional array along a given dimension. It works in older MATLAB versions.}]
% compute $\mathbf A\square_j\mathbf X$ in older MATLAB versions
function X=multNDalt(A,X,j)
if isscalar(A) % test whether A is a scalar
    X=A*X; % trivial case
    return
end
sizeA=size(A); % size of A
if length(sizeA) ~= 2, error('A must be a vector or a matrix!'); end
sizeX=size(X); % size of X
if sizeA(2) ~= sizeX(j), error('The dimensions of A and X do not match!'); end  
if j==1 % multiplication along the 1st dimension is simpler
    X=reshape(X,sizeX(j),[]); % reshape X
    X=A*X; % compute the product
    sizeX(1) = sizeA(1); % size of X, after multiplication
    X=reshape(X,sizeX); % reshape the product
else
    permutation=[j 2:j-1 1 j+1:length(sizeX)]; % permutation of the dimensions
    X=permute(X,permutation); % swap the first and jth dimensions
    sizeXpermuted=size(X); % size of X, after permutation
    sizeXpermuted(1)=sizeA(1); % size of X, after multiplication
    X=reshape(X,sizeX(j),[]); % reshape X
    X=A*X; % compute the product
    X=reshape(X,sizeXpermuted); % reshape the product
    X=permute(X,permutation); % swap the first and jth dimensions
end
\end{lstlisting}

In order to generate the $n$-dimensional mesh, it is possible to use always the function \verb|ndgrid|. For instance, if $n = 4$ and the column arrays \verb|x1|, \verb|x2|,  \verb|x3| and \verb|x4| correspond to $\x_1$, $\x_2$, $\x_3$ and $\x_4$, respectively, we type \verb|[X1,X2,X3,X4]=ndgrid(x1,x2,x3,x4);|. However, the MATLAB syntax makes often possible not to use \verb|ndgrid| at all: For example, if we want to compute \verb|X1+X2+X3+X4|, this is equivalent to typing \verb|x1+x2.'+permute(x3,[3 2 1])+permute(x4,[4 2 3 1]);|. Note that, instead of \verb|x2.'| it is possible to type \verb|permute(x2,[2,1]);|, or even \verb|permute(x2,[2,1,3]);| or \verb|permute(x2,[2,1,3,4]);|, etc. We have used this idea, which makes it possible to implement a code that works for any $n\in\mathbb N$.

Bearing in mind the previous arguments, we offer in Listing~\ref{code:fractionalLaplacianND} a code that shows how to approximate numerically, $(-\Delta)^se^{-x_1^2 - \ldots - x_n^2}$, whose exact expression (see \cite{sheng2020}) is given by
\begin{equation}
\label{e:Deltasexp}
	(-\Delta)^se^{-x_1^2 - \ldots - x_n^2} = \frac{2^{2s}\Gamma(s+n/2)}{\Gamma(n/2)}{}_1F_1\left(s+\frac n2; \frac n2; -x_1^2 - \ldots - x_n^2\right), \quad s\ge 0,\ n\in\mathbb N,
\end{equation}
We have taken $s = 0.13$, $n = 4$, $N_1= 39$, $N_2= 40$, $N_3= 41$, $N_4= 42$,  and $L_1 = 4.7$, $L_2 = 4.8$, $L_3 = 4.9$, $L_4 = 5$. The numerical error in max norm is $2.8839\times10^{-10}$. In what regards the execution time, while the code takes merely 0.25 seconds to execute, the computation of the exact result \eqref{e:Deltasexp}, which requires the evaluation of the function ${}_1F_1$, needs 4032.43 seconds. Note that the matrices $\PP_j^{-1}$ have to be computed only once, and it is safe to use them, because the matrices $\PP_j$ are well-conditioned.

\begin{lstlisting}[label=code:fractionalLaplacianND, language=MATLAB, basicstyle=\footnotesize, caption = {Program \texttt{fractionalLaplacianND.m}, that approximates numerically the fractional Laplacian of a function in $n$ dimensions.}]
% Approximate numerically the fractional Laplacian of $\exp(-x_1^2-\ldots-x_n^2)$
clear
tic
s=0.13; % $s$
Njall=[39,40,41,42]; % Number of nodes $\{N_1, \ldots, N_n\}$
Ljall=[4.7,4.8,4.9,5]; % Scale factors $\{L_1, \ldots, L_n\}$
if length(Njall)~=length(Ljall), error('Error in the data!'); end
n=length(Njall); % Number of dimensions $n$
Pjall=cell(1,n); % Eigenvector matrices $\mathbf P_j$
invPjall=cell(1,n); % $\mathbf P_j^{-1}$
for j=1:n
    Dxjxj=create_Dx_Dxx(Njall(j)); % $\mathbf D_{x_jx_j}$
    [Pj,Lambdaj]=eig(Dxjxj); % $\mathbf P_j$ and $\boldsymbol\Lambda_j$
    lambdaj=diag(Lambdaj)/Ljall(j)^2; % $\boldsymbol\lambda_j$
    aux=find(min(abs(lambdaj))==abs(lambdaj));
    lambdaj(aux)=0; % Set to zero the eigenvalue with the smallest modulus.
    Pj(:,aux)=1/sqrt(Njall(j)); % Eigenvector associated to $\lambda_j=0$.
    Pjall{j}=Pj; % Store $\mathbf P_j$.
    invPjall{j}=inv(Pj); % Store $\mathbf P_j^{-1}$.
    xj=Ljall(j)*cot(pi*(1/2+(0:Njall(j)-1)).'/ Njall(j)); % $\mathbf x_j$
    if j==1
        sumX2=xj.^2; % $x_1^2+\ldots+x_n^2$
        Lambda=lambdaj; % $\boldsymbol\Lambda$
    else
        P=[j 2:j-1 1 j+1:n]; % Permute the first and $m$th dimensions
        Lambda=Lambda+permute(lambdaj,P);
        sumX2=sumX2+permute(xj.^2,P);
    end
end
U=exp(-sumX2); % $u=\exp(-x_1^2-\ldots-x_n^2)$
tildeU=U;
for j=1:n, tildeU=multND(invPjall{j},tildeU,j); end
fraclap_num=(-Lambda).^s.*tildeU; % Numerical approximation of $(-\Delta)^su(\vec x)$
for j=1:n, fraclap_num=multND(Pjall{j},fraclap_num,j); end
% Exact value of the fractional Laplacian $(-\Delta)^su$
toc, tic
fraclap=2^(2*s)*gamma(s+n/2)/gamma(n/2)*hypergeom(s+n/2,n/2,-sumX2);
toc
norm(fraclap(:)-fraclap_num(:),inf) % Error in max-norm
\end{lstlisting}

\section{Numerical approximation of the fractional $p$-Laplacian in $n$ dimensions}

\label{s:fracplap}

With little modifications, Theorem~\ref{t:fraclap2D} and its corollaries allow us to develop a numerical method for the fractional $p$-Laplacian in $n$ dimensions, for which both \eqref{e:balakrishnanp} (which generalizes \eqref{e:balakrishnan}), and \eqref{e:bochnerp} (which generalizes \eqref{e:bochner}) can be used. Note that the fractional $p$-Laplacian exhibits two key differences with respect to the fractional Laplacian: when $p\neq2$, it is no longer a linear operator, and the function $\Phi_p$ intervening in its definition in \eqref{e:balakrishnanp} and \eqref{e:bochnerp} is not $C^\infty$. Moreover, if we take $\Phi_p(t) = |t|^{p-2}t$, as in \eqref{e:Phip}, it cannot be directly evaluated at $t = 0$, when $p < 2$. Therefore, in this paper, we use the following equivalent definition, based on the complex sign function, which enables us to consider directly $p\ge 1$:
\begin{equation*}
\Phi_p(t) = \sgn(t)|t|^{p-1}, \text{ where } \sgn(t) = 
\left\{
\begin{aligned}
	& \frac{t}{|t|}, & & t\neq0,
	\cr
	& 0, & & t = 0.
\end{aligned}
\right.
\end{equation*}
On the other hand, \eqref{e:balakrishnanp} and \eqref{e:bochnerp} need some clarification. Indeed, in $\Delta(t - \Delta)^{-1}\left[\Phi_p(u(\vx) - u(\cdot))\right](\vx)$ and $e^{t\Delta}\left[\Phi_p(u(\vx) - u(\cdot))\right](\vx)$, which appear respectively in these formulas, we fix $\vx$ and apply the operators $\Delta(t - \Delta)^{-1}$ and $e^{t\Delta}$ to the variables denoted by a dot in $\Phi_p(u(\vx) - u(\cdot))$ (which avoids having to write them explicitly), then evaluate the result at the same value of $\vx$. Therefore, we can understand the numerical approximation of the fractional $p$-Laplacian of a function $u\in\CC^{N_1\times\ldots\times N_n}$ as approximating numerically the fractional Laplacian of $\Phi_p(u(\vx) - u(\cdot))$ several times (i.e., once per each value of $\vx$), having replaced $s$ by $sp/2$ and multiplying the result by a constant, that we denote $C(n, s, p)$.

In order to determine $C(n, s, p)$, we observe that we have $\Delta(t - \Delta)^{-1}$ in the integrand in \eqref{e:balakrishnanp} (notation that we have kept from \cite{teso2021}), but $(-\Delta)(t - \Delta)^{-1}$ in \eqref{e:balakrishnan}. Hence, $C(n, s, p)$ comes from dividing the constant $-C_4(n, s, p)$ in \eqref{e:balakrishnanp} by $\sin(\pi sp/2)/\pi$ (which is the constant $\sin(\pi s)/\pi$ in \eqref{e:balakrishnan}, but having substituted $s$ by $sp/2$):
\begin{align}
\label{e:Cnsp}
C(n, s, p) & = \frac{-C_4(n, s, p)}{\sin(\pi sp/2)/\pi} = -\frac{\pi}{\sin(\pi sp/2)}\frac{\pi^{n/2}}{2^{sp}\Gamma((2+sp)/2)\Gamma((n+sp)/2)}C_1(n,s,p)
	\cr
& = -\frac{\pi^{n/2+1}}{\sin(\pi sp/2)2^{sp}\Gamma((2+sp)/2)\Gamma((n+sp)/2)}\frac{sp(1-s)2^{2s-2}}{\pi^{(n-1)/2}}\frac{\Gamma((n+sp)/2)}{\Gamma((p+1)/2)\Gamma(2-s)}
	\cr
& = -\frac{\pi^{1/2}2^{2s-sp-1}\Gamma(1-sp/2)}{\Gamma((p+1)/2)\Gamma(1-s)},
\end{align}
where we have used that $\Gamma(sp/2)\Gamma(1-sp/2) = \pi / \sin(\pi sp/2)$. Note that, if $p = 2$,
\begin{equation}
\label{e:Cnsp2}
C(n, s, 2) = \frac{-C_4(n, s, 2)}{\sin(\pi s)/\pi} = -\frac{\pi^{1/2}2^{2s-2s-1}\Gamma(1-2s/2)}{\Gamma((2+1)/2)\Gamma(1-s)} = -\frac{\pi^{1/2}2^{-1}}{\Gamma(3/2)} = -1.
\end{equation}
As a consequence, when $p = 2$, bearing in mind that $\Phi_2(t) = t$, \eqref{e:balakrishnanp} becomes
\begin{align}
(-\Delta)_2^s u(\vx) & = C_4(n,s,2)\int_0^\infty\Delta(t - \Delta)^{-1}\left[\Phi_2(u(\vx) - u(\cdot))\right](\vx)\frac{dt}{t^{1-s}}
	\cr
& = \frac{\sin(\pi s)}{\pi}\int_0^\infty\Delta(t - \Delta)^{-1}\left[u(\vx) - u(\cdot)\right](\vx)\frac{dt}{t^{1-s}}
	\cr
& = \frac{\sin(\pi s)}{\pi}\int_0^\infty\Delta(t - \Delta)^{-1}\left[- u(\cdot)\right](\vx)\frac{dt}{t^{1-s}}
	\cr
& = \frac{\sin(\pi s)}{\pi}\int_0^\infty(-\Delta)(t - \Delta)^{-1}u(\vx)\frac{dt}{t^{1-s}} = (-\Delta)^s u(\vx),
\end{align}
i.e., we recover \eqref{e:balakrishnan}. Hence, \eqref{e:balakrishnan} is equivalent to \eqref{e:balakrishnanp}, with $p = 2$.

Likewise, we observe that the constant of \eqref{e:bochnerp} is $C_2(n, s, p)$, whereas that of \eqref{e:bochner} is $1 / \Gamma(-s)$. Then, dividing the first constant by the second one (where we have replaced $s$ by $sp/2$), we recover the constant $C(n, s, p)$ in \eqref{e:Cnsp}:
\begin{align}
\label{e:C2Gamma}
\frac{C_2(n, s, p)}{1 / \Gamma(-sp/2)} & = \Gamma(-sp/2)\frac{\pi^{n/2}}{2^{sp}\Gamma((n+sp)/2)}C_1(n,s,p)
	\cr
& = \frac{\pi^{n/2}\Gamma(-sp/2)}{2^{sp}\Gamma((n+sp)/2)}\frac{sp(1-s)2^{2s-2}}{\pi^{(n-1)/2}}\frac{\Gamma((n+sp)/2)}{\Gamma((p+1)/2)\Gamma(2-s)} = -\frac{\pi^{1/2}2^{2s-sp-1}\Gamma(1-sp/2)}{\Gamma((p+1)/2)\Gamma(1-s)}
	\cr
& = C(n, s, p),
\end{align}
where we have used again that $\Gamma(sp/2)\Gamma(1-sp/2) = \pi / \sin(\pi sp/2)$, and also that $\Gamma(s)\Gamma(1-s) = \pi / \sin(\pi s)$. Note that, when $p = 2$, from \eqref{e:C2Gamma} and \eqref{e:Cnsp2},
\begin{equation*}
C(n, s, 2) = \frac{C_2(n, s, 2)}{1 / \Gamma(-s)} = -1 \Longrightarrow C_2(n, s, 2) = -\frac{1}{\Gamma(-s)}.
\end{equation*}
Therefore, when $p = 2$, \eqref{e:bochnerp} becomes
\begin{align*}
(-\Delta)_2^s u(\vx) & = C_2(n,s,2)\int_0^\infty e^{t\Delta}\left[\Phi_2(u(\vx) - u(\cdot))\right](\vx)\frac{dt}{t^{1+s}} = -\frac{1}{\Gamma(-s)}\int_0^\infty e^{t\Delta}\left[u(\vx) - u(\cdot)\right](\vx)\frac{dt}{t^{1+s}}
	\cr
& = -\frac{1}{\Gamma(-s)}\int_0^\infty \left(u(\vx) - e^{t\Delta}[- u(\cdot)](\vx)\right)\frac{dt}{t^{1+s}} = \frac{1}{\Gamma(-s)}\int_0^\infty(e^{t\Delta}[u](\vx) - u(\vx))\frac{dt}{t^{1+s}}
	\cr
& = (-\Delta)^s u(\vx) 
\end{align*}
i.e., we recover \eqref{e:bochner}. Hence, \eqref{e:bochner} is equivalent to \eqref{e:bochnerp}, with $p = 2$.

Bearing in mind the previous arguments, we propose the following theorem.
\begin{theorem}
	\label{teo:fracplap}
	Let $p \ge 1$, $s\in(0,1)$ and $sp/2\not\in\mathbb N$. Let $\vx = (x_1, \ldots, x_n)\in\R^n$, with $n\in\mathbb N$. Let $N_j$, with $1\le j\le n$, denote the number of nodes corresponding to the $x_j$-axis, and $\x_j= (x_{j,1}, \ldots, x_{j,N_j})^T$ the vector whose entries are these nodes. Let $\Dxj\in\R^{N_j\times N_j}$ be a diagonalizable and negative semidefinite differentiation matrix associated to $\x_j$ that approximates numerically $\partial_{x_jx_j}$. Let $\Dxj = \PP_j\cdot\Lj\cdot\PP_j^{-1}$, where $\PP_j\in\R^{N_j\times N_j}$ is the eigenvector matrix, and $\Lj = \diag(\lj)\in\R^{N_j\times N_j}$ is the diagonal eigenvalue matrix, with $\lj = ({\lambda_{j,1}, \ldots, \lambda_{j,N_j}})^T$ being the vector whose entries are the eigenvalues $\lambda_{j,i} \le 0$. Let $\U\in\CC^{N_1\times \ldots \times N_n}$ be the $n$-dimensional array such that $[\U]_{i_1\ldots i_n} = u(x_{1,i_1}, \ldots, x_{n,i_n})$, for $1\le i_j\le N_j$. Then, the numerical approximation of both \eqref{e:balakrishnanp} and \eqref{e:bochnerp} for $n\in\mathbb N$ is given by
	\begin{equation}
		\label{e:approxfracplapnD}
		(-\Delta)_p^s u(x_{1,i_1},\ldots, x_{n,i_n}) \approx C(n,s,p)\PP_n(i_n, :)\square_n[\ldots\square_2[\PP_1(i_1, :)\square_1[(-\LL)^{\odot sp/2}\odot\tilde\U(i_1, \ldots, i_n)]]],
	\end{equation}
	for $1\le i_j\le N_j$, where $\PP_j(i_j, :)$ denotes the $i_j$th row of $\PP_j$, and
	\begin{equation*}
	\tilde\U(i_1, \ldots, i_n) = \PP_n^{-1}\square_n[\ldots\square_2[\PP_1^{-1}\square_1\Phi_p([\U]_{i_1\ldots i_n}\Jn - \U)]],
\end{equation*}
	where $\Phi_p(\cdot)$ is evaluated point-wise. Moreover, $\LL\in\R^{N_1\times\ldots\times N_n}$ is the $n$-dimensional array such that
	$[\LL]_{i_1\ldots i_n} = \lambda_{1,i_1} + \ldots + \lambda_{n,i_n}$; $\square_j$ denotes the sum along the $j$th dimension, as defined in \eqref{e:squarej}, $\Jn\in\R^{N_1\times\ldots\times N_n}$ is the all-one array of size $N_1\times\ldots\times N_n$, and $(\cdot)^{\odot sp/2}$ and $\odot$ denote respectively the Hadamard or entry-wise power of $sp/2$ and product, respectively.

\end{theorem}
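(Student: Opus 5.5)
The plan is to reduce the statement to Corollary~\ref{coro:n} applied pointwise. Fix a multi-index $(i_1,\ldots,i_n)$, i.e., a node $\vx=(x_{1,i_1},\ldots,x_{n,i_n})$. Set $w(\cdot)=\Phi_p(u(\vx)-u(\cdot))$; its discrete counterpart is the array $\mathbf W=\Phi_p([\U]_{i_1\ldots i_n}\Jn-\U)$, with $\Phi_p$ applied entrywise. In both \eqref{e:balakrishnanp} and \eqref{e:bochnerp}, the operators $\Delta(t-\Delta)^{-1}$ and $e^{t\Delta}$ act only on the dotted variable. So, with $\vx$ frozen, each is a linear operator applied to $w$, and I discretize it exactly as in the proof of Theorem~\ref{t:fraclap2D}, replacing every $\partial_{x_jx_j}$ by $\Dxj\square_j$.

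\textbf{Balakrishnan formula.} Solving $(t-\Delta)v=w$ in the eigenbases gives $\tilde\V=\tilde\U\oslash(t\Jn-\LL)$, where
\begin{equation*}
\tilde\U=\PP_n^{-1}\square_n[\ldots\square_2[\PP_1^{-1}\square_1\mathbf W]]
\end{equation*}
is exactly the array $\tilde\U(i_1,\ldots,i_n)$ of the statement. This step is well defined for $t>0$ because $\lambda_{j,i}\le0$. Applying $\Delta$ once more gives
\begin{equation*}
\PP_n\square_n[\ldots\square_2[\PP_1\square_1[(\LL\oslash(t\Jn-\LL))\odot\tilde\U]]],
\end{equation*}
and evaluating at $\vx$ amounts to keeping only the $(i_1,\ldots,i_n)$ entry. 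That is the same as multiplying by the rows $\PP_j(i_j,:)$ along each dimension. All these operations are linear and independent of $t$, so the $t$-integral passes through them to the eigenvalue array. Lemma~\ref{lem:I1}, applied with $s$ replaced by $sp/2$, then gives entrywise
\begin{equation*}
\int_0^\infty \frac{\Lambda}{t-\Lambda}\,\frac{dt}{t^{1-sp/2}}=-\pi\csc(\pi sp/2)(-\Lambda)^{sp/2}.
\end{equation*}
Multiplying by $C_4(n,s,p)$ produces the prefactor $-C_4\pi/\sin(\pi sp/2)$, which is $C(n,s,p)$ by \eqref{e:Cnsp}. This yields \eqref{e:approxfracplapnD}.

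\textbf{Bochner formula.} Here I proceed the same way: $e^{t\Delta}$ becomes $\PP\,[e^{\odot t\LL}\odot\tilde\U]$, evaluated at the node. One difference from Theorem~\ref{t:fraclap2D} is that \eqref{e:bochnerp} has no subtracted $-u(\vx)$ term. This causes no problem, because $w(\vx)=\Phi_p(0)=0$. Concretely, the row contraction of the identity reproduces $[\mathbf W]_{i_1\ldots i_n}=0$, so one may insert $-\Jn$ freely:
\begin{equation*}
e^{\odot t\LL}\odot\tilde\U \;\longrightarrow\; (e^{\odot t\LL}-\Jn)\odot\tilde\U,
\end{equation*}
without changing the value at the node. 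This is exactly what makes the integral $\int_0^\infty(e^{t\Lambda}-1)t^{-1-sp/2}\,dt$ convergent. Lemma~\ref{lem:I2}, with $s\to sp/2$, evaluates it as $\Gamma(-sp/2)(-\Lambda)^{sp/2}$, and \eqref{e:C2Gamma} turns the constant into $C(n,s,p)$.

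\textbf{Main obstacle.} The delicate points are the applicability of Lemmas~\ref{lem:I1} and \ref{lem:I2} with exponent $sp/2$, which may exceed $1$ here. One needs $sp/2\notin\mathbb N$, so that $\csc(\pi sp/2)$ and $\Gamma(-sp/2)$ are finite; I will interpret the integrals via the same analytic continuation and formal validity used in \cite{teso2021}. One also has to treat the zero eigenvalue: its entry is $0^{sp/2}=0$, so it contributes nothing, consistent with the insertion trick above. I expect the check that the lemmas extend beyond $sp/2<1$ to be the main point requiring care; otherwise the argument is routine bookkeeping identical to Theorem~\ref{t:fraclap2D} and Corollary~\ref{coro:n}.
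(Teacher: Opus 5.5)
Your proposal follows the paper's proof. You fix the node, treat $\Phi_p(u(\vx)-u(\cdot))$ as the input of the order-$sp/2$ discretization of Corollary~\ref{coro:n}, extract the $(i_1,\ldots,i_n)$ entry through the rows $\PP_j(i_j,:)$, and convert the constants via \eqref{e:Cnsp} and \eqref{e:C2Gamma}. Two of your steps fill in details the paper glosses over: using $\Phi_p(0)=0$ to insert $-\Jn$ in the Bochner case, and handling the zero eigenvalue.

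One correction to your ``main obstacle'': there is nothing to check there, because for $sp/2>1$ the lemmas do not extend. The integrals in Lemmas~\ref{lem:I1} and~\ref{lem:I2} diverge (at $t=\infty$ and $t=0$, respectively). The formula can then hold only as a meromorphic continuation in the exponent, which is where the hypothesis $sp/2\notin\mathbb N$ enters. The paper's own proof has the same limitation: by invoking Corollary~\ref{coro:n} with order $sp/2$, it tacitly covers only $sp<2$, which matches its remark that \eqref{e:balakrishnanp} and \eqref{e:bochnerp} are formally valid only in that range.
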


\begin{proof}

Let us fix $\vx = \vx_0$ and rewrite \eqref{e:balakrishnanp}:
\begin{align}
\label{e:vxvx0}
(-\Delta)_p^s u(\vx_0) & = \frac{-\pi C_4(n,s,p)}{\sin(\pi sp/2)}\left[\frac{\sin(\pi sp/2)}{\pi}\int_0^\infty(-\Delta)(t - \Delta)^{-1}\left[\Phi_p(u(\vx_0) - u(\cdot))\right](\vx)\frac{dt}{t^{1-sp/2}}\right]
	\cr
& = C(n,s,p)\left.\left[\frac{\sin(\pi sp/2)}{\pi}\int_0^\infty(-\Delta)(t - \Delta)^{-1}\left[\Phi_p(u(\vx_0) - u(\cdot))\right]\frac{dt}{t^{1-sp/2}}\right]\right|_{\vx = \vx_0},
\end{align}
where we have used \eqref{e:Cnsp}. Note that, in the first line of \eqref{e:vxvx0}, we evaluate the expression inside the integral at $\vx = \vx_0$, then we integrate with respect to $t$; however, in the second line, we integrate with respect to $t$ for each value of $\vx$, but it is not a matter of concern what happens when $\vx \neq \vx_0$, because we only keep the $\vx = \vx_0$ case after the integration, and all the $\vx\neq\vx_0$ cases are directly discarded, without even needing to know their existence. Therefore, the bracketed part in \eqref{e:vxvx0} matches \eqref{e:balakrishnan}, i.e., can be regarded as the fractional Laplacian of order $sp/2$ of $\Phi_p(u(\vx_0) - u(\cdot))$. Hence, taking $\vx_0 = (x_{1,i_1}, \ldots, x_{n,i_n})$ and applying Corollary~\ref{coro:n}, to \eqref{e:vxvx0}, we get
\begin{equation*}
(-\Delta)_p^s u(x_{1,i_1}, \ldots, x_{n,i_n}) \approx \left[\PP_n\square_n[\ldots\square_2[\PP_1\square_1[(-\LL)^{\odot sp/2}\odot\tilde\U]]]\right]_{i_1\ldots i_n},
\end{equation*}
where
\begin{equation*}
\tilde\U(i_1, \ldots, i_n) = \PP_n^{-1}\square_n[\ldots\square_2[\PP_1^{-1}\square_1\Phi_p([\U]_{i_1\ldots i_n}\Jn - \U)]],
\end{equation*}
i.e., we compute the $n$-dimensional array $\PP_n\square_n[\ldots\square_2[\PP_1\square_1[(-\LL)^{\odot sp/2}\odot\tilde\U]]\in\CC^{N_1\times\ldots\times N_n}$, then we keep the entry corresponding to $\vx_0$, which is equivalent to using only the $i_j$th row of $\PP_j$, for $1 \le j\le n$, and calculating $\PP_n(i_n, :)\square_n[\ldots\square_2[\PP_1(i_1,:)\square_1[(-\LL)^{\odot sp/2}\odot\tilde\U]]\in\CC$, from which \eqref{e:approxfracplapnD} follows.

On the other hand, we arrive also at \eqref{e:approxfracplapnD} if we start from \eqref{e:bochnerp}:
\begin{align}
\label{e:vxvx02}
(-\Delta)_p^s u(\vx) & = C_2(n,s,p)\Gamma(-sp/2)\left[\frac{1}{\Gamma(-sp/2)}\int_0^\infty e^{t\Delta}\left[\Phi_p(u(\vx_0) - u(\cdot))\right](\vx)\frac{dt}{t^{1+sp/2}}\right]
	\cr
& = C(n,s,p)\left.\left[\frac{1}{\Gamma(-sp/2)}\int_0^\infty e^{t\Delta}\left[\Phi_p(u(\vx_0) - u(\cdot))\right]\frac{dt}{t^{1+sp/2}}\right]\right|_{\vx = \vx_0},
\end{align}
where we have taken into account \eqref{e:C2Gamma}. Then, the bracketed part in \eqref{e:vxvx02} matches \eqref{e:bochnerp}, so it can also be regarded as the fractional Laplacian of order $sp/2$ of $\Phi_p(u(\vx_0) - u(\cdot))$. Applying Corollary~\ref{coro:n}, to \eqref{e:vxvx02}, and reasoning as above, \eqref{e:approxfracplapnD} follows again.

\end{proof}

\begin{remark} The comments on the scale factor done in Section~\ref{s:fraclap} for the fractional Laplacian are valid also for the fractional $p$-Laplacian. In particular, if all the second-order differentiation matrices are scaled by the same factor $L^2$, then it is enough to divide the numerical approximation of $(-\Delta)_p^su(\vec{x})$ by $(L^2)^{sp/2} = L^{sp}$.
\end{remark}

\subsection{Implementation in MATLAB}

In order to implement numerically \eqref{e:approxfracplapnD}, we can proceed in two different ways: by computing \eqref{e:approxfracplapnD} for all the $N_1\times\ldots\times N_n$ different values that can take $(i_1, \ldots, i_n)$, or by working with $2n$-dimensional arrays, which produces very compact codes, but the memory requirements quickly become prohibitive, even for $n = 2$.

With respect to the first option, we can consider $n$ nested for-loops to generate all the possible $n$-tuples $(i_1, \ldots, i_n)$. However, we proceed as in \cite{cuestadelahozgirona2024}, where we generate them sequentially. More precisely, we use the following algorithm, taken from \cite{cuestadelahozgirona2024}.

\begin{algorithm}
	\caption{Sequential generation of the $n$-tuples $(i_1, \ldots, i_n)$\label{alg:i1i2in}}
	\begin{algorithmic}
		\State $indexmax \gets 1$
		\For{$j\gets 1$ \textbf{to} $n$}
		\State $indexmax \gets N_j\cdot indexmax$
		\State $i_j \gets N_j$
		\EndFor
		\For{$index \gets indexmax$ \textbf{to} $1$ \textbf{step} $-1$}
		\State \textbf{output} $(i_1, \ldots, i_n)$
		\State $k \gets 1$
		\While{$k \le n$}
		\If{$i_k > 1$}
		\State $i_k \gets i_k - 1$
		\State \textbf{break}
		\EndIf
		\State $i_k \gets N_k$
		\State $k \gets k + 1$
		\EndWhile
		\EndFor
	\end{algorithmic}
\end{algorithm}

This algorithm has several advantages: it consists of one single for-loop, it can be used without any change for any number $n$ of dimensions, and it generates the $n$-tuples, indexing them by means of the variable \verb|index|, in such a way that, given an $n$-tuple $(x_{1,i_1}, \ldots, x_{n, i_n})$ and its corresponding value of \verb|index|, accessing $u(x_{1,i_1}, \ldots, x_{n, i_n})$ is reduced to typing \verb|U(index)|. Note that the algorithm generates the $n$-tuples from the last one to the first one, because this order was necessary in \cite{cuestadelahoz2024}, but in our case, the order does not matter, and they could be generated in any other order, provided that none is forgotten.

On the other hand, as said above, it is possible to implement \eqref{e:approxfracplapnD}, without needing to generate explicitly the $n$-tuples $(i_1, \ldots, i_n)$, by taking advantage of MATLAB's syntax. To illustrate this, suppose that we take $n = 1$ (in what follows, we will use the notation of Corollary~\ref{coro:1}). Then, instead of evaluating point-wise $\Phi_p$ at $[\U]_i\Jx - \U\in\CC^{N_x}$, for $1 \le i \le N_x$, where $[\U]_i = u(x_i)$, and $\Jx\in\R^{N_x}$ is the all-one vector of length $N_x$, we evaluate point-wise $\Phi_p$ at $\Jx\cdot\U^T - \U\cdot\Jx^T\in\CC^{N_x\times N_x}$, which in MATLAB is reduced to typing \verb|phip(u.' - u)|, where the function \verb|phip| corresponds to \eqref{e:Phip}. Bearing in mind this, we apply Corollary~\ref{coro:1} simultaneously to all the columns of $\Phi_p(\Jx\cdot\U^T - \U\cdot\Jx^T)\in\CC^{N_x\times N_x}$, taking $sp/2$ instead of $s$, and the result is precisely the diagonal of the resulting matrix multiplied by $C(1, s, p)$:
\begin{equation}
\label{e:fracplap1D}
(-\Delta)_p^s u(\x) \approx C(1, s, p)\diag\left(\Px\cdot\left[[(-\boldsymbol\lambda_x)^{\odot sp/2}\cdot\Jx^T]\odot\left[\Px^{-1}\cdot[\Phi_p(\Jx\cdot\U^T - \U\cdot\Jx^T)]\right]\right]\right).
\end{equation}
Moreover, in the MATLAB implementation, we do not need to multiply the column vector $(-\boldsymbol\lambda_x)^{\odot sp/2}$ by $\Jx^T$. Indeed, if \verb|v| is a column vector and \verb|A| is a matrix whose number of rows coincides with the length of \verb|v|, then \verb|v.*A| multiplies point-wise \verb|v| by all the columns of \verb|A|. Thus, the right-hand side \eqref{e:fracplap1D} can be expressed in MATLAB in a very compact way:
\begin{verbatim}
	C*diag(Px*((-lambdax).^(s*p/2).*(Px\phip(U.'-U))))
\end{verbatim}
where \verb|C| stores $C(1,s,p)$, and \verb|Px|, \verb|lambdax| and \verb|phip| correspond to $\PP_x$, $-\lx$ and $\Phi_p$, respectively. Note that \verb|U.'| is equivalent to typing \verb|permute(U,[2,1])|. Therefore, in order to generalize \eqref{e:fracplap1D} to $n$-dimensions, we observe that, if \verb|U| is an $n$-dimensional array, then all the instances of $[\U]_{i_1\ldots i_n}\Jn - \U$ can be generated simultaneously by typing \verb|permute(U,[n+1:2*n 1:n])-U|, which produces a $2n$-dimensional array of size $N_1\times\ldots\times N_n\times N_1\times\ldots\times N_n$. Then, if \verb|V| is an $n$-dimensional array, and \verb|A| is a $2n$-dimensional array, such that the size of the first $n$ dimensions of \verb|V| coincide with the size of the dimensions of $\V$, then \verb|V.*A| is equivalent to multiplying entry-wise \verb|V| by all the possible $n$-dimensional arrays that arise after giving values to the last $n$-dimensions of \verb|A|. For instance, if we type
\begin{verbatim}
	V=rand(2,3);
	A=rand(2,3,4,5);
	B=V.*A;
\end{verbatim}
the last line produces exactly the same result as if we type
\begin{verbatim}
	B=zeros(size(A));
	for i1=1:4
		for i2=1:5
			B(:,:,i1,i2)=V.*A(:,:,i1,i2);
		end
	end
\end{verbatim}
but \verb|B=V.*A| is faster. Applying this idea to Corollary~\ref{coro:n}, we get the equivalent of \eqref{e:fracplap1D}. Note that, in the case of a $2n$-dimensional array of size $N_1\times\ldots\times N_n\times N_1\times\ldots\times N_n$, with $n > 1$, we cannot compute directly the diagonal of it, as in \eqref{e:fracplap1D}. Therefore, we reshape it to a square matrix of order $N_1\times\ldots\times N_n$, then, extract its diagonal, which is a vector of length $N_1\times\ldots\times N_n$, or, equivalently, if we index the elements of the array from $1$ to $N_1^2\times\ldots\times N_n^2$, then the elements we are interested in are stored precisely at the positions $1 + k(N_1\times\ldots\times N_n + 1)$, for $0\le k\le(N_1\times\ldots\times N_n-1)$. Finally, we reshape the vector of length $N_1\times\ldots\times N_n$ to an $n$-dimensional array of size $N_1\times\ldots\times N_n$.

In Listing~\ref{code:fractionalpLaplacianND}, we offer the MATLAB code that approximates numerically $(-\Delta)_p^se^{-x_1^2 - \ldots - x_n^2}$. Besides, we measure the elapsed time to generate $(-\LL)^{sp/2}$, $\U$ and the matrices $\PP_j$ and $\PP_j^{-1}$, and also the elapsed times to compute the numerical approximation of $(-\Delta)_p^se^{-x_1^2 - \ldots - x_n^2}$ following both a loop-based approach and a loopless approach. Unfortunately, we do not know the exact expression, except when $p = 2$, which is given by \eqref{e:Deltasexp}, but, as a preliminary test to verify the correctness of the implementation, we compare the max norm of the difference between the approximations given by the two approaches, which is infinitesimally small, because, e.g., MATLAB can produce infinitesimally different results if we multiply directly a matrix by a column vector, and if we multiply one by one the rows of the matrix by those of the vector. Moreover, when $p = 2$, we also compare the results with the exact solution in \eqref{e:Deltasexp}, which coincide (again, up to infinitesimally small differences) with those given by Listing~\ref{code:fractionalLaplacianND}.

In general, the loopless approach is faster, provided that the available memory is enough to handle correctly the size of the $2n$-dimensional array; however, if the size is too large, this approach can simply be unfeasible. For instance, when $p = 2$, $s = 0.4$, $n = 1$, $N_1 = 5000$ and $L_1 = 490$, generating $(-\LL)^{sp/2}$, $\U$, $\PP_1$ and $\PP_1^{-1}$ takes 44.17 seconds; then, the loop-based approach takes 44.36 seconds (with an error of $1.0449\times10^{-12}$), whereas the loopless approach takes only 5.08 seconds (with a virtually identical error of $1.0453\times10^{-12}$); moreover, the discrepancy between both approaches is of $4.4409\times10^{-16}$, i.e., negligible. In what regards the computation of the exact calculation, it takes 9.74 seconds. At this point, we remark that $\LL$ and $\U$ only need to be computed once, and then reused for different values of $s$ and $p$ (we have taken $p = 2$, to compare the results with the exact solution, but a different choice of $p$ does not affect the execution time).

On the other hand, when $p = 2$, $s = 0.67$, $n = 2$, $N_1 = 200$, $N_2 = 201$, $L_1 = 18$ and $L_2 = 18.1$, generating $(-\LL)^{sp/2}$, $\U$, $\PP_1$, $\PP_1^{-1}$, $\PP_2$ and $\PP_2^{-1}$ takes only 0.03 seconds; then, the loop-based approach takes 47.65 seconds (with an error of $7.9572\times10^{-13}$), whereas the loopless approach takes 340.70 seconds (with an identical error of $7.9572\times10^{-13}$); moreover, the discrepancy between both approaches is of $6.6613\times10^{-16}$, i.e., virtually equal to the epsilon of the machine. In what regards the computation of the exact calculation, it takes 78.38 seconds. The reason why the loopless approach is much slower is due to the fact that the four-dimensional array of size $200\times201\times200\times201$ occupies $12.92832\times10^9$ bytes of memory, and the total amount of available RAM memory is 32 gigabytes. For the sake of comparison, we have run this example in another computer with 128 Gb of RAM memory, and the loopless approach is almost five times as faster as the loop-based approach.

\begin{lstlisting}[label=code:fractionalpLaplacianND, language=MATLAB, basicstyle=\footnotesize, caption = {Program \texttt{fractionalpLaplacianND.m}, that approximates numerically the fractional $p$-Laplacian of a function in $n$ dimensions}]
clear
close all
tic
p=2; % $p$
s=0.67; % $s$
Njall=[200,201]; % Number of nodes $\{N_1, \ldots, N_n\}$
Ljall=[18,18.1]; % Scale factors $\{L_1, \ldots, L_n\}$
%s=0.4; Njall=5000; Ljall=490;
if length(Njall)~=length(Ljall), error('Error in the data!'); end
n=length(Njall); % Number of dimensions $n$
Pjall=cell(1,n); % Eigenvector matrices $\mathbf P_j$
invPjall=cell(1,n); % $\mathbf P_j^{-1}$
phip=@(t)abs(t).^(p-1).*sign(t); % $\Phi_p(t)$
C=-sqrt(pi)*2^(2*s-s*p-1)*gamma(1-s*p/2)/gamma((p+1)/2)/gamma(1-s); % $C(n,s,p)$
tic % Start of the loop-based approach
for j=1:n
    Dxjxj=create_Dx_Dxx(Njall(j)); % $\mathbf D_{x_jx_j}$
    [Pj,Lambdaj]=eig(Dxjxj); % $\mathbf P_j$ and $\boldsymbol\Lambda_j$
    lambdaj=diag(Lambdaj)/Ljall(j)^2; % $\boldsymbol\lambda_j$
    aux=find(min(abs(lambdaj))==abs(lambdaj));
    lambdaj(aux)=0; % Set to zero the eigenvalue with the smallest modulus.
    Pj(:,aux)=1/sqrt(Njall(j)); % Eigenvector associated to $\lambda_j=0$.
    Pjall{j}=Pj; % Store $\mathbf P_j$.
    invPjall{j}=inv(Pj); % Store $\mathbf P_j^{-1}$.
    xj=Ljall(j)*cot(pi*(1/2+(0:Njall(j)-1)).'/ Njall(j)); % $\mathbf x_j$
    if j==1
        sumX2=xj.^2; % $x_1^2+\ldots+x_n^2$
        Lambda=lambdaj; % $\boldsymbol\Lambda$
    else
        P=[j 2:j-1 1 j+1:n]; % Permute the first and $m$th dimensions
        Lambda=Lambda+permute(lambdaj,P);
        sumX2=sumX2+permute(xj.^2,P);
    end
end
Lsp=(-Lambda).^(s*p/2);
U=exp(-sumX2); % $u=\exp(-x_1^2-\ldots-x_n^2)$
toc,tic
if n==1, fracplap_num=zeros(Njall,1); else fracplap_num=zeros(Njall); end
ii=Njall; % ii contains the $n$-tuples $(i_1, \ldots, i_n)$.
for index=prod(Njall):-1:1 % index contains the position of the $n$-tuple.
    tildeU=phip(U(index)-U); % $\tilde{\mathbf U}$
    for j=1:n, tildeU=multND(invPjall{j},tildeU,j); end
    aux=Lsp.*tildeU;
    for j=1:n, aux=multND(Pjall{j}(ii(j),:),aux,j); end
    fracplap_num(index)=aux;
    if ii(1)>1
        ii(1)=ii(1)-1;
    else
        k=2;
        while k<=n&&ii(k)==1
            k=k+1;
        end
        if k<=n
            ii(k)=ii(k)-1;
            ii(1:k-1)=Njall(1:k-1);
        end
    end
end
fracplap_num=C*fracplap_num; % Numerical approximation of $(-\Delta)_p^su(\vec x)$
toc, tic % Start of the loopless approach
tildeU=phip(permute(U,[n+1:2*n 1:n])-U);
for j=1:n, tildeU=multND(invPjall{j},tildeU,j); end % $\tilde{\mathbf U}$
tildeU=Lsp.*tildeU;
for j=1:n, tildeU=multND(Pjall{j},tildeU,j); end
if n==1 % Numerical approximation of $(-\Delta)_p^su(\vec x)$
    fracplap_num2=C*diag(tildeU);
else
    fracplap_num2=C*reshape(tildeU(1:prod(Njall)+1:prod(Njall)^2),Njall);
end
toc
norm(fracplap_num(:)-fracplap_num2(:),inf) % Discrepancy in max norm
if p==2 % When $p=2$, we know the explicit solution.
    tic
    fracplap=2^(2*s)*gamma(s+n/2)/gamma(n/2)*hypergeom(s+n/2,n/2,-sumX2);
    toc
    norm(fracplap(:)-fracplap_num(:),inf) % Error in max norm
    norm(fracplap(:)-fracplap_num2(:),inf) % Error in max norm    
end
\end{lstlisting}

\section{Numerical experiments for the fractional Laplacian}

\label{s:numericalfracLap}

In what follows, we perform several numerical experiments to see how well our method approximates the fractional Laplacian. To that purpose, we use the explicit formulas given in \cite{sheng2020}, namely \eqref{e:Deltasexp} and
\begin{equation}
\label{e:Deltas1x2}
(-\Delta)^s\left(\frac{1}{(1+x_1^2 + \ldots + x_n^2)^r}\right) = \frac{2^{2s}\Gamma(s+r)\Gamma(s+n/2)}{\Gamma(r)\Gamma(n/2)}{}_2F_1\left(s+r,s+\frac n2; \frac n2; -x_1^2 - \ldots - x_n^2\right),
\end{equation}
where $s \ge 0$, $r \ge 0$ and $n\in\mathbb N$. 

Since we are computing the fractional Laplacian of radial functions in \eqref{e:Deltasexp} and \eqref{e:Deltas1x2}, we take for the sake of simplicity $N = N_1 = \ldots = N_n$ and $L = L_1 = \ldots = L_n$ in the numerical experiments. Observe that, in that case, $\Dxj$ is identical, for all $1\le j\le n$, so it is enough to generate it once.

In order to measure the errors, we compute $\|(-\Delta)^s(\cdot) - (-\Delta)^s_{num}(\cdot)\|_\infty$, where $(-\Delta)^s_{num}$ denotes the numerical approximation of the fractional Laplacian. We have taken $N\in \{16, 32, 64, 128\}$, $s\in\{0.1, 0.2, \ldots, 0.9\}$, $n\in\{1, 2, 3\}$, and, in \eqref{e:Deltas1x2}, $r \in \{1, 2\}$. In what regards $L$, we have taken $L \in\{0.1, 0.2, \ldots, 30\}$, when $n = 1$; $L \in\{0.1, 0.2, \ldots, 20\}$, when $n = 2$; and $L \in\{0.1, 0.2, \ldots, 16\}$, when $n = 3$. In Figure~\ref{f:Deltasexp}, we plot the errors corresponding to \eqref{e:Deltasexp}; in Figure~\ref{f:Deltasr1}, the errors corresponding to \eqref{e:Deltas1x2}, with $r = 1$; and in Figure~\ref{f:Deltasr2}, the errors corresponding to \eqref{e:Deltas1x2}, with $r = 2$. In all cases, a distinct color is used for each value of $s$, and a distinct pattern for each value of $N$. To facilitate the comparison, the errors are plotted on a semilogarithmic scale over the range $[10^{-14}, 1]$.

\begin{figure}
	\centering
\includegraphics[width=0.3333\textwidth]{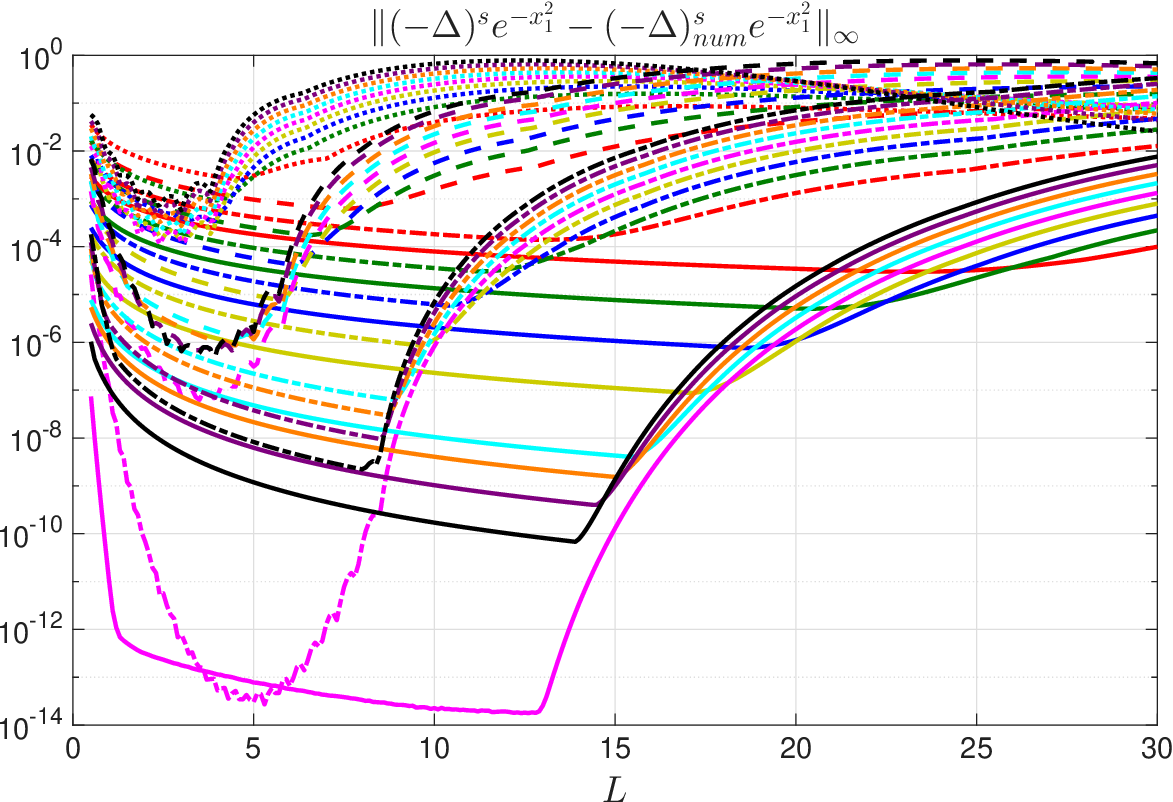}\includegraphics[width=0.3333\textwidth]{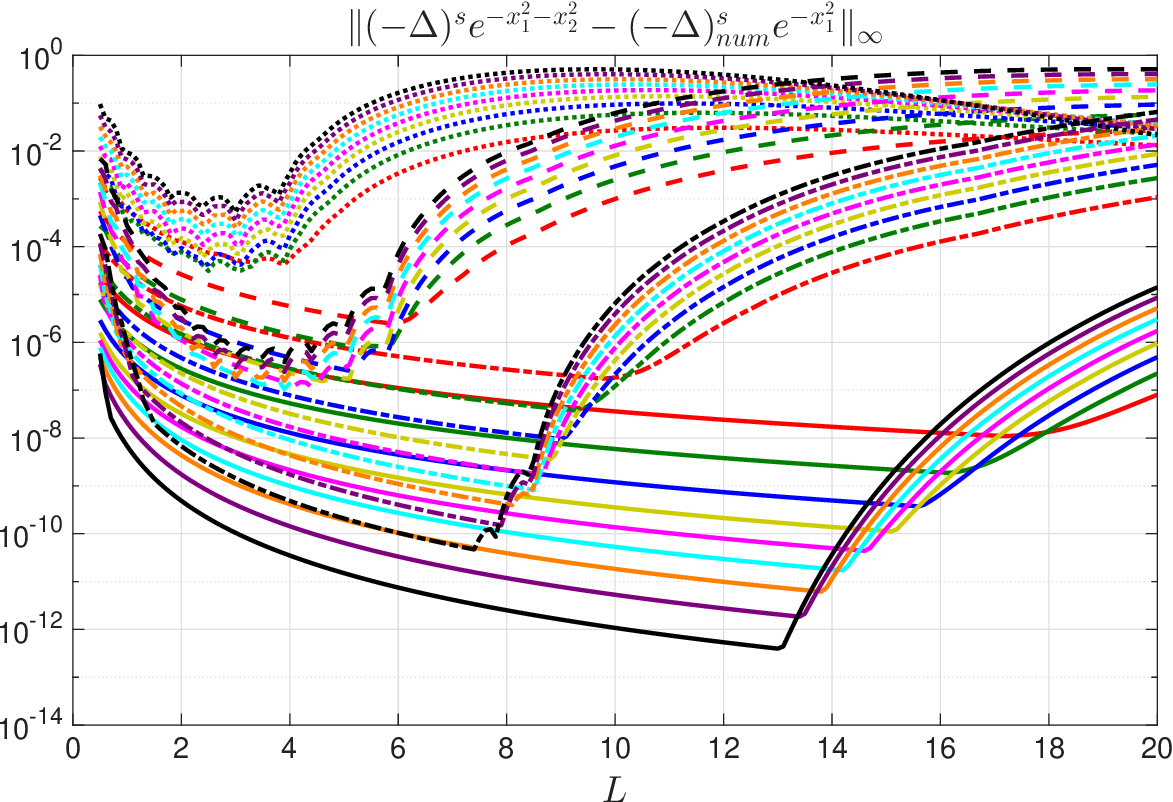}\includegraphics[width=0.3333\textwidth]{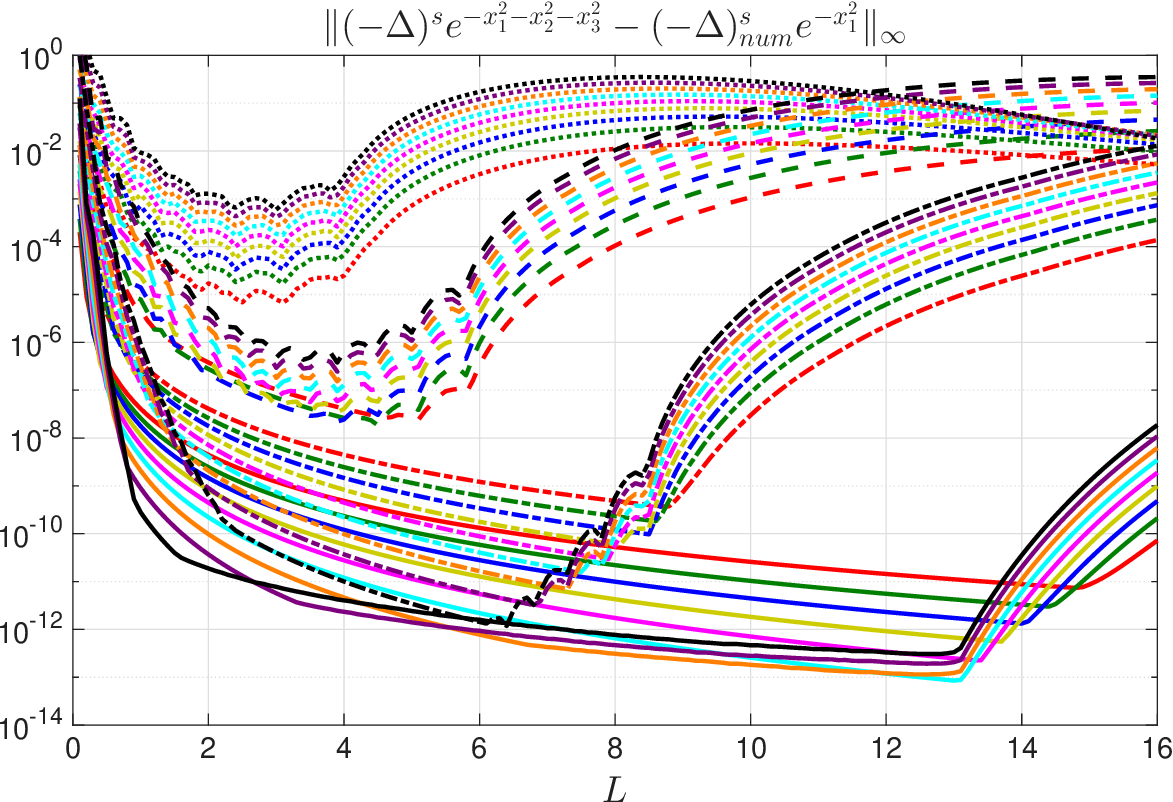}
\includegraphics[width=\textwidth]{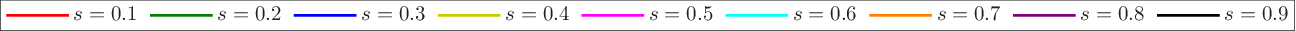}
\includegraphics[width=0.4444\textwidth]{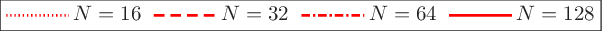}
\caption{Errors in semilogarithmic scale corresponding to \eqref{e:Deltasexp}, in one dimension (left), two dimensions (center), and three dimensions (right).}
\label{f:Deltasexp}
\end{figure}

\begin{figure}
	\centering
	\includegraphics[width=0.3333\textwidth]{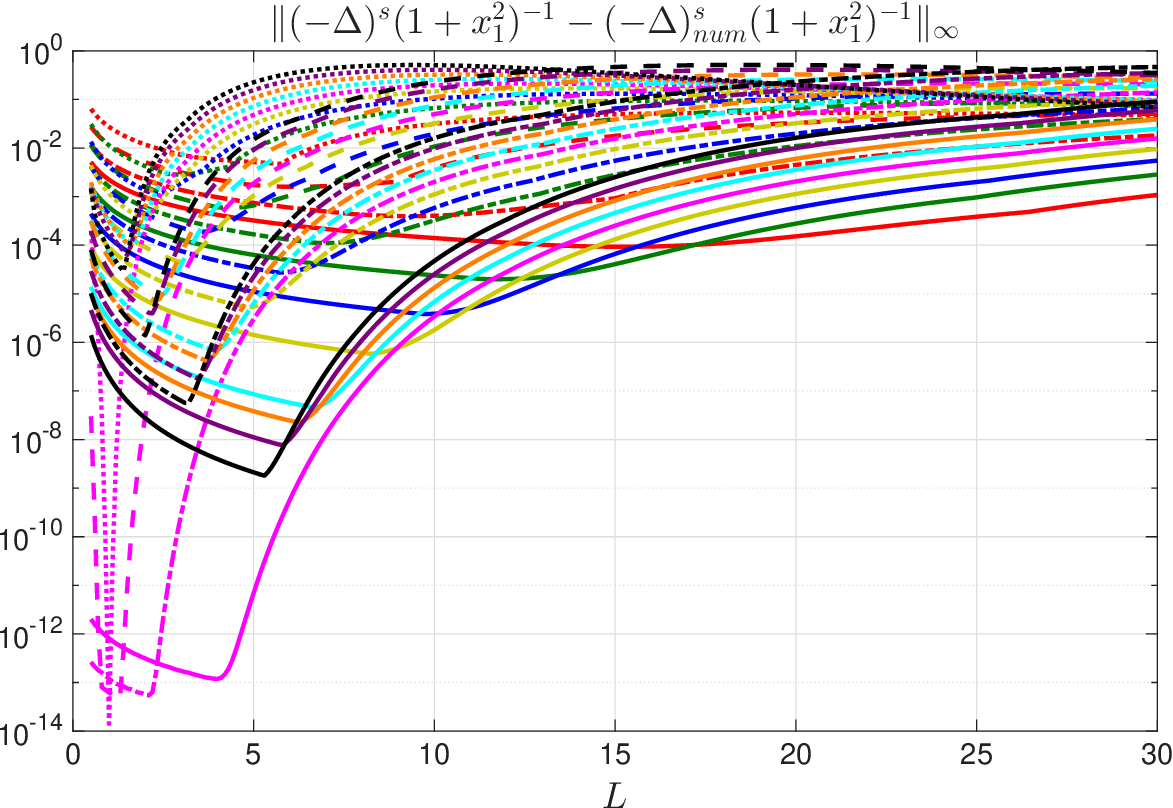}\includegraphics[width=0.3333\textwidth]{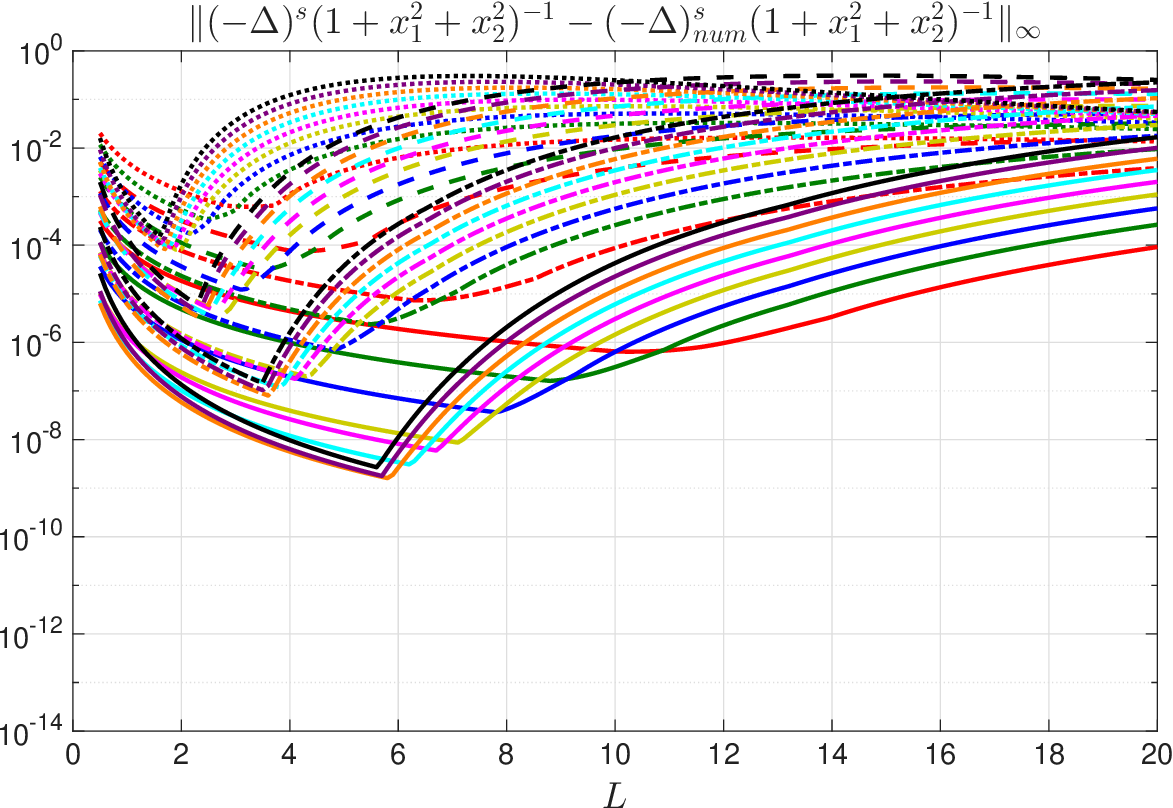}\includegraphics[width=0.3333\textwidth]{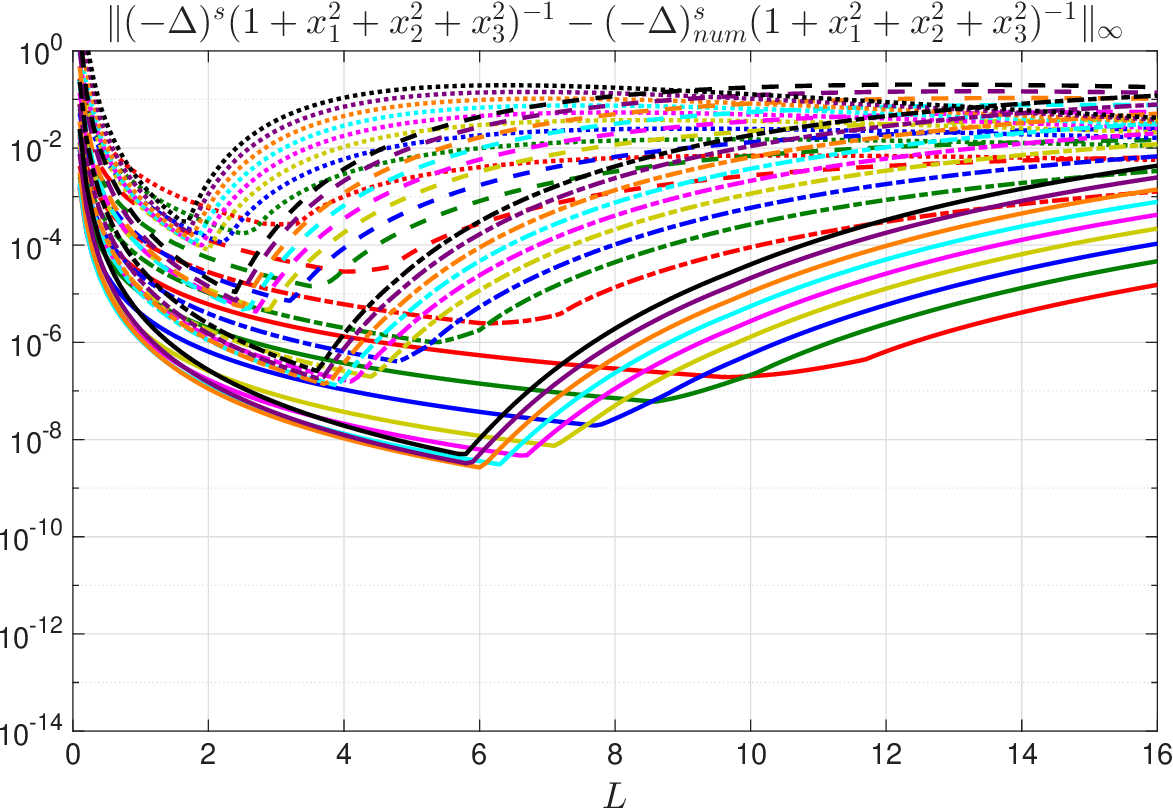}
	\includegraphics[width=\textwidth]{legends.eps}
	\includegraphics[width=0.4444\textwidth]{legendN.eps}
	\caption{Errors in semilogarithmic scale corresponding to \eqref{e:Deltas1x2} with $r = 1$, in one dimension (left), two dimensions (center), and three dimensions (right).}
	\label{f:Deltasr1}
\end{figure}

\begin{figure}
	\centering
	\includegraphics[width=0.3333\textwidth]{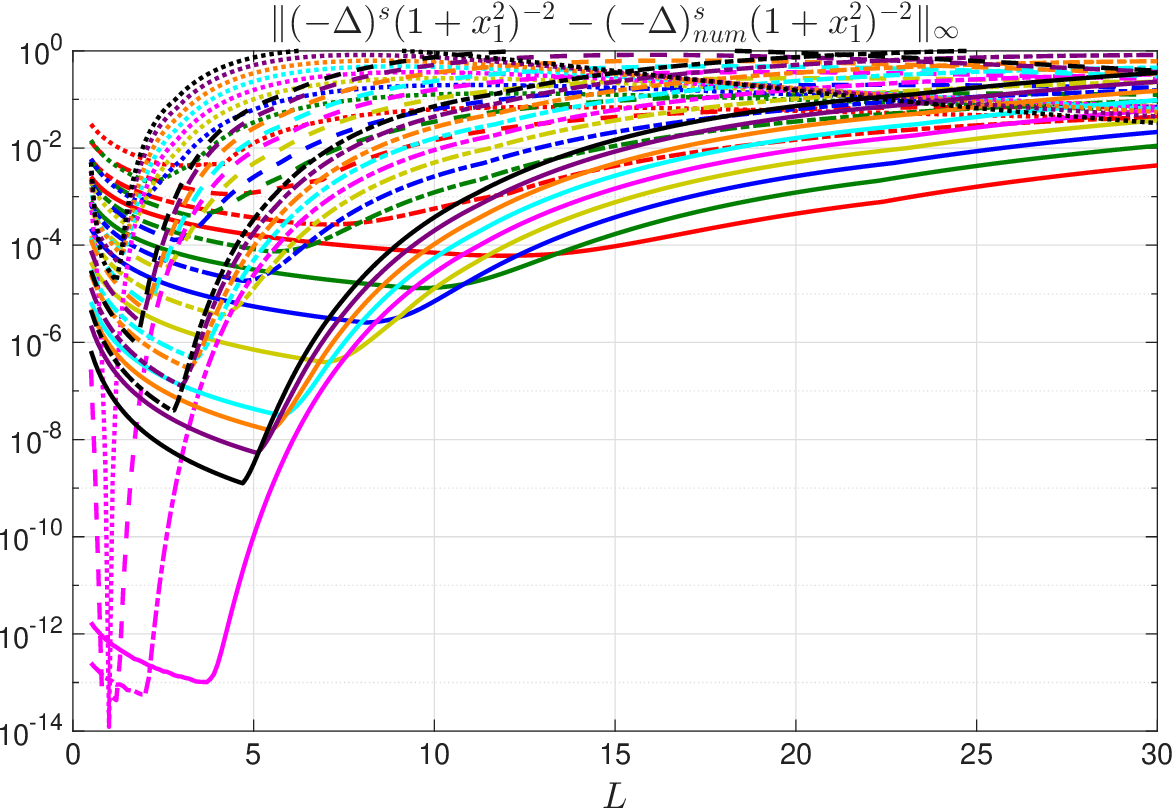}\includegraphics[width=0.3333\textwidth]{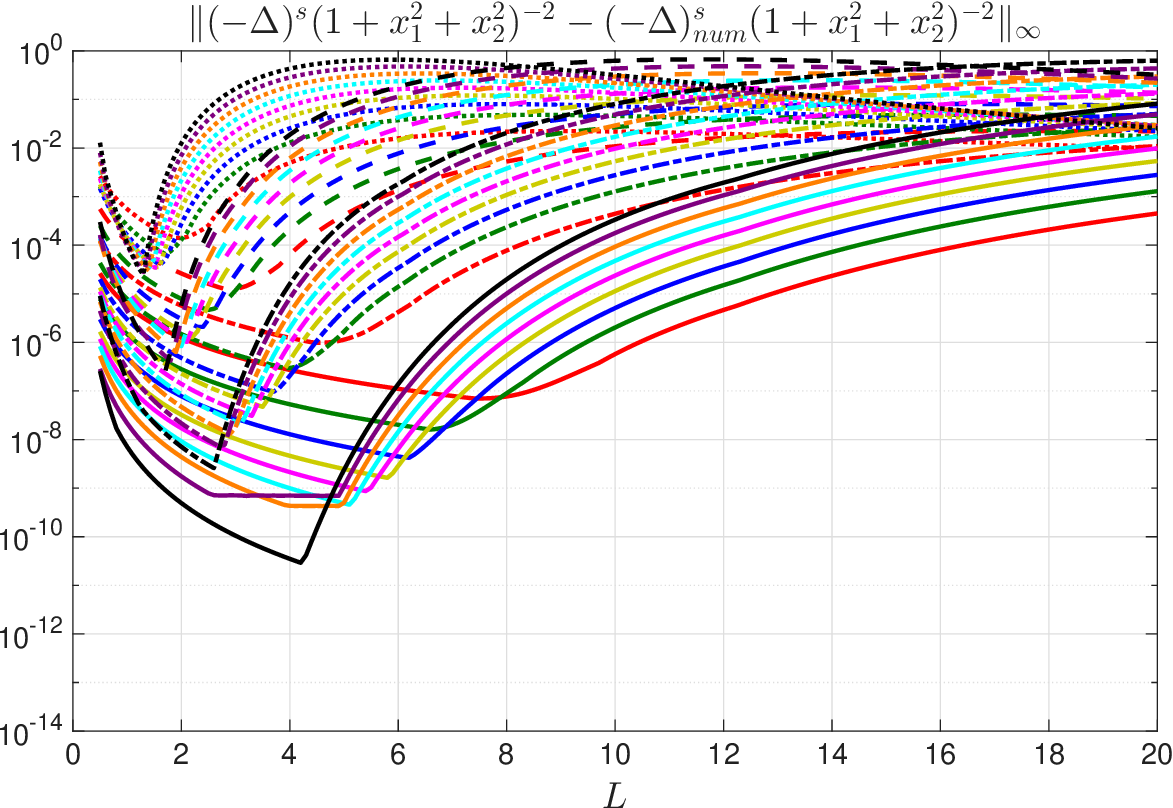}\includegraphics[width=0.3333\textwidth]{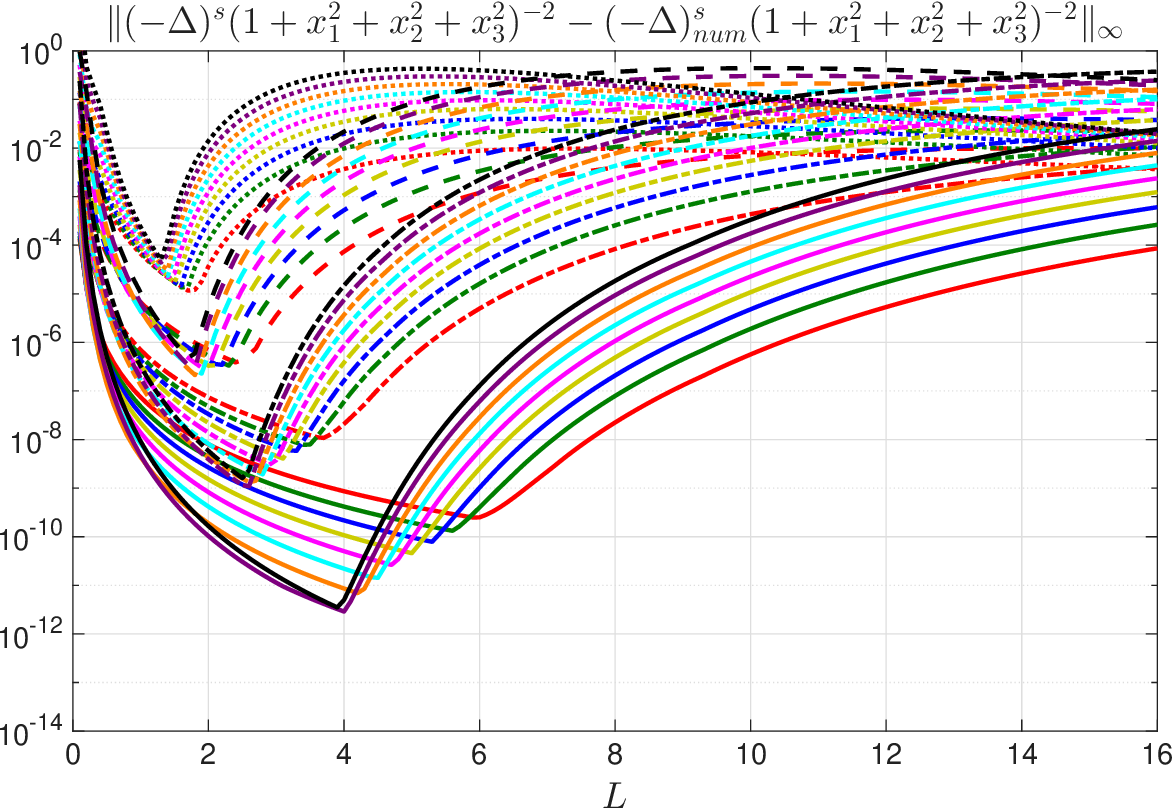}
	\includegraphics[width=\textwidth]{legends.eps}
	\includegraphics[width=0.4444\textwidth]{legendN.eps}
	\caption{Errors in semilogarithmic scale corresponding to \eqref{e:Deltas1x2} with $r = 2$, in one dimension (left), two dimensions (center), and three dimensions (right).}
	\label{f:Deltasr2}
\end{figure}

As can be seen from the numerical experiments, the accuracy of the results improves as $n$ increases, which is in agreement with the results in \cite{sheng2020}. Moreover, in general, it also increases with $s$, except for the special case when $n = 1$ and $s = 1/2$, which corresponds to the half Laplacian, where spectral accuracy is achieved for the three functions considered (with errors of the order of $\mathcal O(10^{-14})$). On the other hand, when $n = 1$ and $s\neq1/2$, the results are rather modest, especially for $s = 0.1$, so, when $n = 1$, it seems advisable to work with \eqref{e:D2aalternative}. Note also that the decay rate of the function $u(x)$ plays a role; more precisely, the faster the decay, the more accurate the results. 

With respect to the dependence of the results on $L$, even if there are some theoretical results \cite{boyd1982}, the experiments confirm what was said in \cite{cayamacuestadelahoz2020,cayamacuestadelahoz2021,cuestadelahozgirona2024,cayamacuestadelahozgarciacervera2025}, i.e., that the problem of determining the best choice for $L$ is a difficult one that depends on many factors (here, $n$, $s$, $N$ and the function itself). However, a good working rule of thumb seems to be that the absolute value of a given function at the extreme grid points is smaller than a given threshold.

\section{Numerical experiments for the fractional $p$-Laplacian}
\label{s:numericalfracpLap}

In what regards the fractional $p$-Laplacian with $p\neq2$, unfortunately, we do not have exact, regular and bounded solutions with which we can compare our numerical approximations. However, since the numerical approximation of the fractional $p$-Laplacian is based on that of the fractional Laplacian, we can expect a similar behavior of the former, i.e., that the accuracy of the results improves with $n$, and, in general, with $s$. At this point, we must recall that, as said in the introduction, the formulas \eqref{e:balakrishnanp} and \eqref{e:bochnerp} on which our numerical method is based are formally equivalent to \eqref{e:Dpa}, when $p\in (1,\infty)$, $s\in (0,1)$ and $sp < 2$. However, this equivalence is no longer clear when $sp \ge 2$. Therefore, when $sp < 2$ is not satisfied, even if our numerical method returns coherent results, it is unclear to which operator they really correspond. To illustrate this, we have approximated numerically $(-\Delta)_p^s e^{-x_1^2}$, for $s = 0.8$, taking $N = 2000$ and $L = 20$. Note that, when $p = 2$, the exact solution is known, and the error is equal to $1.1749\times 10^{-10}$. 

In the left panel of Figure~\ref{f:fracplapexpn1all}, we consider $p\in(1,2/s) = (1,2.5)$, whereas in the right panel of Figure~\ref{f:fracplapexpn1all}, we take $p\in(1, 10)$. When $p \in (1, 2.5)$, which is the range of $p$ where $sp < 2$ is satisfied, we can observe the emergence of two singularities, as $p \to 1^+$ and $p\to 2.5^-$. In contrast, if we consider a larger set of values of $p$, e.g., $p \in (1, 10)$, singularities also appear at $p = 5$, $p = 7.5$, and $p = 10$. However, as noted above, it is not clear which operator the case with $p \in (2.5, 10)$ corresponds to, because $sp > 2$.

\begin{figure}
	\centering
	\includegraphics[width=0.5\textwidth]{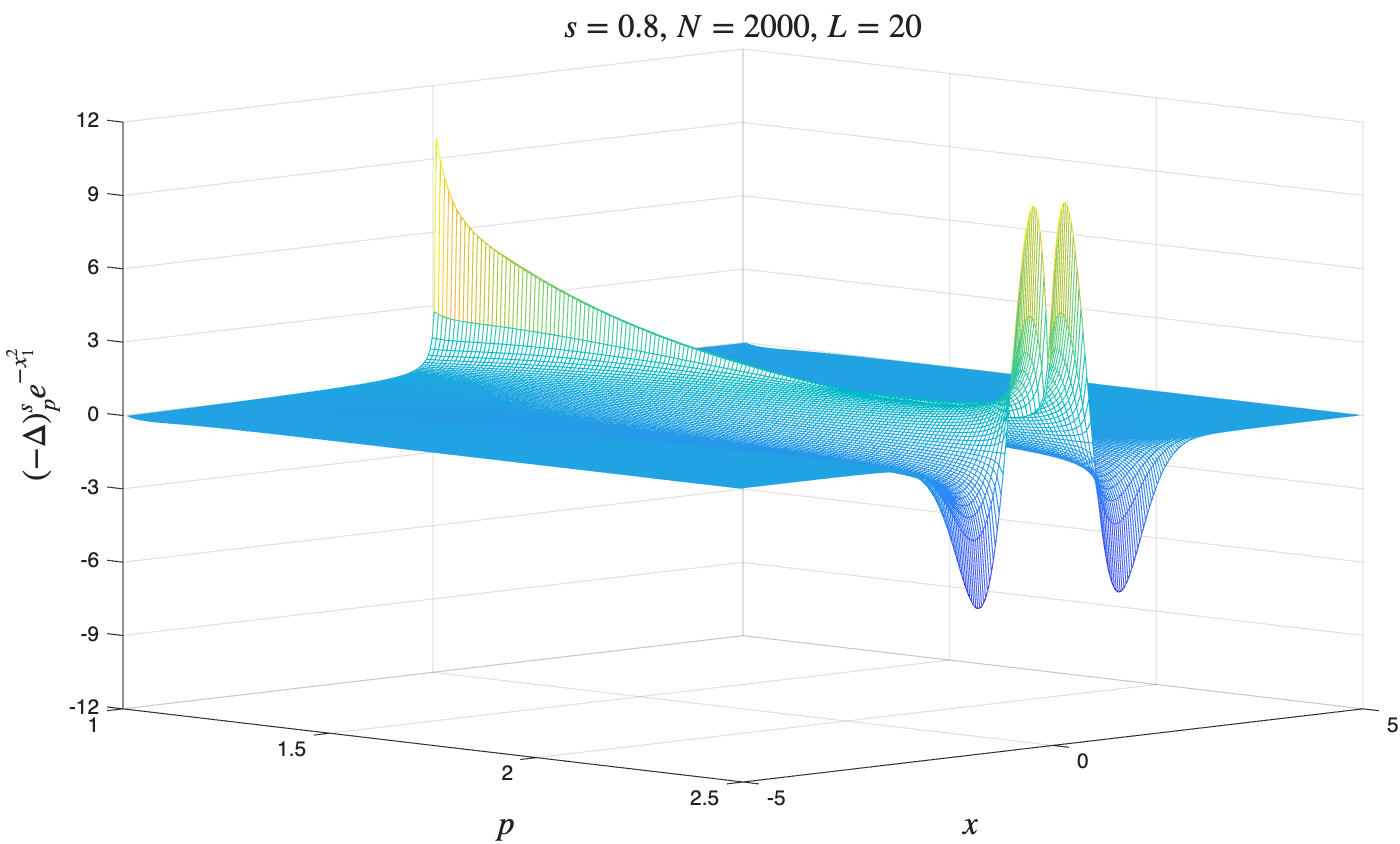}\includegraphics[width=0.5\textwidth]{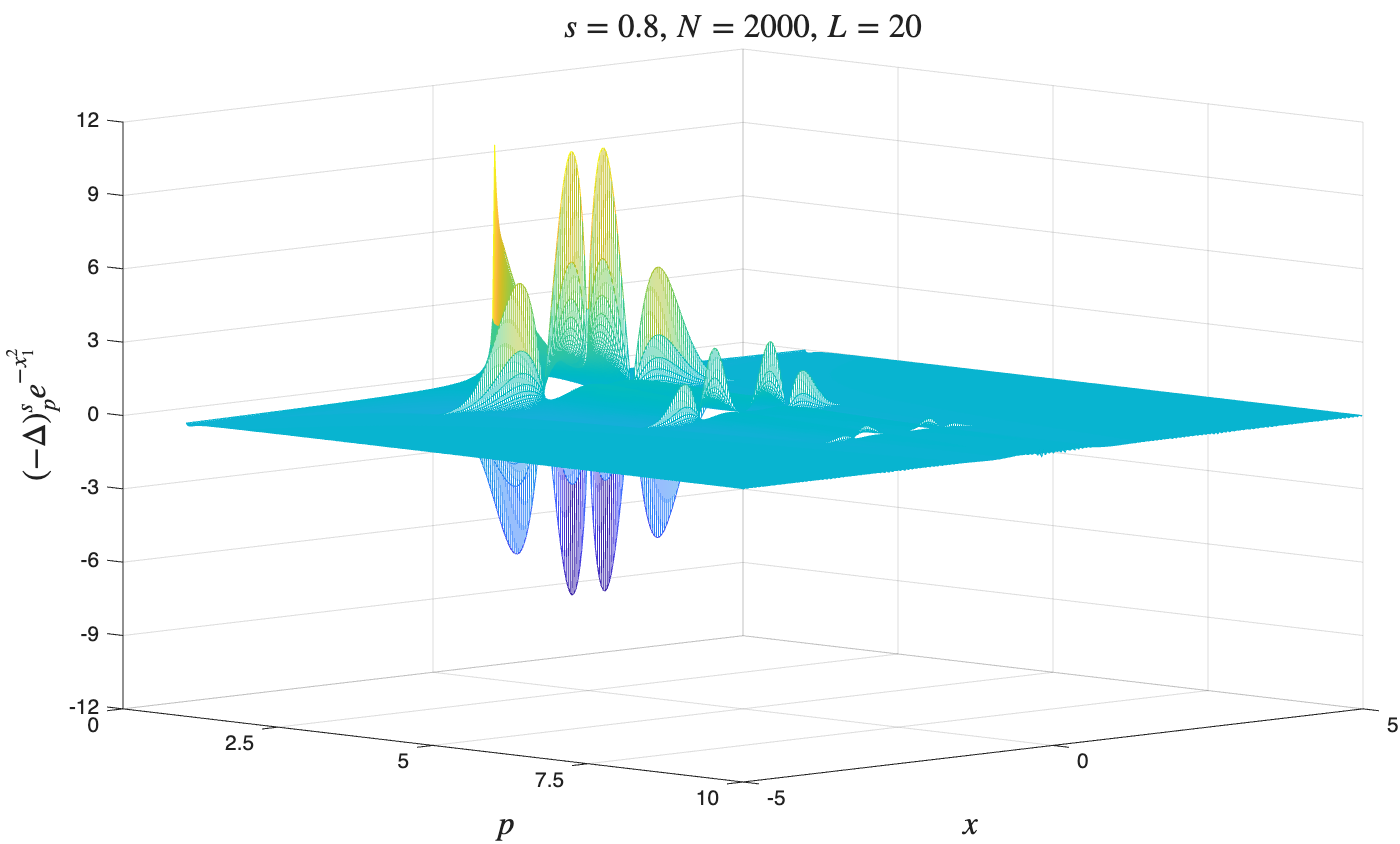}
	\caption{$(-\Delta)_p^s e^{-x_1^2}$, for $s = 0.8$, $N = 2000$, $L = 20$. Left: $p\in\{1.01, 1.02, \ldots, 2.49\}$. Right: $p\in\{1.01, 1.02, \ldots, 9.99\}$.}
	\label{f:fracplapexpn1all}
\end{figure}

On the other hand, when $p\in (1, 2.5)$, two distinct regimes can be observed: one for $p < 2$ and another for $p > 2$. In the former case, the solution exhibits a single peak, whereas in the latter it develops two peaks, as shown in Figure~\ref{f:fracplapexpn1p}. Moreover, this transition from a single peak when $p < 2$ to multiple peaks when $p > 2$ also appears to persist in higher dimensions. This behavior is illustrated in Figure~\ref{f:fracplapexpn2}, which corresponds to the two-dimensional case with $s  = 0.8$, $N = 250$ and $L = 4$. For comparison, in the case with $p = 2$, the exact solution is again known, and the error is equal to $1.2875\times 10^{-10}$.

\begin{figure}
	\centering
	\includegraphics[width=0.5\textwidth]{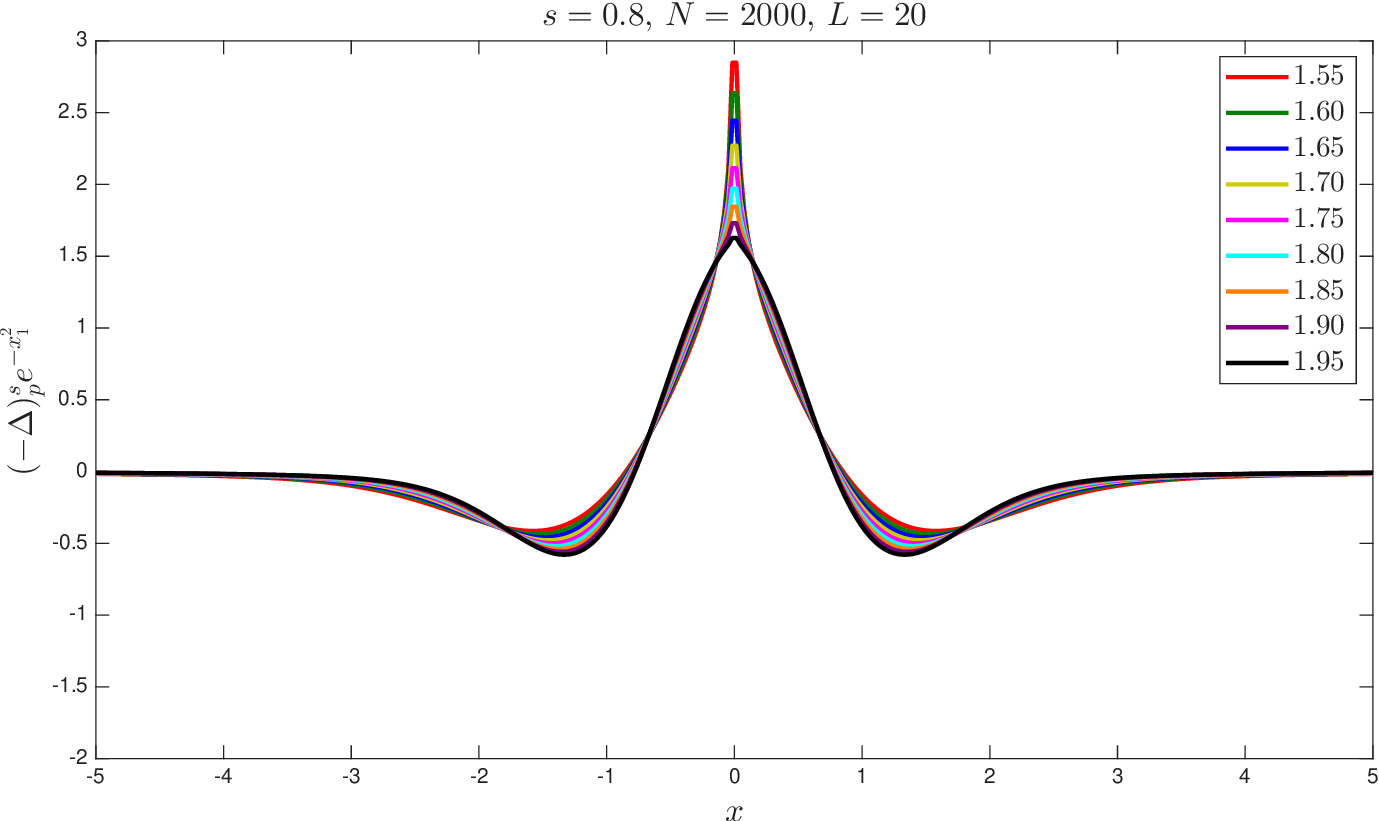}\includegraphics[width=0.5\textwidth]{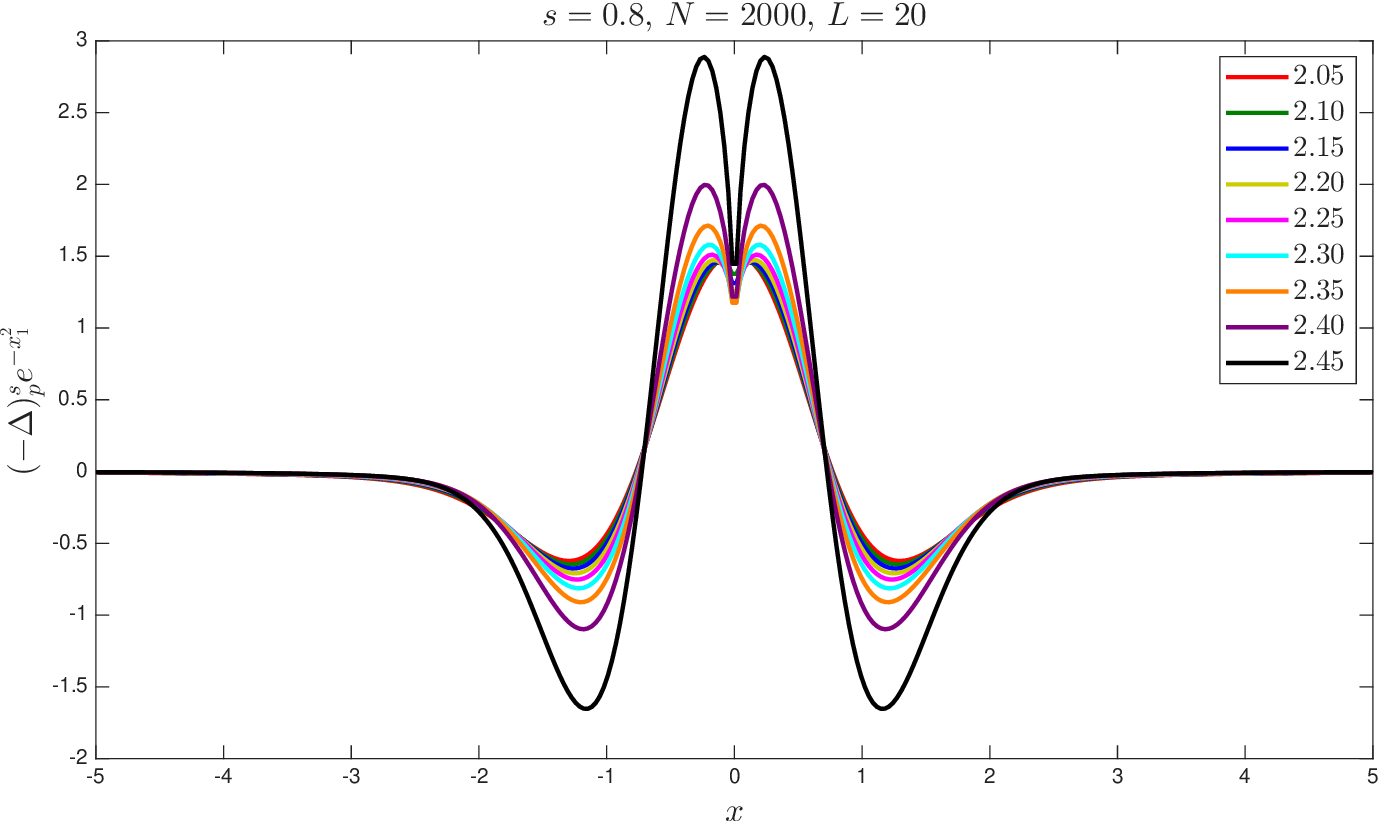}
	\caption{$(-\Delta)_p^s e^{-x_1^2}$, for $s = 0.8$, $N = 2000$, $L = 20$. Left: $p\in\{1.60, 1.65, \ldots, 1.95\}$. Right: $p\in\{2.05, 2.10, \ldots, 2.45\}$.}
	\label{f:fracplapexpn1p}
\end{figure}

\begin{figure}
	\centering
	\includegraphics[width=0.5\textwidth]{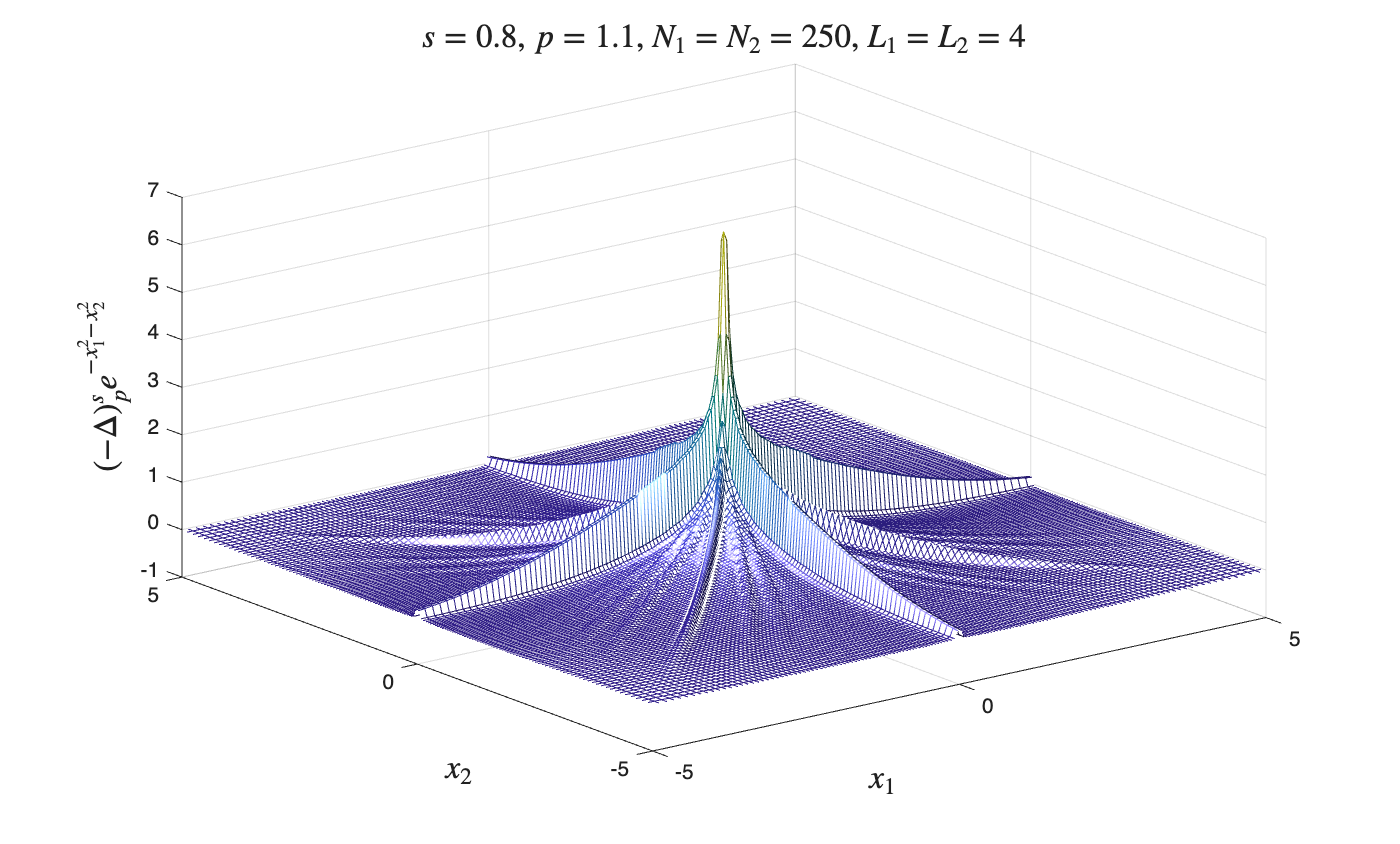}\includegraphics[width=0.5\textwidth]{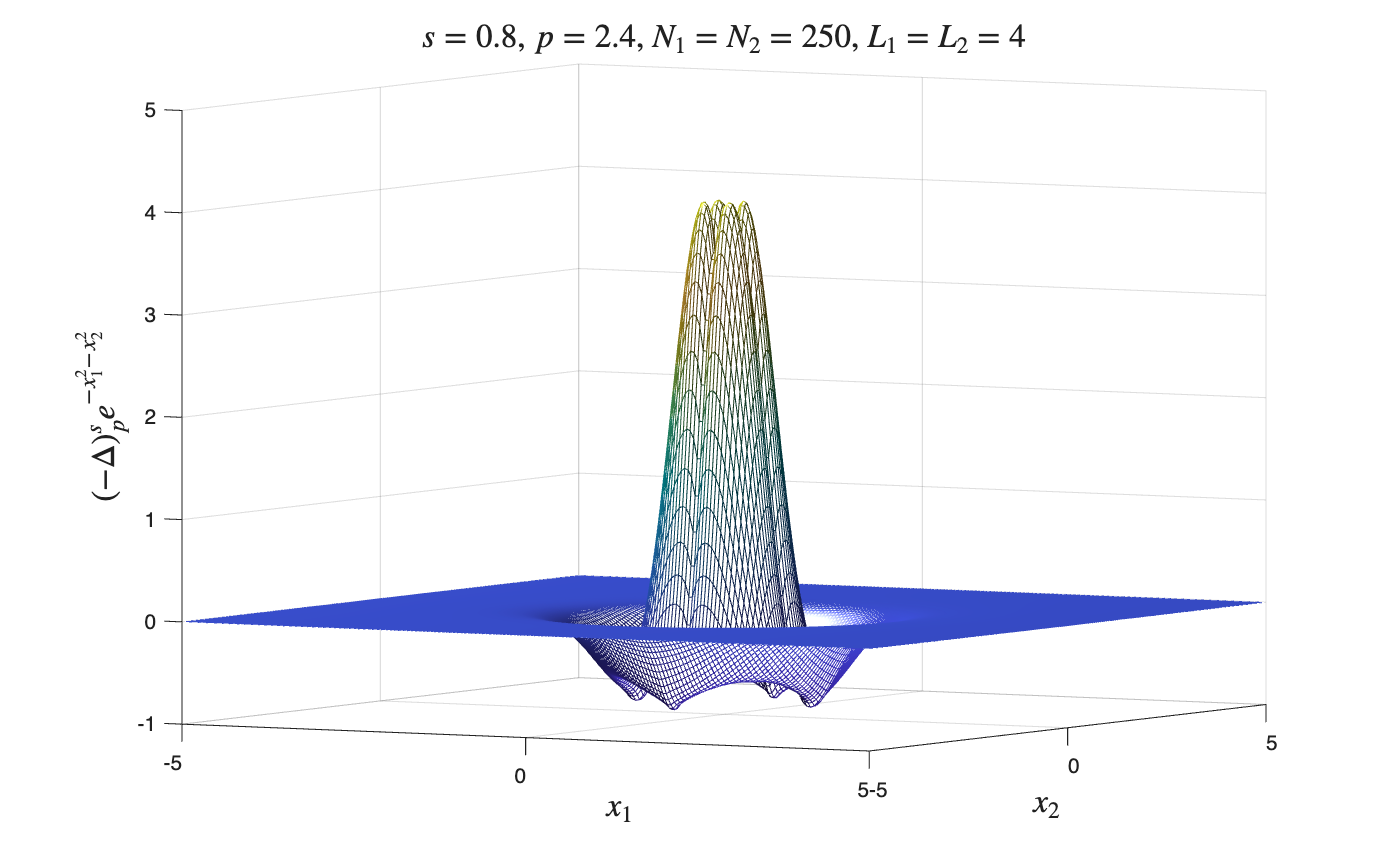}
	\caption{$(-\Delta)_p^se^{-x_1^2-x_2^2}$, for $s = 0.8$, $N = 250$, $L = 4$. Left: $p = 1.1$. Right: $p = 2.4$.}
	\label{f:fracplapexpn2}
\end{figure}

\section{Numerical experiments for the evolution equation of the fractional $p$-Laplacian}
\label{s:evolution}

In this section, we consider the following evolution equation:
\begin{equation}\label{FpL:evolution:eq}
\left\{
\begin{aligned}
& \frac{\partial u}{\partial t} + (-\Delta)^s_p u = 0, \quad t>0,\ \vx\in \R^n,
	\cr
& u(\vx,0) = u_0(\vx), \quad \vx\in \R^n.
\end{aligned}
\right.
\end{equation}
We have approximated $(-\Delta)^s_p$ numerically using our method, taking, as in Section~\ref{s:numericalfracLap}, $N = N_1 = \ldots = N_n$ and $L = L_1 = \ldots = L_n$. Then, we have employed a classical fourth-order Runge--Kutta scheme for the time integration of \eqref{FpL:evolution:eq}. Our goal is to illustrate numerically the convergence of solutions towards the self-similar profile as $t\to\infty$, as established analytically in \cite{Vazquez2020, Vazquez2021, Vazquez2022}.

We summarize briefly what to expect according to these analytical results. Let
\begin{equation*}
p_c = \frac{2n}{n + s}, \quad s\in(0,1).
\end{equation*}
If $p>p_c$, and, initially, $u_0 \in L^1(\R^n)$, then the solution to the initial value problem conserves its initial mass
\begin{equation*}
M = \int_{\R^n} u_0(\vx)d\vx,
\end{equation*}
i.e.,
\begin{equation*}
\int_{\R^n} u(\vx,t)d\vx = M, \quad \forall t > 0,
\end{equation*}
and converges to the self-similar solution $u_M$ in, for instance, the following sense:
\begin{equation}
\label{e:limtuuM}
\lim_{t \to \infty} \|u(\cdot,t) - u_M(\cdot,t)\|_1 = 0,
\end{equation}
where the function $u_M$ has the form
\begin{equation}\label{e:ss}
u_M(\vx,t) = M^{sp\beta} t^{-\alpha}F(\|\vr\|_2), \mbox{ with } \vr(\vx, t) =  M^{(2-p)\beta} t^{-\beta} \vx,\quad \alpha = \beta n,\quad
\beta = \frac{1}{sp - n(2-p)}.
\end{equation}
The existence of $F$, and the convergence \eqref{e:limtuuM}, hold under certain restrictions on $s$ and $p$. Specifically, $F$ is a continuous, positive, radially symmetric, and decreasing function, defined for $p > p_c$, that decays to $0$ as $\|\vr\|_2\to\infty$ according to
\begin{equation}\label{e:norm:fast:diff}
F(\|\vr\|_2) \approx \|\vr\|_2^{-n-sp}, 
\quad \text{when } p > 2 \text{ or } p \in (p_1,2),
\end{equation}
and
\begin{equation}\label{e:veryfast:diff}
F(\|\vr\|_2) \approx \|\vr\|_2^{-sp/(2-p)}, \quad \text{for } p \in (p_c,p_1),
\end{equation}
where $p_1\in(p_c,2)$ is the value for which the exponents in \eqref{e:norm:fast:diff} and \eqref{e:veryfast:diff} coincide, and is defined implicitly as 
\begin{equation*}
s\, p_1 (p_1 - 1) = n(2 - p_1),
\end{equation*}
i.e.,
\begin{equation*}
p_1 = \frac{s - n + \sqrt{n^2 + 6ns + s^2}}{2s}.
\end{equation*}
In what follows, we present numerical examples for $n = 1$ and $n = 2$, and for the three regimes that arise, according to the value of $p$, namely $p\in(p_c,p_1)$, $p\in(p_1,2)$ and $p>2$; the first regime is dictated by \eqref{e:veryfast:diff}, whereas the second and third regimes are dictated by \eqref{e:norm:fast:diff}. Moreover, in our numerical experiments, we have restricted the range of parameters so that $sp < 2$ (see Section~\ref{s:numericalfracpLap}); hence, in the third regime, $p\in(2,2/s)$.

To confirm numerically the self-similar behavior predicted by \eqref{e:limtuuM}--\eqref{e:ss}, we have considered in all the experiments the following initial condition:
\begin{equation}\label{init:evol}
u_0(\vx) = \exp\left(-\sum_{i=1}^n x_i^2\right).
\end{equation}
Observe that, in this case, when $n = 1$, performing the change of variable $x = y^{1/2}$, the mass can be computed explicitly:
\begin{equation*}
M = \int_\R u_0(x)dx = \int_{-\infty}^{\infty}e^{-x^2}dx = 2\int_0^{\infty}e^{-x^2}dx = \int_0^{\infty}y^{1/2-1}e^{-y}dy = \Gamma(1/2) = \sqrt{\pi},
\end{equation*}
and, in general, when $n\in\mathbb N$, applying Fubini,
\begin{equation}
\label{e:M}
M = \int_{\R^n}u_0(\vx)d\vx = \int_{\R^n}e^{-x_1^2 - \ldots - x_n^2}d\vx = \left(\int_{-\infty}^{\infty}e^{-x^2}dx\right)^n = \pi^{n/2}, \quad n\in\mathbb N.
\end{equation}
After computing the numerical solution, we transform it into the self-similar variables given in \eqref{e:ss}. With respect to $M$, instead of using its exactly predicted value \eqref{e:M}, we have approximated it spectrally. More precisely, if $f:\R\to \CC$, performing the change of variable $x = L\cot(\xi)$, 
\begin{equation}
\label{e:int1D}
\int_{-\infty}^{\infty}f(x)dx = L\int_{0}^{\pi}\frac{f(L\cot(\xi))}{\sin^2(\xi)}d\xi \approx \frac{L\pi}{N}\sum_{j=1}^N\frac{f(L\cot(\xi_j))}{\sin^2(\xi_j)},
\end{equation}
where the nodes $\{\xi_j\}$ are given by \eqref{e:xi}, i.e., we have applied the midpoint rule, which gives us spectral accuracy if the $\pi$-periodic function $f(L\cot(\xi_j))/\sin^2(\xi_j)$ is regular. Likewise, if $f:\R^2\to \CC$, performing the changes of variable $x_1 = L\cot(\xi_1)$ and $x_2 = L\cot(\xi_2)$,
\begin{align}
\label{e:int2D}
\int_{-\infty}^{\infty}\int_{-\infty}^{\infty}f(x_1,x_2)dx_1dx_2 & = L^2\int_{0}^{\pi}\int_{0}^{\pi}\frac{f(L\cot(\xi_1),L\cot(\xi_2))}{\sin^2(\xi_1)\sin^2(\xi_2)}d\xi_1d\xi_2
	\cr
& \approx \frac{L^2\pi^2}{N^2}\sum_{j_1=1}^N\sum_{j_2=1}^N\frac{f(L\cot(\xi_{1,j_1}),L\cot(\xi_{2,j_2}))}{\sin^2(\xi_{1,j_1})\sin^2(\xi_{2,j_2})},
\end{align}
where the nodes are defined as in \eqref{e:xi} for each coordinate.

At this point, it is important to note that the choice of $L$ plays an important role in the accuracy of the quadrature formulas in \eqref{e:int1D} (for $n = 1$) and \eqref{e:int2D} (for $n = 2$). To illustrate this, we have computed the absolute value of the difference between $\sqrt\pi$ and the approximation of the integral of  \eqref{init:evol} (for $n = 1$) over $\R$; and  the absolute value of the difference between $\pi$ and the approximation of the integral of \eqref{init:evol} (for $n = 2$) over $\R^2$. On the left-hand side of Figure~\ref{f:errormass}, we have plotted those errors, for $N = 50$, and $L \in \{0.01, 0.02, \ldots, 50\}$; and on the right-hand side of Figure~\ref{f:errormass}, we have done the same, for $N = 1000$, and $L \in\{0.01, 0.02, \ldots, 1000\}$. As can be seen, the set of values of $L$ for which spectral accuracy is achieved is much larger for $N = 1000$ than for $N = 50$, and, in general, when $L$ attains a certain value, the accuracy quickly diminishes. This observation is relevant, because, in order to capture numerically the self-similar behavior of the solutions of \eqref{FpL:evolution:eq}, large values of $L$ may be required, and the simulation may have to be run until large times $t$. Hence, if additionally we want to compute accurately the mass, the computational time may become prohibitive, especially in dimensions $n > 1$. In practice, however, this does not pose a problem, because, in the numerical simulation of the evolution of \eqref{FpL:evolution:eq}, we have found that the approximation of the mass given by \eqref{e:int1D} and \eqref{e:int2D} is well preserved over time, even in those cases where the real value of the mass is not correctly approximated. Therefore, since we do not know an analytical solution for \eqref{FpL:evolution:eq}, and we are only interested in showing that the self-similar behavior in the numerical solutions happens as $t$ grows, taking in \eqref{e:ss} for each time the numerical approximation of the mass given by \eqref{e:int1D} (for $n = 1$) and \eqref{e:int2D} (for $n = 2$) is enough. For clarity, we denote this time-dependent numerical approximation by $M(t)$, to distinguish it from the exact (constant) mass $M$.

\begin{figure}
	\centering
	\includegraphics[width=0.5\textwidth]{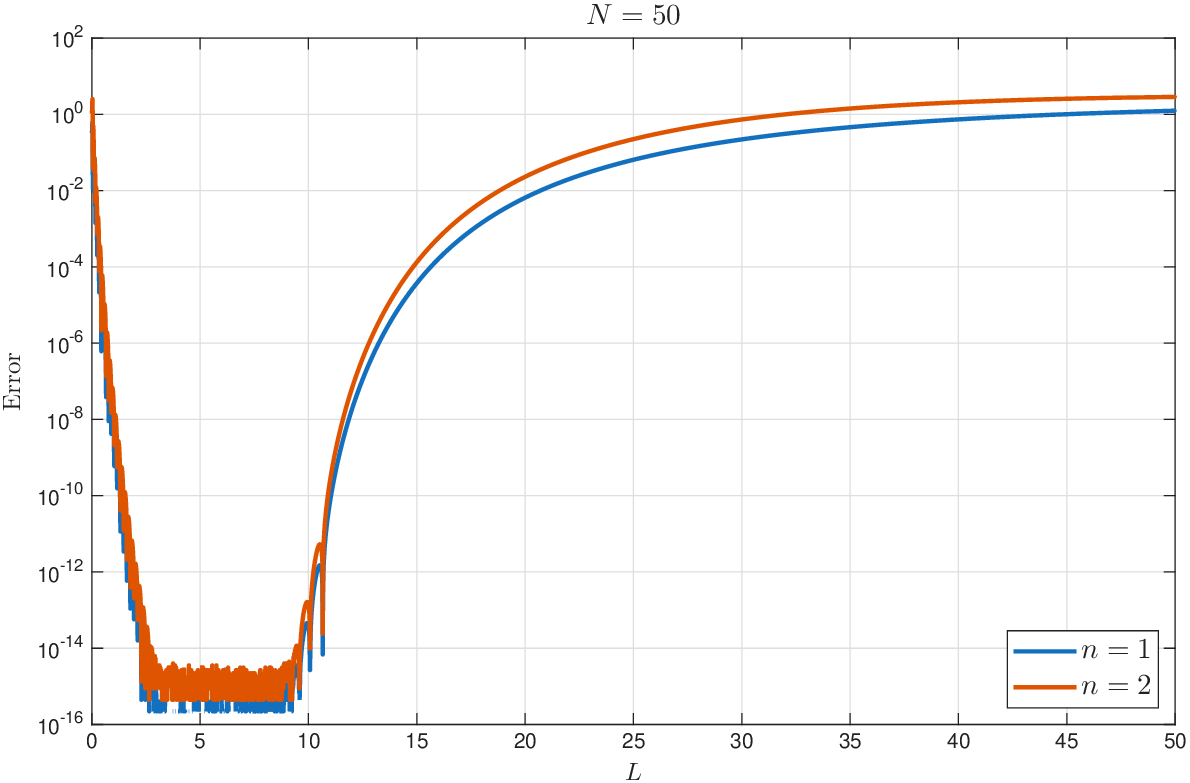}\includegraphics[width=0.5\textwidth]{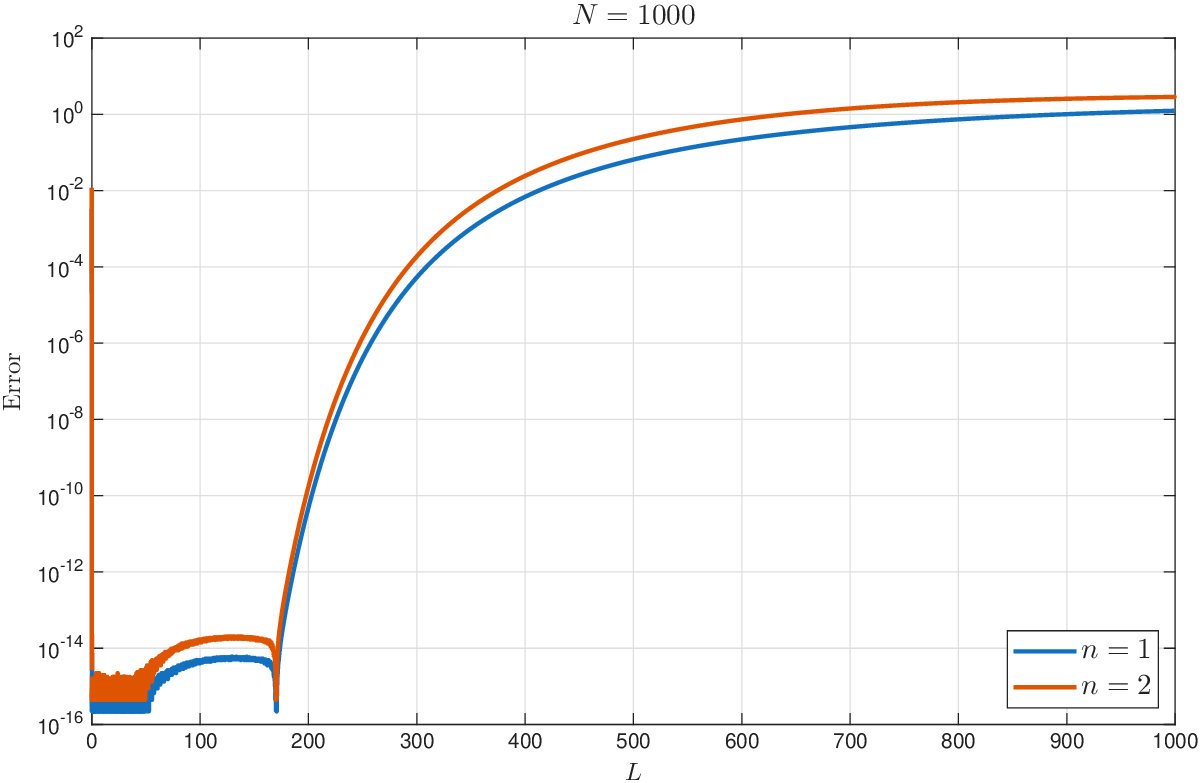}
	\caption{Error in the spectral approximation of the mass $M$ in \eqref{e:M}, computed by means of \eqref{e:int1D} (when $n = 1$) and \eqref{e:int2D} (when $n = 2$). Left: $N = 50$. Right: $N = 1000$.}
	\label{f:errormass}
\end{figure}

Bearing in mind \eqref{e:ss}, we have performed twelve numerical experiments, six for $n = 1$ and six for $n = 2$. In each case, we have considered two values of $s$, one close to zero and one close to $1$, namely $s = 0.2$ and $s = 0.8$. Furthermore, for each setting, we have chosen three values of $p$, satisfying respectively $p\in(p_c, p_1)$, $p\in(p_1, 2)$, and $p\in(2, 2/s)$. In the one-dimensional case, we have plotted $v(x,t) = u(x,t)M(t)^{-sp\beta}t^\alpha$ against $r(x,t) = M(t)^{(2-p)\beta}t^{-\beta}x$. In the two-dimensional case, we have exploited the radial symmetry and restricted the visualization to the section $y = 0$; therefore, we have plotted $v(x,t) = u(x,0,t)M(t)^{-sp\beta}t^\alpha$ versus $r(x,t) = M(t)^{(2-p)\beta}t^{-\beta}x$. The comparison of these profiles, displayed in Figures~\ref{f:n1first}--\ref{f:n2third}, shows that for sufficiently large times they overlap, thereby providing consistent numerical evidence of convergence towards the self-similar profile as $t \to \infty$. Figures~\ref{f:n1first}--\ref{f:n1third} correspond to the one-dimensional case, whereas Figures~\ref{f:n2first}--\ref{f:n2third} correspond to the two-dimensional case. Moreover, Figures~\ref{f:n1first} and~\ref{f:n2first} correspond to the case $p\in(p_c, p_1)$; Figures~\ref{f:n1second} and~\ref{f:n2second} to the case $p\in(p_1, 2)$; and Figures~\ref{f:n1third} and~\ref{f:n2third} to the case $p\in(2, 2/s)$. In all figures, the left panel corresponds to $s = 0.2$ and the right panel to $s = 0.8$. The choice of $N$, $L$, the time step $\Delta t$, and the time range over which the self-similar profiles can be observed varies considerably from one experiment to another. Some cases pose no particular difficulty, whereas others are very challenging. For instance, on the left-hand side of Figure~\ref{f:n1third}, where $n = 1$, $s = 0.2$, and $p\in(2,2/s)$, taking $\Delta t = 1$ is sufficient to reach large times and obtain the self-similar profiles with great accuracy. By contrast, on the left-hand side of Figure~\ref{f:n2first}, where $n = 2$, $s = 0.2$, and $p\in(p_c,p_1)$, capturing the self-similar behavior has required a very large value of $L = 10^4$, which causes the horizontal axis to be anomalously compressed and the vertical axis to be anomalously stretched. We acknowledge that this last figure is visually poor; however, it required more than a full day of computation to generate, and obtaining a better-tuned result would require a significantly larger value of $N$, which would make the computational cost prohibitive. We therefore regard the figures presented here as sufficient evidence of the self-similar behavior, and consider the production of higher-quality figures in the most challenging parameter regimes to lie beyond the scope of this paper.

\begin{figure}
	\centering
	\includegraphics[width=0.5\textwidth]{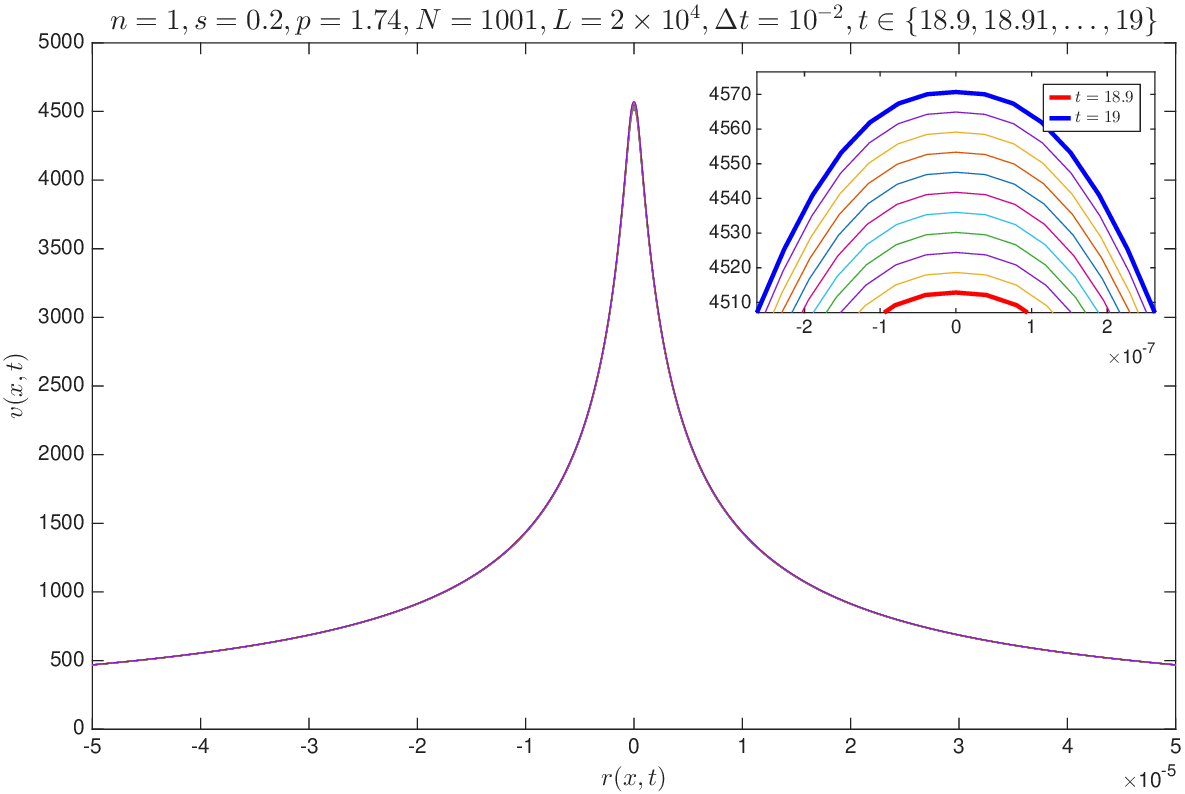}\includegraphics[width=0.5\textwidth]{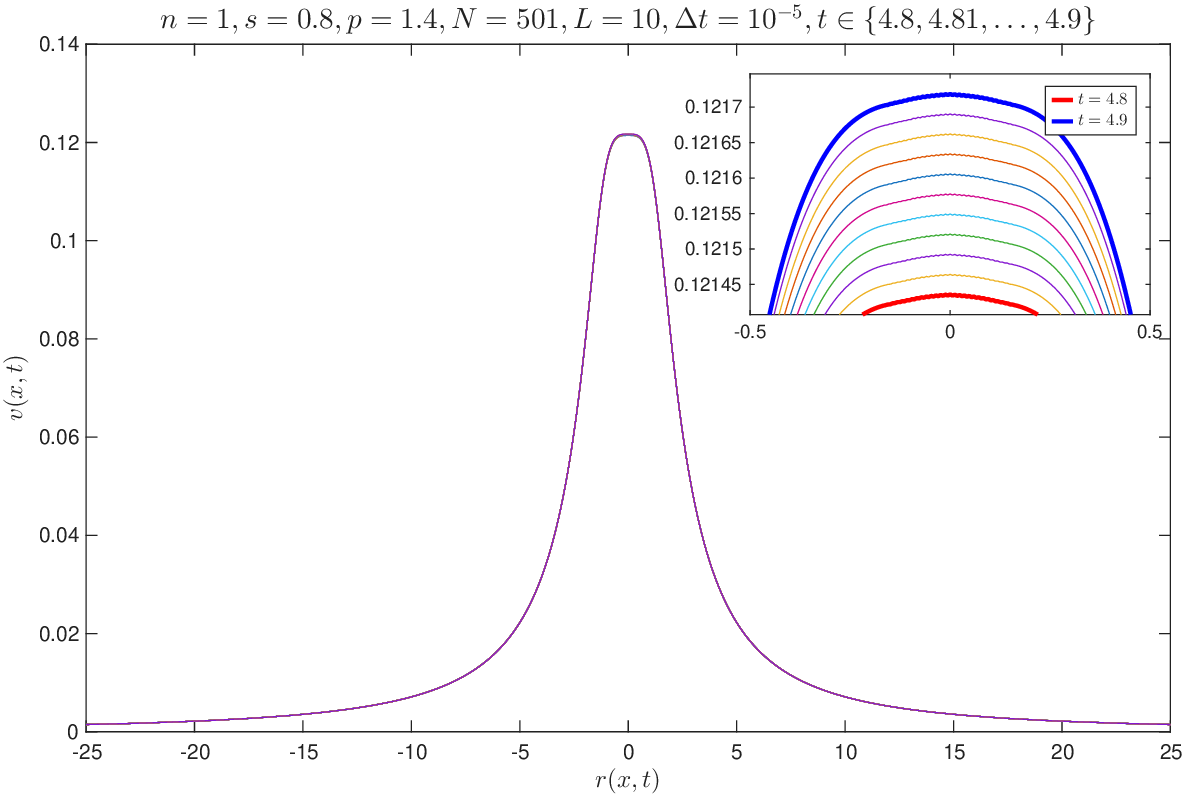}
	\caption{Solutions in the self-similar variables $v(x,t) = u(x,t)M(t)^{-sp\beta}t^\alpha$ and $r(x,t) = M(t)^{(2-p)\beta}t^{-\beta}x$, for $n = 1$ and $p\in(p_c, p_1)$. Left: $s = 0.2$, $p = 1.74$, with $N = 1001$, $L = 2\times10^4$, $\Delta t = 10^{-2}$, $t\in\{18.9, 18.91, \ldots, 19\}$. Right: $s = 0.8$, $p = 1.4$, with $N = 501$, $L = 10$, $\Delta t = 10^{-5}$, $t\in\{4.8, 4.81, \ldots, 4.9\}$.}
	\label{f:n1first}
\end{figure}

\begin{figure}
	\centering
	\includegraphics[width=0.5\textwidth]{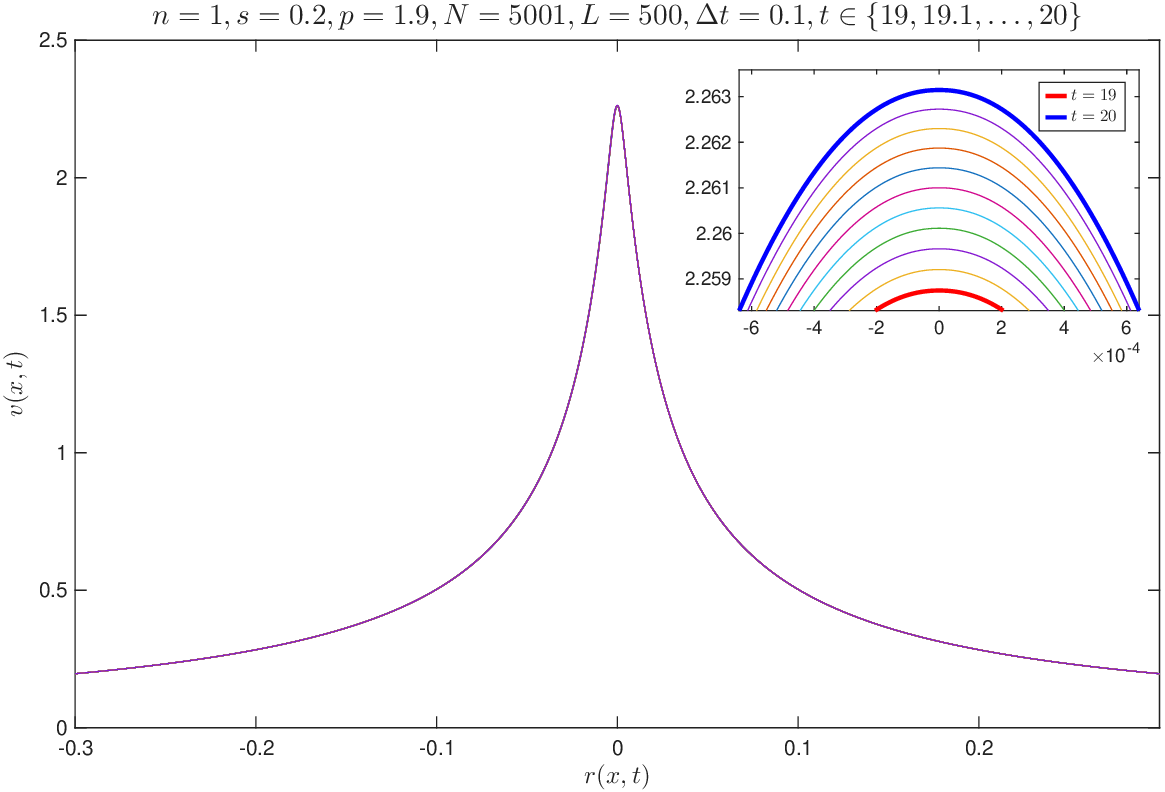}\includegraphics[width=0.5\textwidth]{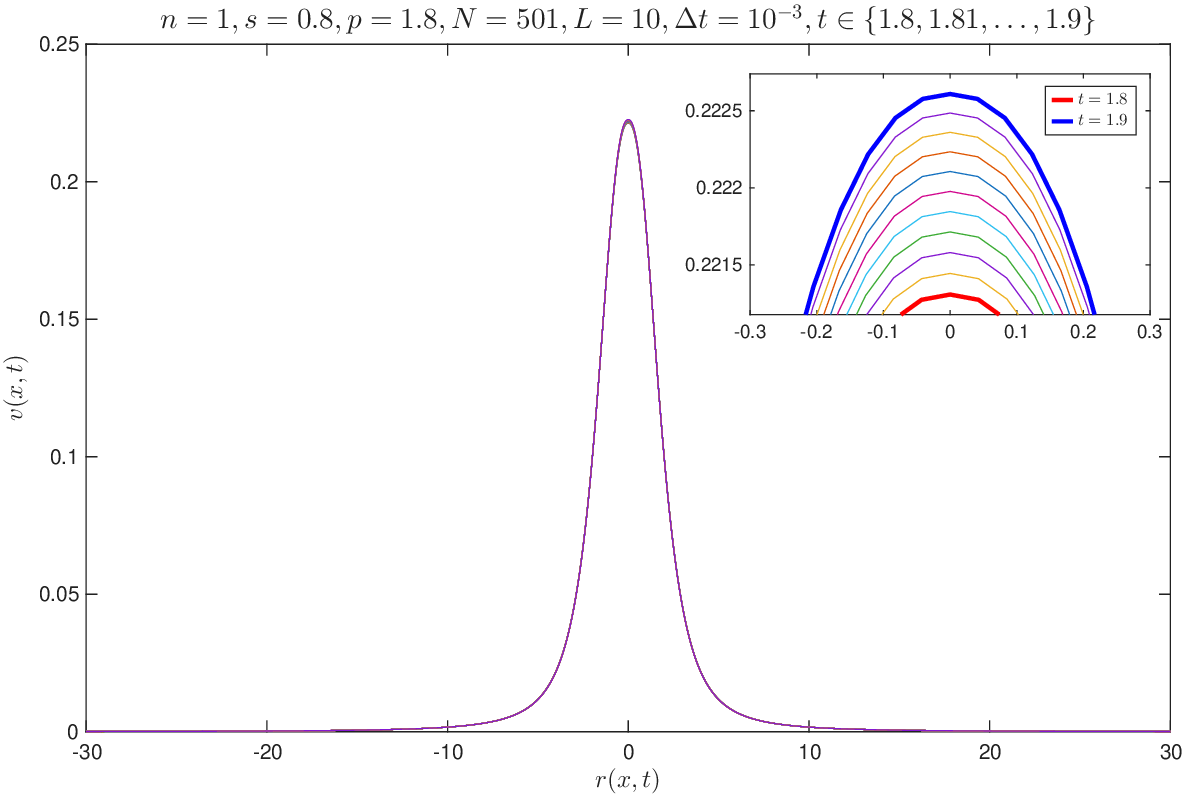}
	\caption{Solutions in the self-similar variables $v(x,t) = u(x,t)M(t)^{-sp\beta}t^\alpha$ and $r(x,t) = M(t)^{(2-p)\beta}t^{-\beta}x$, for $n = 1$ and $p\in(p_1, 2)$. Left: $s = 0.2$, $p = 1.9$, with $N = 5001$, $L = 500$, $\Delta t = 0.1$, $t\in\{19, 19.1, \ldots, 20\}$. Right: $s = 0.8$, $p = 1.8$, with $N = 501$, $L = 10$, $\Delta t = 10^{-3}$, $t\in\{1.8, 1.81, \ldots, 1.9\}$.}
	\label{f:n1second}
\end{figure}

\begin{figure}
	\centering
	\includegraphics[width=0.5\textwidth]{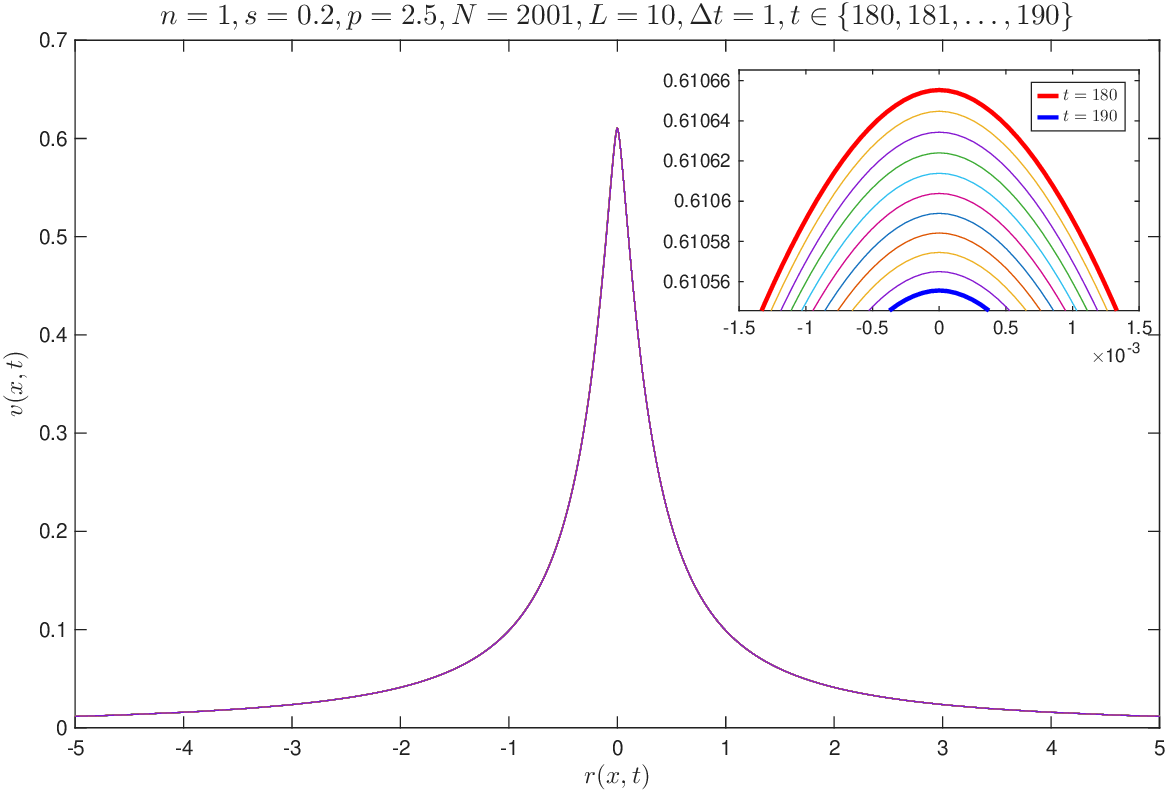}\includegraphics[width=0.5\textwidth]{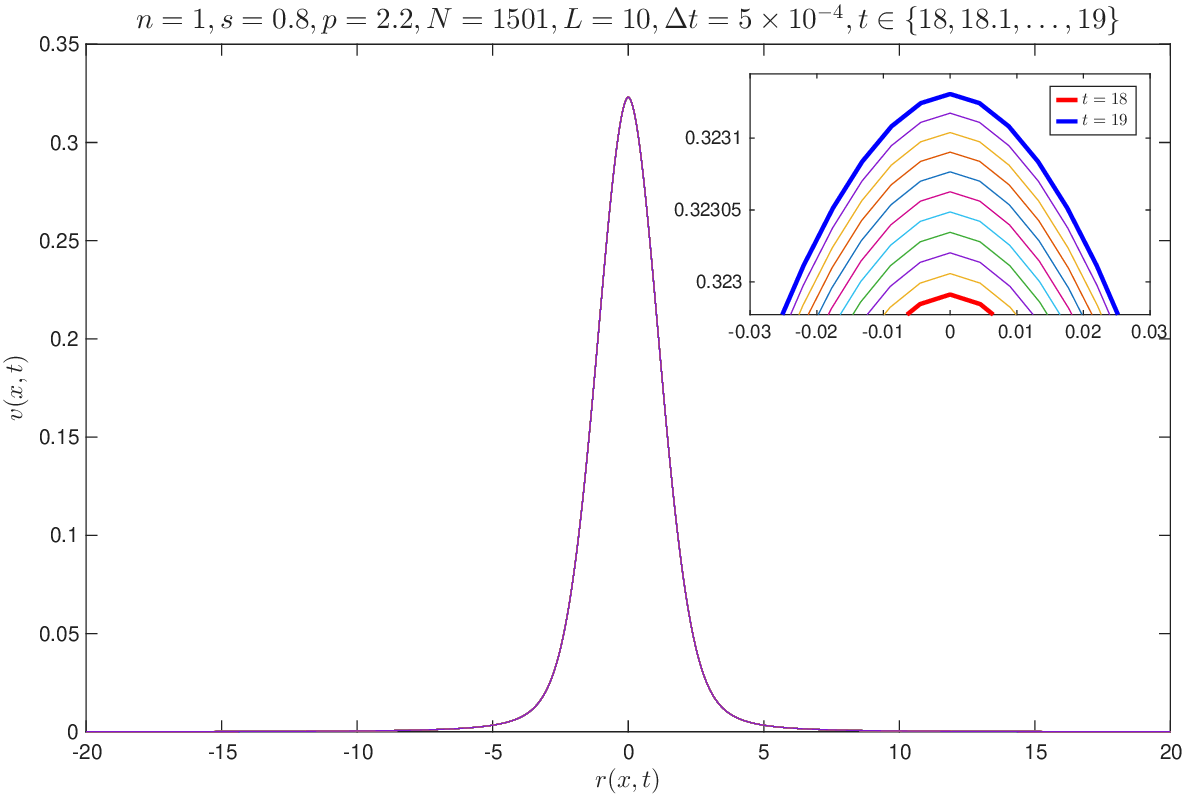}
	\caption{Solutions in the self-similar variables $v(x,t) = u(x,t)M(t)^{-sp\beta}t^\alpha$ and $r(x,t) = M(t)^{(2-p)\beta}t^{-\beta}x$, for $n = 1$ and $p\in(2,2/s)$. Left: $s = 0.2$, $p = 2.5$, with $N = 2001$, $L = 10$, $\Delta t = 1$, $t\in\{180, 181, \ldots, 190\}$. Right: $s = 0.8$, $p = 2.2$, with $N = 1501$, $L = 10$, $\Delta t = 5\times10^{-4}$, $t\in\{18, 18.1, \ldots, 19\}$.}
	\label{f:n1third}
\end{figure}

\begin{figure}
	\centering
	\includegraphics[width=0.5\textwidth]{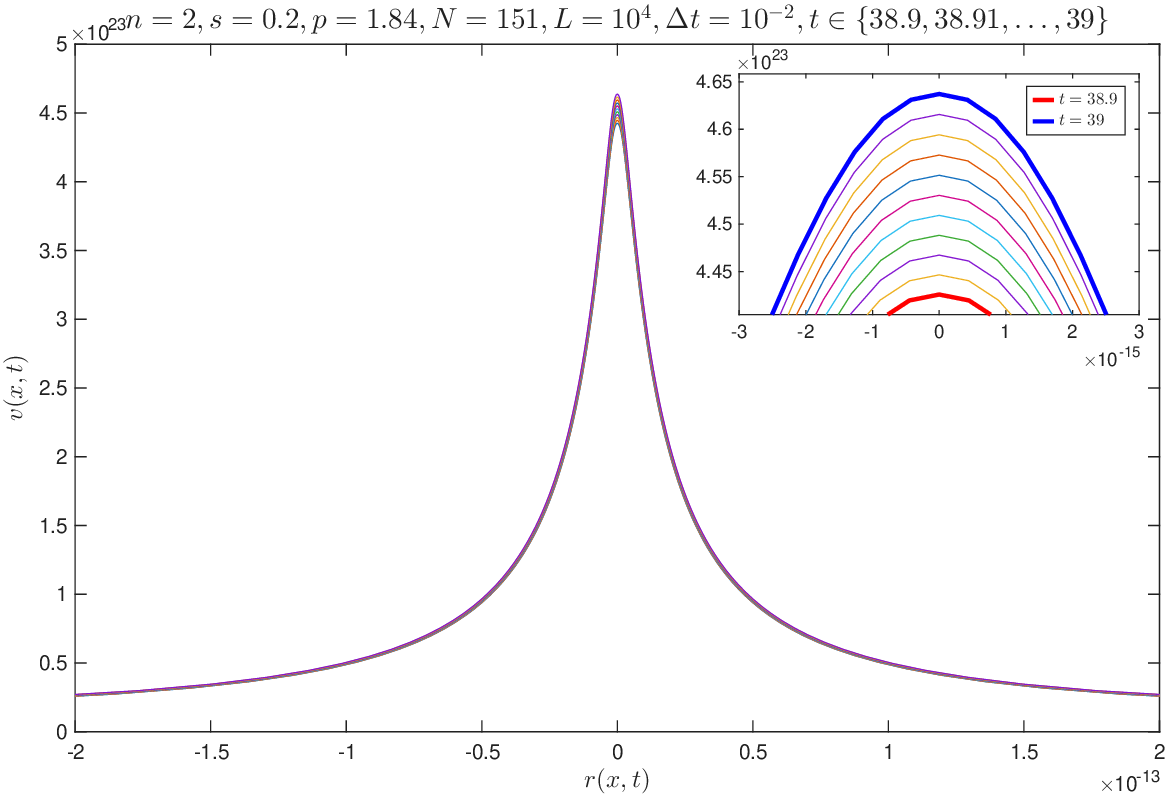}\includegraphics[width=0.5\textwidth]{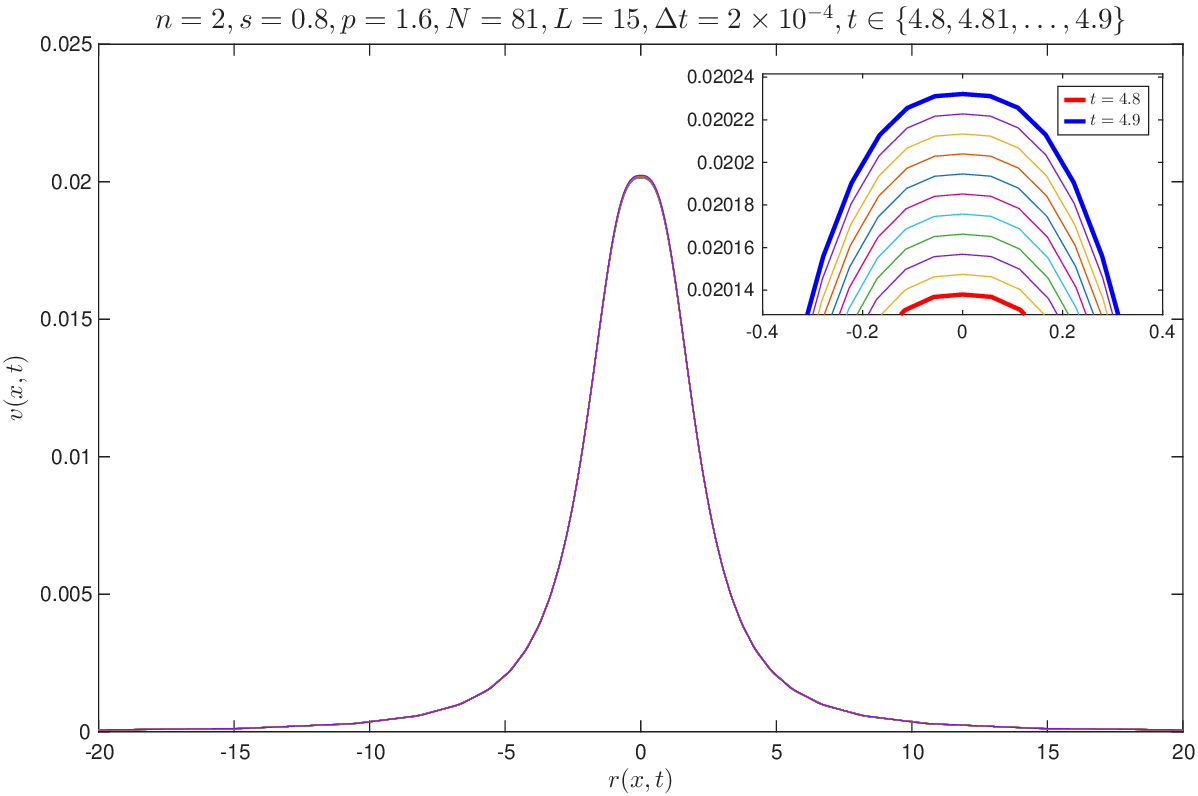}
	\caption{Solutions in the self-similar variables $v(x,t) = u(x,0,t)M(t)^{-sp\beta}t^\alpha$ and $r(x,t) = M(t)^{(2-p)\beta}t^{-\beta}x$, for $n = 2$ and $p\in(p_c, p_1)$. Left: $s = 0.2$, $p = 1.84$, with $N = 151$, $L = 10^4$, $\Delta t = 10^{-2}$, $t\in\{38.9, 38.91, \ldots, 39\}$. Right: $s = 0.8$, $p = 1.6$, with $N = 81$, $L = 15$, $\Delta t = 2\times10^{-4}$, $t\in\{4.8, 4.81, \ldots, 4.9\}$.}
	\label{f:n2first}
\end{figure}

\begin{figure}
	\centering
	\includegraphics[width=0.5\textwidth]{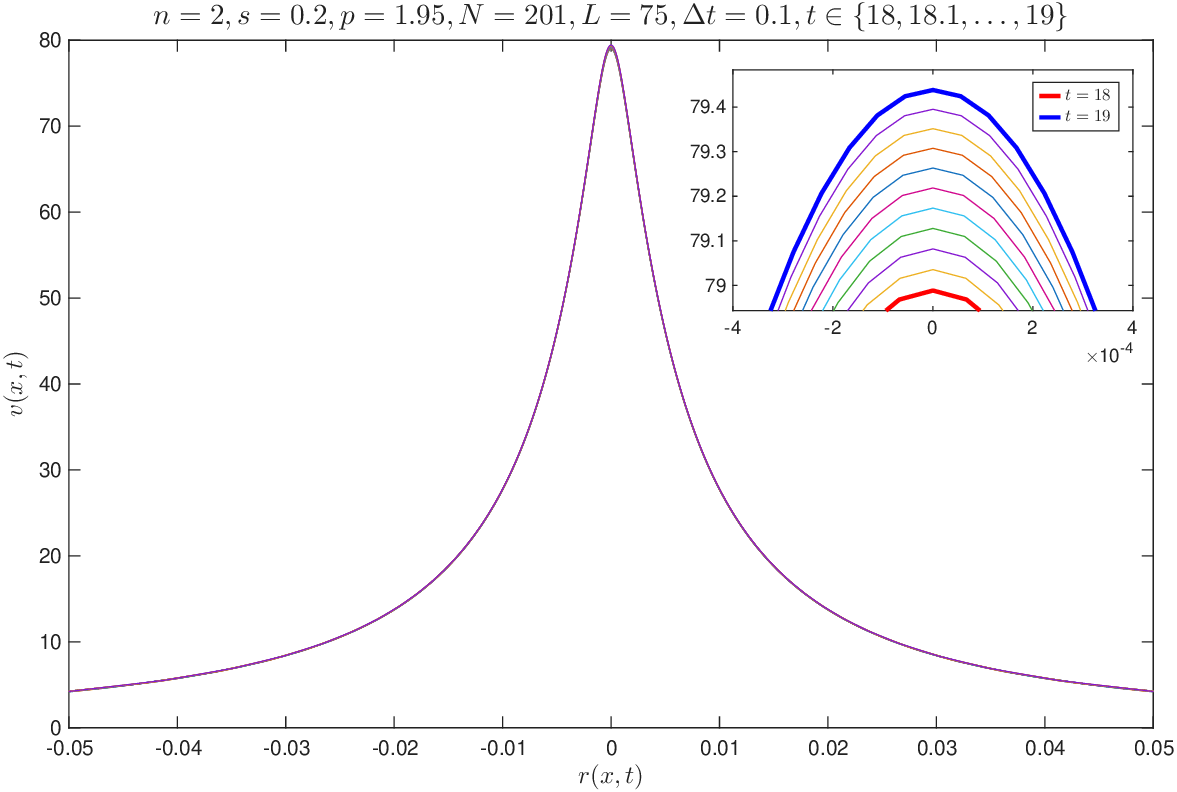}\includegraphics[width=0.5\textwidth]{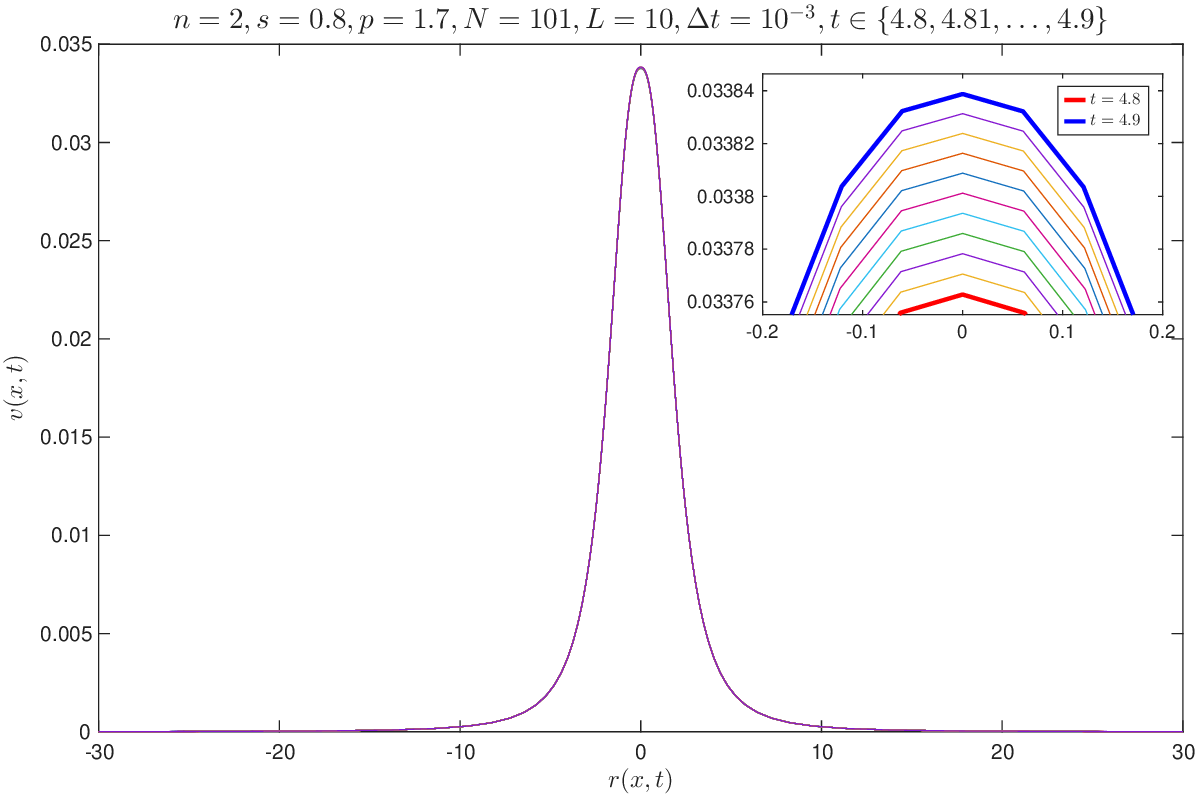}
	\caption{Solutions in the self-similar variables $v(x,t) = u(x,0,t)M(t)^{-sp\beta}t^\alpha$ and $r(x,t) = M(t)^{(2-p)\beta}t^{-\beta}x$, for $n = 2$ and $p\in(p_1, 2)$. Left: $s = 0.2$, $p = 1.95$, with $N = 201$, $L = 75$, $\Delta t = 0.1$, $t\in\{18, 18.1, \ldots, 19\}$. Right: $s = 0.8$, $p = 1.7$, with $N = 101$, $L = 10$, $\Delta t = 0.001$, $t\in\{4.8, 4.81, \ldots, 4.9\}$.}
	\label{f:n2second}
\end{figure}

\begin{figure}
	\centering
	\includegraphics[width=0.5\textwidth]{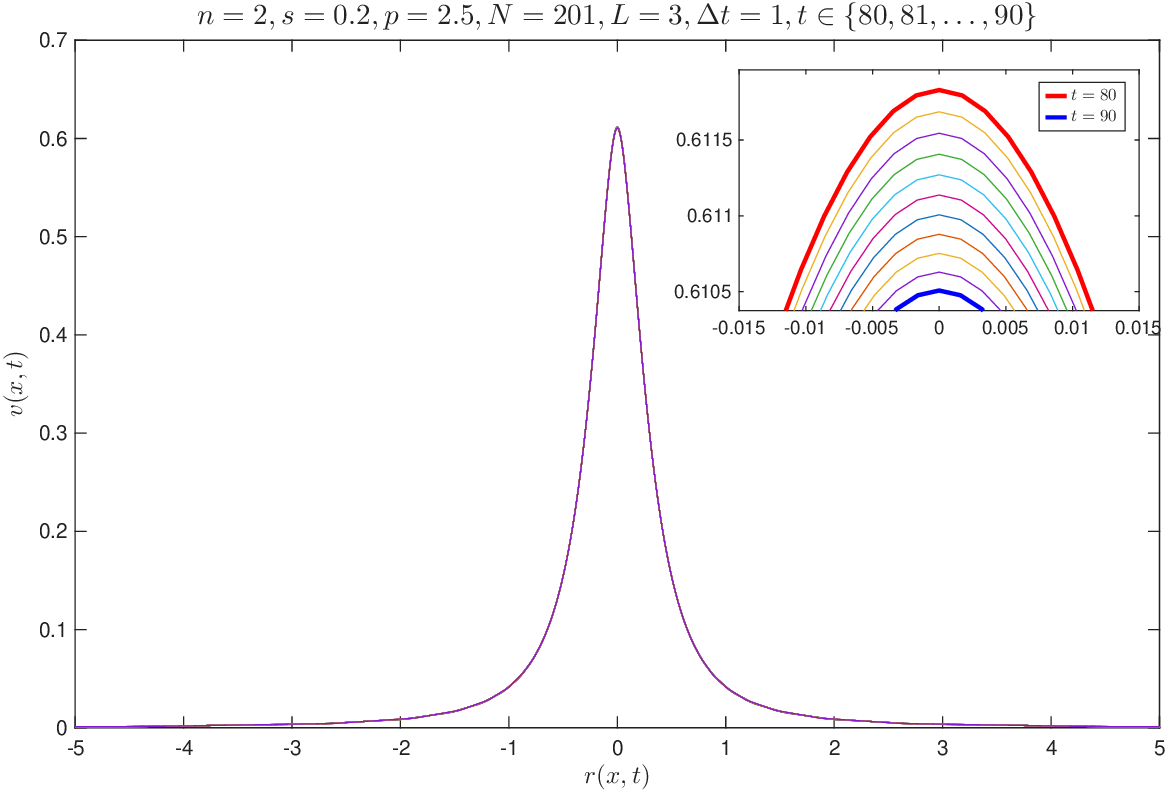}\includegraphics[width=0.5\textwidth]{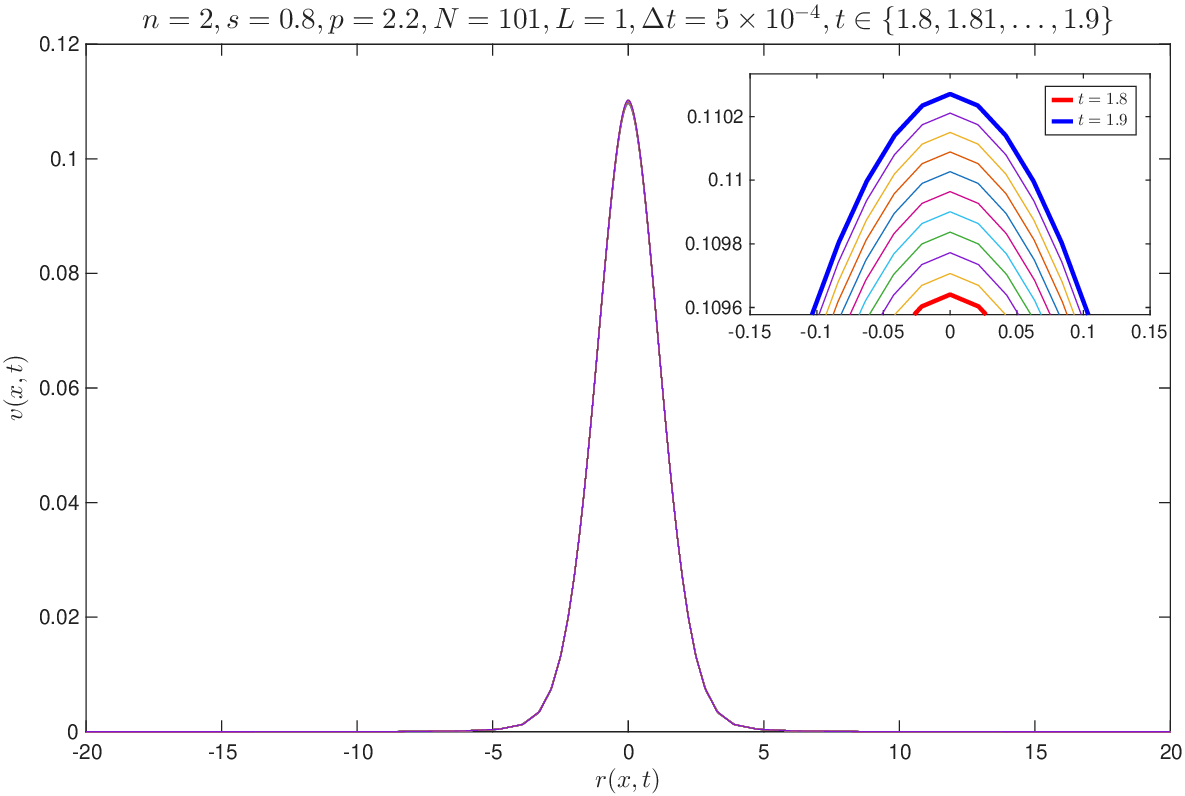}
	\caption{Solutions in the self-similar variables $v(x,t) = u(x,0,t)M(t)^{-sp\beta}t^\alpha$ and $r(x,t) = M(t)^{(2-p)\beta}t^{-\beta}x$, for $n = 2$ and $p\in(2, 2/s)$. Left: $s = 0.2$, $p = 2.5$, with $N = 201$, $L = 3$, $\Delta t = 1$, $t\in\{80, 81, \ldots, 90\}$. Right: $s = 0.8$, $p = 2.2$, with $N = 101$, $L = 1$, $\Delta t = 5\times10^{-4}$, $t\in\{1.8, 1.81, \ldots, 1.9\}$.}
	\label{f:n2third}
\end{figure}

\appendix

\section{Auxiliary lemmas}

\label{a:aux}

\begin{lemma}
\label{lem:I1}
Let $\mu < 0$ and $s\in(0,1)$. Then,
\begin{equation}
\label{e:I1}
I_1(\mu, s) = \int_0^\infty\frac{\mu t^{s-1}}{t-\mu}dt = -\pi\csc(\pi s)(-\mu)^s.
\end{equation}
\end{lemma}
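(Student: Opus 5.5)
We reduce the integral to the classical Beta-function identity by a scaling substitution. Write $a = -\mu > 0$, so that
\begin{equation*}
I_1(\mu,s) = \int_0^\infty\frac{\mu t^{s-1}}{t-\mu}dt = -a\int_0^\infty\frac{t^{s-1}}{t+a}dt.
\end{equation*}
First we check convergence. Near $t=0$ the integrand behaves like $t^{s-1}/a$, which is integrable because $s>0$. As $t\to\infty$ it behaves like $t^{s-2}$, which is integrable because $s<1$. So the integral converges absolutely and no principal value is needed.

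Next we substitute $t = a\tau$, which gives
\begin{equation*}
\int_0^\infty\frac{t^{s-1}}{t+a}dt = a^{s-1}\int_0^\infty\frac{\tau^{s-1}}{1+\tau}d\tau,
\end{equation*}
hence $I_1(\mu,s) = -a^{s}J(s)$ with $J(s) = \int_0^\infty \tau^{s-1}(1+\tau)^{-1}d\tau$. It then remains to show that $J(s) = \pi/\sin(\pi s)$.

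For this we use the substitution $\tau = u/(1-u)$, $u\in(0,1)$. Then $1+\tau = (1-u)^{-1}$ and $d\tau = (1-u)^{-2}du$, so
\begin{equation*}
J(s) = \int_0^1 u^{s-1}(1-u)^{-s}du = B(s,1-s) = \frac{\Gamma(s)\Gamma(1-s)}{\Gamma(1)} = \frac{\pi}{\sin(\pi s)}.
\end{equation*}
The last equality is Euler's reflection formula, which the paper already invokes when deriving $C(n,s,p)$. Combining the steps yields $I_1(\mu,s) = -\pi\csc(\pi s)(-\mu)^s$.

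The only nonroutine ingredient is the evaluation of $J(s)$. If one prefers to avoid the Beta function, one can instead integrate $z^{s-1}/(1+z)$ around a keyhole contour. The residue at $z=-1$ is $e^{i\pi(s-1)}$, and the two sides of the branch cut differ by the factor $1-e^{2\pi i(s-1)}$, which leads to $J(s)=\pi/\sin(\pi s)$. In that route, the condition $0<s<1$ is exactly what makes the contributions of the small and large circles vanish.
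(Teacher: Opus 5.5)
Your proof is correct and follows essentially the same route as the paper's. Both use the scaling $t=-\mu x$, then the substitution $x=y/(1-y)$ to reach $B(s,1-s)$, and finally Euler's reflection formula; your explicit absolute-convergence check and the optional keyhole-contour alternative are small additions that the paper leaves implicit.
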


\begin{proof}

We apply first the change of variable $t = -\mu x$, then
\begin{equation*}
I_1(\mu, s) = \int_0^\infty\frac{\mu (-\mu x)^{s-1}}{-\mu x-\mu}(-\mu)dx =  -(-\mu)^s\int_0^\infty\frac{x^{s-1}}{x + 1}dx.
\end{equation*}
Now, we apply the change of variable $x = y / (1 - y) \Longleftrightarrow y = x / (1 + x)$, then
\begin{equation*}
I_1(\mu, s) = -(-\mu)^s\int_0^1\frac{[y/(1-y)]^{s-1}}{y/(1-y) + 1}\frac{dy}{(1 - y)^2} = -(-\mu)^s\int_0^1y^{s-1}(1-y)^{1-s-1}dy = -(-\mu)^sB(s, 1 - s),
\end{equation*}
where $B(\cdot,\cdot)$ denotes Euler's beta function:
\begin{equation*}
B(p, q) = \int_0^1y^{p-1}(1-y)^{q-1}dy = \frac{\Gamma(p)\Gamma(q)}{\Gamma(p+q)}.
\end{equation*}
Hence,
\begin{equation*}
I_1(\mu, s) = -(-\mu)^s\frac{\Gamma(s)\Gamma(1-s)}{\Gamma(s+1-s)} = -(-\mu)^s\pi\csc(\pi s),
\end{equation*}
where we have used that $\Gamma(s)\Gamma(1- s) = \pi \csc(\pi s)$. This concludes the proof.

\end{proof}

\begin{lemma}
\label{lem:I2}
	Let $\mu < 0$ and $s\in(0,1)$. Then,
	\begin{equation}
	\label{e:I2}
	I_2(\mu, s) = \int_0^\infty \frac{e^{\mu t} - 1}{t^{1 + s}}dt = \Gamma(-s)(-\mu)^s.
	\end{equation}
\end{lemma}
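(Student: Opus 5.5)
The plan is to mirror the proof of Lemma~\ref{lem:I1}: first remove the dependence on $\mu$ by scaling, then reduce what is left to a Gamma-type integral. With the change of variable $t = x/(-\mu)$, so that $dt = dx/(-\mu)$ and $\mu t = -x$, we get
\begin{equation*}
I_2(\mu,s) = \int_0^\infty\frac{e^{-x}-1}{x^{1+s}}(-\mu)^{1+s}\frac{dx}{-\mu} = (-\mu)^s\int_0^\infty\frac{e^{-x}-1}{x^{1+s}}dx .
\end{equation*}
It therefore suffices to show that $J(s) = \int_0^\infty (e^{-x}-1)x^{-1-s}dx = \Gamma(-s)$ for $s\in(0,1)$.

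The integral $J(s)$ converges absolutely. Near $x=0$ we have $|e^{-x}-1|\le x$, so the integrand is $O(x^{-s})$, which is integrable because $s<1$. Near infinity the integrand is $O(x^{-1-s})$, which is integrable because $s>0$.

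To evaluate $J(s)$, integrate by parts with $u = e^{-x}-1$ and $dv = x^{-1-s}dx$, so that $v = -x^{-s}/s$. This gives
\begin{equation*}
J(s) = \left[-\frac{(e^{-x}-1)x^{-s}}{s}\right]_0^\infty - \frac1s\int_0^\infty e^{-x}x^{-s}dx .
\end{equation*}
The boundary term vanishes at both ends. At $0$ it behaves like $x^{1-s}\to0$, and at $\infty$ like $x^{-s}\to0$. The remaining integral equals $\Gamma(1-s)$, so $J(s) = -\Gamma(1-s)/s$.

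Finally, the functional equation $\Gamma(1-s) = (-s)\Gamma(-s)$, which is valid since $-s\notin\{0,-1,\ldots\}$, gives $J(s) = \Gamma(-s)$. Hence $I_2(\mu,s) = \Gamma(-s)(-\mu)^s$.

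The only delicate step is the integration by parts. One has to check that both boundary terms vanish, and this is exactly where the hypothesis $s\in(0,1)$ is used. If one prefers to avoid this, an equivalent route is to write $e^{-x}-1 = -\int_0^x e^{-y}dy$ and apply Fubini. This is justified because the integrand has a constant sign, and it leads directly to $-\frac1s\int_0^\infty e^{-y}y^{-s}dy$.
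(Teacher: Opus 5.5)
Your proof is correct, and it takes a different route from the paper's. The paper does not rescale. Instead it writes $t^{-1-s} = \frac{1}{\Gamma(1+s)}\int_0^\infty x^s e^{-tx}\,dx$, exchanges the order of integration, and evaluates the inner Laplace transform $\int_0^\infty e^{-xt}(e^{\mu t}-1)\,dt = \mu/(x(x-\mu))$. This gives $I_2(\mu,s) = I_1(\mu,s)/\Gamma(1+s)$. The computation is then finished by Lemma~\ref{lem:I1} (Beta function plus the reflection formula $\Gamma(s)\Gamma(1-s)=\pi\csc(\pi s)$) and by $\Gamma(1-s) = -s\Gamma(-s)$.

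Your scaling plus integration by parts is more elementary and self-contained. It needs only the Euler integral for $\Gamma(1-s)$ and the functional equation, and it depends neither on Lemma~\ref{lem:I1} nor on the reflection formula. You also check convergence and the vanishing of both boundary terms explicitly. The paper's exchange of integrals is in fact legitimate by Tonelli, since $x^s e^{-tx}(e^{\mu t}-1)$ has constant sign, but the paper leaves this unstated.

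What the paper's route buys is structural insight. The identity $I_2 = I_1/\Gamma(1+s)$ shows, for a single eigenvalue $\mu$, the Laplace-transform link between the heat-semigroup symbol $e^{\mu t}-1$ of the Bochner formula and the resolvent symbol $\mu/(x-\mu)$ of the Balakrishnan formula. Combined with $\Gamma(-s)\Gamma(1+s) = -\pi/\sin(\pi s)$, this makes transparent why both formulations produce the same discrete operator in Theorem~\ref{t:fraclap2D}.
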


\begin{proof}

Let $t > 0$ and $s > 0$, then
\begin{align*}
\int_0^\infty x^s e^{-tx}dx  & = \int_0^\infty (y/t)^s e^{-y}dy/t = \frac{1}{t^{1 + s}}\int_0^\infty y^{s+1-1} e^{-y}dy = \frac{\Gamma(1+s)}{t^{1 + s}}
	\cr
& \quad \Longleftrightarrow \frac{1}{t^{1 + s}} = \frac{1}{\Gamma(1+s)}\int_0^\infty x^s e^{-tx}dx.
\end{align*}
where we have performed the change of variable $y = tx$. Introducing the expression for $1 / t^{1 + s}$ into the definition of $I_2(\mu, s)$ in \eqref{e:I2}:
\begin{equation}
\label{e:I2b}
I_2(\mu, s) = \int_0^\infty \left[\frac{1}{\Gamma(1+s)}\int_0^\infty x^s e^{-tx}dx\right](e^{\mu t} - 1)dt
 = \frac{1}{\Gamma(1+s)}\int_0^\infty x^s \left[\int_0^\infty e^{-xt}(e^{\mu t} - 1)dt\right]dx,
\end{equation}
where we have changed the order of integration. The inner integral is absolutely convergent and can be calculated immediately:
\begin{equation*}
\int_0^\infty e^{-xt}(e^{\mu t} - 1)dt = \int_0^\infty (e^{(\mu - x)t} - e^{-xt})dt = \left.\frac{e^{(\mu - x)t}}{\mu - x} - \frac{e^{-xt}}{-x}\right|_{t = 0}^{t = \infty} = \frac{1}{x - \mu} - \frac{1}{x} = \frac{\mu}{x(x-\mu)}.
\end{equation*}
Introducing it into \eqref{e:I2b},
\begin{align*}
I_2(\mu, s) & = \frac{1}{\Gamma(1+s)}\int_0^\infty\frac{\mu x^{s-1}}{x-\mu}dx = \frac{1}{\Gamma(1+s)}I_1(\mu,s),
\end{align*}
where $I_1(\mu,s)$ is given by \eqref{e:I1}. Therefore,
\begin{align*}
I_2(\mu, s) = \frac{1}{\Gamma(1+s)}(-\pi)\csc(\pi s)(-\mu)^s = -(-\mu)^s\frac{\Gamma(s)\Gamma(1 - s)}{\Gamma(1+s)} = -(-\mu)^s\frac{\Gamma(s)(-s)\Gamma(-s)}{s\Gamma(s)} = \Gamma(-s)(-\mu)^s,
\end{align*}
where we have used that $\Gamma(s)\Gamma(1- s) = \pi \csc(\pi s)$.

\end{proof}

\section*{Data availability}

No data were used for the research described in this article.

\section*{Acknowledgments}

This work was partially supported by the research group grant IT1615-22 funded by the Basque Government, and by the projects PID2021-126813NB-I00 and PID2024-158099NB-I00 funded by MICIU/AEI/10.13039/501100011033 and by ERDF, EU.

\end{document}